\newcommand{\bk}{\Bbbk}
\newcommand{\C}{\mathbb{C}}
\newcommand{\Z}{\mathbb{Z}}
\newcommand{\bA}{\mathbb{A}}
\newcommand{\Gm}{{\mathbb{G}_{\mathrm{m}}}}
\newcommand{\fa}{\mathfrak{a}}
\newcommand{\sD}{\mathsf{D}}
\newcommand{\cA}{\mathcal{A}}
\newcommand{\cE}{\mathcal{E}}
\newcommand{\cF}{\mathcal{F}}
\newcommand{\cG}{\mathcal{G}}
\newcommand{\cL}{\mathcal{L}}
\newcommand{\sA}{\mathsf{A}}
\newcommand{\sB}{\mathsf{B}}
\newcommand{\sC}{\mathsf{C}}
\newcommand{\sE}{\mathsf{E}}
\newcommand{\sS}{\mathsf{S}}
\newcommand{\sH}{\mathsf{H}}
\newcommand{\Free}{\mathrm{Free}}
\newcommand{\ubk}{\underline{\bk}}
\newcommand{\ubkpi}{\rlap{\underline{\hphantom{$\bk/(\varpi)$}}}\bk/(\varpi){}}
\newcommand{\uubk}{\underline{\underline{\bk}}}
\newcommand{\p}{{}^p}
\newcommand{\Parity}{\mathrm{Parity}}
\newcommand{\Perv}{\mathrm{Perv}}
\newcommand{\scS}{\mathscr{S}}
\newcommand{\pt}{\mathrm{pt}}
\newcommand{\bi}{\mathbf{i}}
\newcommand{\bj}{\mathbf{j}}
\newcommand{\cU}{\mathcal{U}}
\newcommand{\cX}{\mathcal{X}}
\newcommand{\cY}{\mathcal{Y}}
\newcommand{\cZ}{\mathcal{Z}}
\newcommand{\dgmix}{\mathrm{dg}^{\mathrm{mix}}}
\newcommand{\Pargr}[1]{\Parity^\Z(#1,\bk)}
\newcommand{\PargrGm}[1]{\Parity^\Z_\Gm(#1,\bk)}
\newcommand{\DGeq}[1]{\dgmix_\Gm(#1,\bk)}
\newcommand{\DGcon}[1]{\dgmix_\sA(#1,\bk)}
\newcommand{\DGmon}[1]{\dgmix_{\sS,\Theta}(#1,\bk)}
\newcommand{\fDGcon}[1]{\overline{\mathrm{dg}}{}^{\mathrm{mix}}(#1,\bk)}
\newcommand{\D}{\mathbb{D}}
\newcommand{\pari}{{\mathrm{ord}}}
\newcommand{\op}{{\mathrm{op}}}
\newcommand{\bs}{\mathbf{s}}
\newcommand{\mon}{{\mathrm{mon}}}
\newcommand{\For}{\mathrm{For}}
\newcommand{\Mon}{\mathrm{Mon}}
\newcommand{\Coi}{\mathrm{Coi}}
\newcommand{\Inv}{\mathrm{Inv}}
\newcommand{\Nilp}{\mathrm{N}}
\newcommand{\can}{\mathrm{can}}
\newcommand{\var}{\mathrm{var}}
\newcommand{\an}{{\mathrm{an}}}
\newcommand{\bxi}{\bar\xi}
\newcommand{\sr}{\mathsf{r}}
\newcommand{\br}{\bar{\sr}}
\newcommand{\bkappa}{\boldsymbol{\kappa}}
\newcommand{\cRHom}{R\mathcal{H}\mathit{om}}
\newcommand{\cJ}{\mathcal{J}}
\newcommand{\Kb}{K^\mathrm{b}}
\newcommand{\Db}{D^\mathrm{b}}
\newcommand{\Dmix}{D^{\mathrm{mix}}}
\newcommand{\Dbc}{D^{\mathrm{b}}_{\mathrm{c}}}
\newcommand{\Ho}{\mathrm{Ho}}
\newcommand{\coh}{\mathsf{H}}
\newcommand{\fDmix}{\overline{D}{}^{\mathrm{mix}}}
\newcommand{\Hom}{\mathrm{Hom}}
\newcommand{\Ext}{\mathrm{Ext}}
\newcommand{\uHom}{\underline{\Hom}}
\newcommand{\End}{\mathrm{End}}
\newcommand{\uEnd}{\underline{\End}}
\newcommand{\id}{\mathrm{id}}
\newcommand{\la}{\langle}
\newcommand{\ra}{\rangle}
\newcommand{\simto}{\overset{\sim}{\to}}
\newcommand{\sst}{\scriptscriptstyle}
\newcommand{\gen}{\upeta}
\newcommand{\lgmod}{\text{-}\mathrm{gmod}}
\newcommand{\cone}{\mathrm{cone}}
\newcommand{\sneg}{\hspace{-1em}}
\DeclareMathOperator{\tot}{tot}
\DeclareMathOperator{\spn}{span}
\newtheorem{thm}{Theorem}[section]
\newtheorem{lem}[thm]{Lemma}
\newtheorem{prop}[thm]{Proposition}
\newtheorem{hyp}[thm]{Hypothesis}
\theoremstyle{definition}
\newtheorem{defn}[thm]{Definition}
\theoremstyle{remark}
\newtheorem{rmk}[thm]{Remark}
\newtheorem{ex}[thm]{Example}
\numberwithin{equation}{section}
\title[How to glue parity sheaves]{How to glue parity sheaves}
\author{Pramod N. Achar}
 \address{Department of Mathematics\\
   Louisiana State University\\
   Baton Rouge, LA 70803\\
   U.S.A.}
 \email{pramod@math.lsu.edu}
\thanks{The author was partially supported by NSF Grant Nos. DMS-1500890 and DMS-1802241.}
\begin{document}

\begin{abstract}
Let $\cX$ be a stratified space on which the Juteau--Mautner--William\-son theory of parity sheaves is available.  We develop a ``nearby cycles formalism'' in the framework of the homotopy category of parity sheaves on $\cX$, also known as the mixed modular derived category of $\cX$.  This construction is expected to have applications in modular geometric representation theory.
\end{abstract}

\maketitle

\section{Introduction}

\subsection{Overview}

Let $\cX$ be a stratified complex algebraic variety or stack on which the theory of parity sheaves~\cite{jmw} (say, with coefficients in $\bk$) is available.  Following~\cite{ar:mpsfv2}, one can consider the \emph{mixed modular derived category} $\Dmix(\cX,\bk)$, defined to be the bounded homotopy category of chain complexes of parity sheaves.  This category, which is a kind of replacement for the usual bounded derived category of constructible sheaves $\Dbc(\cX,\bk)$, has become a fundamental tool for recent advances in modular geometric representation theory: see~\cite{prinblock, amrw2}.

The main advantage of working with $\Dmix(\cX,\bk)$ rather than $\Dbc(\cX,\bk)$ is that the former is equipped with a notion of ``weights,'' resembling those of mixed $\ell$-adic sheaves or mixed Hodge modules (see~\cite{ar:mpsfv3}).  On the other hand, $\Dmix(\cX,\bk)$ lacks the full sheaf-theoretic machinery available in $\Dbc(\cX,\bk)$: it is difficult to carry out sheaf-theoretic operations on $\Dmix(\cX,\bk)$ unless their classical versions preserve parity sheaves.  For instance, there has so far been no theory of ``nearby cycles'' for $\Dmix(\cX,\bk)$.

The aim of this paper is to develop a nearby cycles formalism for the mixed modular derived category.  More precisely, suppose $\cX$ is equipped with an additional action of $\Gm$, and let $f: \cX \to \bA^1$ be a $\Gm$-equivariant map.  Let $\cX_0 = f^{-1}(0)$, and let $\cX_\gen = f^{-1}(\C \smallsetminus \{0\})$.  We will define a functor
\begin{equation}\label{eqn:nearby-intro}
\Psi_f: \Dmix_\Gm(\cX_\gen) \to \Dmix(\cX_0)
\end{equation}
that in many ways resembles the unipotent part of the classical nearby cycles functor.  In particular, for any $\cF \in \Dmix_\Gm(\cX_\gen,\bk)$, the nearby cycles sheaf $\Psi_f(\cF)$ will be equipped with a natural nilpotent endomorphism
\[
\Nilp: \Psi_f(\cF) \to \Psi_f(\cF)\la 2\ra
\]
that should be thought of as the ``logarithm of the monodromy.'' This construction is expected to have applications to the study of the center of the affine Hecke category: it should make it possible to adapt results of Gaitsgory~\cite{gaitsgory} to the mixed modular setting, and then to write down central objects concretely using the Elias--Williamson calculus~\cite{ew}.  For some examples, see~\cite{arider}.

\subsection{The classical unipotent nearby cycles functor}

Before discussing the ingredients in the definition of~\eqref{eqn:nearby-intro}, let us briefly review the classical situation (for sheaves in the analytic topology).  Following~\cite{ks} or~\cite{reich}, it is given by
\[
\Psi^\an_f = \text{the unipotent part of $\bi^*\bj_*\exp_{X*}\exp_X^*\cF[-1]$},
\]
where the maps $\bi$, $\bj$, and $\exp_X$ are as in the following diagram:
\begin{equation}\label{eqn:nearby-diagram}
\begin{tikzcd}
\cX_0 \ar[r, "\bi"] \ar[d] &
 \cX \ar[d, "f"'] &
 \cX_\gen \ar[l, "\bj"'] \ar[d, "f_\gen"'] &
 \tilde\cX_\gen = \cX_\gen \times_{\C^\times} \C \ar[d] \ar[l, "\exp_X"'] \\
\{0\} \ar[r] &
\C &
\C^\times \ar[l] &
\C \ar[l, "\mathrm{exp}"]
\end{tikzcd}
\end{equation}
An easy computation shows that $\Psi^\an_f(\cF)$ can also be described as the unipotent part of $\bi^*\bj_*\cRHom(f_\gen^*\exp_!\ubk_\C[1],\cF)$.  Note that $\exp_!\ubk_\C$ is the ``regular local system,'' i.e., the (infinite-rank) local system corresponding to the action of $\pi_1(\C^\times,1) \cong \Z$ on its own group algebra $\bk[\Z] = \bk[t,t^{-1}]$.

To get a more concise formula for the unipotent part, one can replace $\exp_!\ubk_\C$ above by the pro-unipotent local system $\cL_\infty$ corresponding to the $\pi_1(\C^\times,1)$-module $\bk[[t-1]]$.  With some additional work, or using the ideas of~\cite{beilinson} (see also~\cite{morel, reich}), one can also replace the $\cRHom$ by a tensor product, and arrive at the formula
\begin{equation}\label{eqn:nearby-analytic}
\Psi_f^\an(\cF) = \bi^*\bj_*(f_\gen^*\cL_\infty(1) \otimes^L \cF).
\end{equation}
(Here, $(1)$ is a Tate twist, introduced to match the conventions of~\cite{morel} and most other sources.  Note that~\cite{beilinson} omits this Tate twist.)  This formula has the advantage of avoiding the nonalgebraic map $\exp_X$, at the expense of explicitly involving the nonconstructible object $\cL_\infty$.

\subsection{Monodromy and constructibility}

The goal of this paper is to adapt~\eqref{eqn:nearby-analytic} to the mixed modular setting.  The main obstacle is that $\cL_\infty$ does not make sense: the framework of parity sheaves in~\cite{jmw} does not allow for infinite-rank stalks or for nonsemisimple local systems.

To solve this problem, we introduce a variant of $\Dmix(\cX,\bk)$ called the \emph{monodromic derived category}, and denoted by  $\Dmix_\mon(\cX,\bk)$.  This category, whose definition is closely inspired by that of ``free-monodromic objects'' in~\cite{amrw1}, \emph{does} allow both nonconstructible objects and nonsemisimple local systems. It fits into a diagram with the $\Gm$-equivariant and ordinary derived categories as shown below:
\begin{equation}\label{eqn:intro-3cat}
\begin{tikzcd}[row sep=tiny]
\Dmix_\Gm(\cX,\bk) \ar[rr, shift left, "\cJ", dashed] \ar[dr, "\For"'] &&
  \Dmix_\mon(\cX,\bk) \ar[ll, shift left, "\Coi"] \\
& \Dmix(\cX,\bk) \ar[ur, hook, "\Mon"']
\end{tikzcd}
\end{equation}
The functors labeled $\For$, $\Mon$, and $\Coi$ are called the \emph{forgetful functor}, the \emph{monodromy functor}, and the \emph{coinvariants of monodromy functor}, respectively.  Among other foundational facts, we will prove that $\Mon$ is full faithful, and that $\Coi$ is left adjoint to $\Mon \circ \For$.  The image of $\Mon$ is precisely the category of constructible objects in $\Dmix_\mon(\cX,\bk)$.

The functor $\cJ$ will be defined in Section~\ref{sec:jordan} under an additional assumption on $\cX$, called \emph{$R$-triviality}.  (In the context of~\eqref{eqn:nearby-diagram}, $\cX_\gen$ satisfies this assumption, but $\cX_0$ does not.)  This functor is equpped with a natural isomorphism
\begin{equation}\label{eqn:intro-coi-j}
\Coi(\cJ(\cF)) \cong \cF
\end{equation}
and a nonzero natural transformation (not an isomorphism)
\begin{equation}\label{eqn:intro-j-cov}
\cJ(\cF) \to \Mon(\For(\cF)).
\end{equation}
Suppose now that $\cF$ is a perverse sheaf.  Informally,~\eqref{eqn:intro-coi-j} says that $\cJ(\cF)$ is ``acyclic'' for the coinvariants of monodromy functor---in other words, the output of $\cJ(\cF)$ behaves like a projective (or pro-unipotent) object with respect to the monodromy action. Then,~\eqref{eqn:intro-j-cov} lets us identify it as the pro-unipotent cover of $\Mon(\For(\cF))$.

The key idea in this paper is that the functor $\cJ$ can serve as a replacement for the expression $f_\gen^*\cL_\infty \otimes^L({-})$ from~\eqref{eqn:nearby-analytic}.  The actual definition of the mixed modular nearby cycles functor~\eqref{eqn:nearby-intro} is
\[
\Psi_f(\cF) = \Mon^{-1}(\bi^*\bj_*\cJ(\cF))\la -2\ra.
\]
Of course, for $\Mon^{-1}$ to make sense here, we will need to prove that $\bi^*\bj_*\cJ(\cF)$ is constructible  (even though $\cJ(\cF)$ is not).  This will be a consequence of a more general constructibility theorem proved in Section~\ref{sec:rfree-constr}.  The same issue arises in the analytic case: a fundamental theorem about $\Psi^\an_f$ is that it preserves constructibility, even though~\eqref{eqn:nearby-analytic} involves the nonconstructible object $\cL_\infty$. 

\subsection{\texorpdfstring{$t$}{t}-exactness}

The second fundamental result about the classical nearby cycles functor $\Psi^\an_f$ is that it is $t$-exact for the perverse $t$-structure.  In this paper, following~\cite{reich}, we explain how to deduce the $t$-exactness of $\Psi_f$ from the assumption that $\bj_!$ and $\bj_*$ are $t$-exact (see Hypothesis~\ref{hyp:j-exact}). The inclusion map $\bj: \cX_\gen \hookrightarrow \cX$ is an affine morphism, so in the classical setting, the $t$-exactness of these functors is a theorem.  In the mixed modular case, this is assumption is related to Assumption (A2) from~\cite[\S3.2]{ar:mpsfv2}.  The latter has been checked in the important case of (Kac--Moody) flag varieties.  It seems likely that for applications in geometric representation theory, it will be possible to verify Hypothesis~\ref{hyp:j-exact} in the relevant cases.  (See~\cite{arider} for some such cases.)  Unfortunately, Hypothesis~\ref{hyp:j-exact}, and hence the $t$-exactness of $\Psi_f$, remains conjectural in general.

\subsection{Contents of the paper}

Section~\ref{sec:prelim} establishes notation and conventions for graded rings and for parity sheaves.  In Sections~\ref{sec:3derived}--\ref{sec:monodromy}, we define and establish basic properties of the categories and functors in~\eqref{eqn:intro-3cat}.

Sections~\ref{sec:recollement} and~\ref{sec:perverse} deal with the recollement formalism and the perverse $t$-structure for the various categories in~\eqref{eqn:intro-3cat}.  (In some important special cases, these results were previously obtained in~\cite{ar:mpsfv2}. See also~\cite{arv:mps} for related results.)  
This part of the paper is needed for the functors $\bi^*$ and $\bj_*$ to make sense.

In Section~\ref{sec:rfree-constr}, we define the notion of constructibility for objects in $\Dmix_\mon(\cX,\bk)$.  The main result of this section states that when $\cX$ satisfies an additional technical condition (called \emph{$R$-freeness}), every object $\Dmix_\mon(\cX,\bk)$ is constructible.

Section~\ref{sec:jordan} contains the definition and basic properties of the functor $\cJ$.  The heart of the paper is Section~\ref{sec:nearby}, which defines and proves the basic properties of the nearby cycles functor $\Psi_f$, as well as two related functors, called the maximal extension functor $\Xi_f$ and the vanishing cycles functor $\Phi_f$.  We conclude the paper in Section~\ref{sec:examples} with a few examples.

\subsection{Acknowledgments}

The ideas in this paper have been strongly influenced by conversations with Shotaro Makisumi, Simon Riche, and Laura Rider.  Thanks also to Ben Elias and Geordie Williamson for helpful remarks at an early stage of this work.

\section{Preliminaries}
\label{sec:prelim}

\subsection{Bigraded rings and modules}
\label{ss:bigraded}

Let $\bk$ be a field or a complete local principal ideal domain.  Given a bigraded $\bk$-module $M = \bigoplus_{i,j \in \Z} M^i_j$, we let $M[n]$ and $M\la k\ra$ be the bigraded modules given by
\[
(M[n])^i_j = M^{i+n}_j
\qquad\text{and}\qquad
(M\la n\ra)^i_j = M^i_{j-n}.
\]
We also define an operation $M \mapsto M\{n\}$ by $M\{n\} = M\la -n\ra[n]$.  If $m \in M^i_j$, we say that $m$ is homogeneous of bidegree $\binom{i}{j}$.  We call $i$ the \emph{cohomological degree} of $m$, and we write
\[
|m| = i.
\]
If $M$ and $N$ are bigraded $\bk$-modules, we define bigraded $\bk$-modules $M \otimes N$ and $\uHom(M,N)$ in the usual way:
\[
(M \otimes N)^i_j =
\bigoplus_{\substack{p+q=i\\ r+s=j}} M^p_r \otimes N^q_s,
\qquad
\uHom(M,N)^i_j =
\bigoplus_{\substack{q-p=i\\s-r=j}} \Hom(M^p_r, N^q_s).
\]

Let $\xi$, $\bxi$, $\sr$, and $\br$ be four indeterminates with bidgrees
\[
\textstyle
\deg \xi = \binom{2}{2}, \qquad
\deg \bxi = \binom{1}{2}, \qquad
\deg \sr = \binom{0}{-2}, \qquad
\deg \br = \binom{-1}{-2}.
\]
We define various bigraded symmetric algebras on these generators:
\begin{equation}\label{eqn:rings-defn}
\begin{gathered}
\begin{aligned}
R &= \bk[\xi] &\qquad&\qquad& R^\vee &= \bk[\sr] \\
\Lambda &= \bk[\bxi] &&& \Lambda^\vee &= \bk[\br] \\
\sA &= \bk[\xi,\bxi] = \Lambda \otimes R &&& \sA^\vee &= \bk[\sr,\br] = \Lambda^\vee \otimes R^\vee
\end{aligned}
\\
\sS = \bk[\xi,\sr] = R^\vee \otimes R.
\end{gathered}
\end{equation}
These symmetric algebras are to be understood in the graded sense, as in~\cite[\S 3.1]{amrw1}.  Since $\bxi$ and $\br$ have odd cohomological degree, the rings $\Lambda$ and $\Lambda^\vee$ are exterior algebras on one generator.  The rings $R$, $R^\vee$, and $\sS$ are polynomial rings.  The element
\[
\Theta = \sr\xi \in \sS^2_0
\]
will play an important role in the sequel.

Each of the rings defined above has a unit map, denoted by $\iota_R: \bk \to R$, $\iota_\sA: \bk \to \sA$, etc., and a counit map, denoted by $\varepsilon_R: R \to \bk$, $\varepsilon_\sA: \sA \to \bk$, etc.

Equip $\sA$ and $\sA^\vee$ with differentials $\kappa$ and $\kappa^\vee$ given by setting
\begin{align*}
\kappa(\xi) &= 0, & \kappa^\vee(\sr) &= 0, \\
\kappa(\bxi) &= \xi, & \kappa^\vee(\br) &= \sr,
\end{align*}
and then extending by the Leibniz rule.  The following fact is well known.

\begin{lem}\label{lem:sa-counit}
The maps $\varepsilon_\sA: (\sA,\kappa) \to \bk$ and $\varepsilon_{\sA^\vee}: (\sA^\vee,\kappa^\vee) \to \bk$ are quasi-isomorphisms.
\end{lem}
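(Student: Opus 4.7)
The plan is to produce an explicit contracting homotopy for $\varepsilon_\sA$, with the argument for $\varepsilon_{\sA^\vee}$ proceeding identically modulo bookkeeping.

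First I would unpack $(\sA,\kappa)$ as a bigraded $\bk$-module. Because $\bxi$ has odd cohomological degree, the graded-commutative conventions force $\bxi^2 = 0$, so $\sA \cong R \otimes_\bk \Lambda$ has $\bk$-basis $\{\xi^n,\ \xi^n\bxi : n \geq 0\}$. Extending $\kappa(\xi) = 0$ and $\kappa(\bxi) = \xi$ by the graded Leibniz rule then yields
\[
\kappa(\xi^n) = 0, \qquad \kappa(\xi^n\bxi) = \xi^{n+1};
\]
no signs appear because $\xi$ has even cohomological degree and so commutes without sign with every element of $\sA$.

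Next I would define a $\bk$-linear endomorphism $h$ of $\sA$, of cohomological degree $-1$, by $h(1) = 0$, $h(\xi^n) = \xi^{n-1}\bxi$ for $n \geq 1$, and $h(\xi^n\bxi) = 0$, and verify on basis elements that $\kappa h + h\kappa = \id_\sA - \iota_\sA \varepsilon_\sA$. This exhibits $\varepsilon_\sA$ as a chain homotopy equivalence with inverse $\iota_\sA$, which is in particular a quasi-isomorphism. For $\sA^\vee$, the same recipe with $(\xi,\bxi)$ replaced by $(\sr,\br)$ (now of cohomological degrees $0$ and $-1$) produces a contracting homotopy for $\varepsilon_{\sA^\vee}$.

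Conceptually, there is nothing substantive to overcome: $(\sA,\kappa)$ is, up to conventions, the Koszul resolution of $\bk$ as an $R$-module associated with the regular element $\xi$, and similarly for $(\sA^\vee,\kappa^\vee)$. The only care required is in the bookkeeping of bidegrees---one must check that the proposed $h$ lies in the correct bidegree $\binom{-1}{0}$, and that the Leibniz signs vanish in each calculation---but both points are immediate from the parities of the generators.
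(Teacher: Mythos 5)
Your proof is correct; the paper in fact states this lemma without proof (``The following fact is well known''), and your argument is the standard one it implicitly invokes: $(\sA,\kappa)$ is the Koszul complex of the regular element $\xi$, contracted by the explicit homotopy $h$ with $\kappa h + h\kappa = \id_\sA - \iota_\sA\varepsilon_\sA$, and likewise for $(\sA^\vee,\kappa^\vee)$. The degree and sign bookkeeping you describe checks out, since $\xi$ and $\sr$ have even cohomological degree.
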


Let $\bk\la\br,\bxi\ra$ be the free associative bigraded $\bk$-algebra on the generators $\br$ and $\bxi$, and then let
\[
\sE = \bk\la \br,\bxi\ra /(\br^2 = 0, \bxi^2 =0, \br\bxi + \bxi\br = 1).
\]
This ring contains $\Lambda^\vee$ and $\Lambda$ as subrings, and the multiplication map
\begin{equation}\label{eqn:ering-isom}
\Lambda^\vee \otimes \Lambda \to \sE
\end{equation}
is an isomorphism of $\bk$-modules (but not a ring isomorphism).  There is an isomorphism of bigraded $\bk$-algebras
\begin{equation}\label{eqn:ering-end}
\sE \simto \uEnd(\Lambda^\vee)
\end{equation}
given by letting the subring $\Lambda^\vee \subset \sE$ act on $\Lambda^\vee$ by multiplication, and letting $\Lambda \subset \sE$ act by contraction (i.e., $\bxi$ gives the endomorphism $1 \mapsto 0$, $\br \mapsto 1$ of $\Lambda^\vee$).

Finally, let
\[
\sB = R^\vee \otimes \sE \otimes R.
\]
All seven rings defined in~\eqref{eqn:rings-defn} can be regarded as subrings of $\sB$.  (Unlike those rings, however, $\sB$ is \emph{not} graded-commutative.)  Let
\[
\omega = \sr\bxi + \br\xi \in \sB^1_0.
\]
It is easy to see that
\begin{equation}\label{eqn:omega}
\omega^2 = \Theta.
\end{equation}
Equip $\sB$ with the differential $\bkappa$ given by
\[
\bkappa(b) = \omega b + (-1)^{|b|+1}b \omega.
\]

\begin{lem}\label{lem:ab-qiso}
The inclusion map $(\sA,\kappa) \to (\sB,\bkappa)$ is a quasi-isomorphism.
\end{lem}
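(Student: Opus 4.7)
The plan is to identify $(\sB,\bkappa)$, as a chain complex of bigraded $\bk$-modules, with the tensor product $(\sA^\vee,\kappa^\vee) \otimes_\bk (\sA,\kappa)$; combined with Lemma~\ref{lem:sa-counit}, this will immediately yield the result. Using the $\bk$-module decomposition $\sE \cong \Lambda^\vee \otimes \Lambda$ from~\eqref{eqn:ering-isom}, $\sB$ acquires a $\bk$-basis $\{\sr^i \br^\alpha \bxi^\beta \xi^j : i,j \geq 0,\ \alpha,\beta \in \{0,1\}\}$, so I have an evident bigraded $\bk$-module isomorphism
\[
\Phi\colon \sA^\vee \otimes_\bk \sA \simto \sB, \qquad \sr^i\br^\alpha \otimes \bxi^\beta \xi^j \mapsto \sr^i \br^\alpha \bxi^\beta \xi^j,
\]
under which the inclusion $\sA \hookrightarrow \sB$ corresponds to $\iota_{\sA^\vee} \otimes \id_\sA \colon \sA = \bk \otimes \sA \to \sA^\vee \otimes \sA$.

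The heart of the argument is to check that $\Phi$ intertwines $\bkappa$ with the tensor-product differential $d(a \otimes b) = \kappa^\vee(a) \otimes b + (-1)^{|a|} a \otimes \kappa(b)$. By $\bk$-bilinearity, the Leibniz rule, and centrality of $\sr$ and $\xi$ in $\sB$, this reduces to checking the four cases $(\alpha,\beta) \in \{0,1\}^2$. The three cases $(0,0)$, $(1,0)$, $(0,1)$ are immediate: both sides yield $0$, $\sr$, $\xi$ respectively. The essential case is $(1,1)$: since $|\br\bxi|=0$, the definition of $\bkappa$ gives $\bkappa(\br\bxi) = \omega\br\bxi - \br\bxi\omega$, which, upon expanding $\omega = \sr\bxi + \br\xi$ and applying $\bxi\br = 1 - \br\bxi$ (the only place the non-graded-commutativity of $\sE$ enters), collapses to $\sr\bxi - \br\xi$—exactly $d(\br \otimes \bxi)$.

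Granted that $\Phi$ is an isomorphism of complexes, the rest is formal. By Lemma~\ref{lem:sa-counit}, $\varepsilon_{\sA^\vee}$ is a quasi-isomorphism; since $\varepsilon_{\sA^\vee} \circ \iota_{\sA^\vee} = \id_\bk$, so is $\iota_{\sA^\vee}\colon \bk \to (\sA^\vee,\kappa^\vee)$. Because $\sA$ is free over $\bk$, tensoring with $(\sA,\kappa)$ preserves quasi-isomorphisms, so $\iota_{\sA^\vee} \otimes \id_\sA$—equivalently, the inclusion $(\sA,\kappa) \hookrightarrow (\sB,\bkappa)$—is a quasi-isomorphism.

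The only step requiring care is the $(1,1)$ case in the differential comparison; the rest is either a direct check or a formal consequence.
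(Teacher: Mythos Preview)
Your proof is correct and follows essentially the same approach as the paper: both identify $(\sB,\bkappa)$ with the tensor product complex $(\sA^\vee,\kappa^\vee)\otimes(\sA,\kappa)$ via the multiplication map and then invoke Lemma~\ref{lem:sa-counit} together with flatness over $\bk$. The only cosmetic difference is that the paper verifies the chain-map property by checking that the two subring inclusions $\sA\hookrightarrow\sB$ and $\sA^\vee\hookrightarrow\sB$ are chain maps and then appealing to the Leibniz rule for $\bkappa$, thereby avoiding your explicit $(1,1)$ computation; your direct basis check accomplishes the same thing.
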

\begin{proof}
In $\sB$, we have $\bkappa(\xi) = \omega\xi - \xi\omega = 0$, and $\bkappa(\bxi) = \br\xi\bxi + \bxi\br\xi = \xi$.  In other words, the inclusion map $\sA \to \sB$ is at least a chain map.  Similarly, the inclusion map $(\sA^\vee,\kappa^\vee) \to (\sB,\bkappa)$ is also a chain map.  Since $\bkappa$ obeys the Leibniz rule, the multiplication map
\[
\sA^\vee \otimes \sA \to \sB
\]
is again a chain map (but no longer a ring homomorphism).  In view of~\eqref{eqn:ering-isom}, this map is an isomorphism of $\bk$-modules, and hence of chain complexes.  Since $\sA$ and $\coh^\bullet(\sA)$ are both flat over $\bk$, we deduce that $\coh^\bullet(\sB)  \cong \coh^\bullet(\sA^\vee) \otimes \coh^\bullet(\sA) \cong \bk$, as desired.
\end{proof}

\subsection{Parity sheaves}
\label{ss:parity}

Let $H$ be an algebraic group over $\C$.  Suppose that $H$ admits an action of $\Gm$ by group automorphisms, so that we may form the group $\Gm \ltimes H$.

Let $X$ be a variety over $\C$ with an action of $\Gm \ltimes H$.  Assume that $X$ is equipped with an algebraic stratification $(X_s)_{s \in \scS}$ such that:
\begin{enumerate}
\item Each stratum $X_s$ is preserved by the action of $\Gm \ltimes H$.
\item The equivariant cohomology $\coh^\bullet_{\Gm \ltimes H}(X_s,\bk)$ of each stratum is concentrated in even degrees and free over $\bk$.
\item Every $\Gm \ltimes H$-equivariant local system on every stratum $X_s$ is trivial.
\end{enumerate}
Recall that $\bk$ is a field or a complete local principal ideal domain.  Under the assumptions above, it makes sense to speak of $\Gm \ltimes H$-equivariant parity sheaves on $X$, and~\cite[Theorem~2.12]{jmw} says that there is at most one indecomposable parity sheaf (up to shift and isomorphism) supported on each stratum closure.  We add one more assumption:
\begin{enumerate}
\setcounter{enumi}{3}
\item For each stratum $X_s$, there exists an indecomposable $\Gm \ltimes H$-equivariant parity sheaf $\cE_s$ supported on $\overline{X_s}$ and satisfying $\cE_s|_{X_s} \cong \ubk[\dim X_s]$.
\end{enumerate}
The action of the group $H$ is relevant for applications, so it is important to make sure our set-up keeps track of $H$-equivariance.  On the other hand, the $H$-action plays no role in the present paper.  For brevity, it is convenient to suppress $H$ from the notation.  To this end, we will use the ``stacky'' notation
\[
\cX := X/H
\qquad\text{and}\qquad
\cX_s := X_s/H\quad\text{for each $s \in \scS$.}
\]
We henceforth denote the category of $\Gm \ltimes H$-equivariant parity sheaves on $X$ by
\[
\Parity_\Gm(\cX,\bk).
\]
This category inherits a ``cohomological shift'' functor from the derived category $\Db_\Gm(\cX,\bk)$. Following~\cite{ar:mpsfv2}, we denote this functor by $\{1\}$.  For any two objects $\cF, \cG \in \Parity_\Gm(\cX,\bk)$, the graded $\bk$-module $\bigoplus_n \Hom(\cF,\cG\{n\})$ naturally has the structure of a graded module over $\coh^\bullet_\Gm(\pt,\bk)$.

We make these spaces into \emph{bigraded} modules that are ``concentrated on the diagonal'' as follows: for $\cF, \cG \in \Parity_\Gm(\cX,\bk)$, let $\uHom(\cF,\cG)$ be the bigraded $\bk$-module given by
\[
\uHom(\cF,\cG)^i_j = \begin{cases}
\Hom(\cF,\cG\{i\}) & \text{if $i = j$,} \\
0 & \text{otherwise.}
\end{cases}
\]
Next, recall that $\coh^\bullet_\Gm(\pt,\bk)$ is the symmetric algebra on $\coh^2_\Gm(\pt,\bk) = \bk \otimes_\Z X(\Gm)$, where $X(\Gm)$ is the character lattice of $\Gm$.  Identify the canonical generator of $X(\Gm)$ with the indeterminate $\xi$.  In this way, we obtain an identification
\[
R = \bk[\xi] = \coh^\bullet_\Gm(\pt,\bk).
\]
For $\cF, \cG \in \Parity_\Gm(\cX,\bk)$, the space $\uHom(\cF,\cG)$ is then a bigraded $R$-module.

Note that \emph{we have not imposed any assumptions} on the structure of $\uHom(\cF,\cG)$ as an $R$-module.  However, it will sometimes be useful to consider the following special cases.

\begin{defn}
The space $\cX$ is said to be \emph{$R$-free} if for every stratum $\cX_s$, 
the cohomology
$\coh^\bullet_\Gm(\cX_s,\bk)$ is free as an $R$-module.  By (the proof of)~\cite[Proposition~2.6]{jmw}, it follows that for any two objects $\cF, \cG \in \Parity_\Gm(\cX,\bk)$, the space $\uHom(\cF,\cG)$ is a free $R$-module.

On the other hand, $\cX$ is said to be \emph{$R$-trivial} if for any two parity sheaves $\cF, \cG \in \Parity_\Gm(\cX,\bk)$, the element $\xi \in R$ acts by $0$ on $\uHom(\cF,\cG)$.  If $\cX$ is $R$-trivial, then every locally closed union of strata in $\cX$ is also $R$-trivial.
\end{defn}

\begin{rmk}\label{rmk:rfree-constr}
If $\cX$ is $R$-free, then the $H$-equivariant cohomology (forgetting the $\Gm$-action) of a stratum is given by
\[
\coh^\bullet_H(X_s,\bk) = \coh^\bullet(\cX_s,\bk) \cong \bk \otimes_R \coh^\bullet_\Gm(\cX_s,\bk).
\]
In particular, the $H$-equivariant cohomology of each stratum is again even, so it makes sense to drop the $\Gm$-equivariance and consider the category $\Parity(\cX,\bk)$ of $H$-equivariant parity sheaves on $X$.

However, if $\cX$ is not $R$-free, the cohomology $\coh^\bullet_H(X_s,\bk)$ can fail to be even, so $\Parity(\cX,\bk)$ does not make sense in general.
\end{rmk}

\subsection{Graded parity sheaves}

We define a \emph{graded parity sheaf} to simply be a formal expression of the form
\[
\cF = \bigoplus_{i \in \Z} \cF^i[-i],
\]
where $\cF^i \in \Parity_\Gm(\cX,\bk)$, and where all but finitely many terms are zero.  (Of course, any parity sheaf can be regarded as a graded parity sheaf.) If $\cF$ and $\cG$ are graded parity sheaves, we define
\[
\uHom(\cF,\cG) = \bigoplus_{i,j \in \Z} \uHom(\cF^i,\cG^j)[i-j].
\]
The notion of a graded parity sheaf is equivalent to the notion of a ``parity sequence'' from~\cite{amrw1}.  The category of graded parity sheaves is denoted by $\PargrGm{\cX}$.  For $\cF \in \PargrGm{\cX}$, define $\cF[n]$ in the obvious way, and define $\cF\la n\ra$ by $\cF\la n\ra = \cF\{-n\}[n]$.

\section{Three derived categories}
\label{sec:3derived}

In this section, we will define the three categories in~\eqref{eqn:intro-3cat}, as well as the forgetful functor.  Each category arises as the homotopy category of a $\bk$-linear dgg (differential bigraded) category.  In all three cases, the objects are graded parity sheaves with some additional data.

\subsection{The \texorpdfstring{$\Gm$}{Gm}-equivariant derived category}
\label{ss:dmix-gm}

Let $\DGeq{\cX}$ be the dgg category defined as follows:
\begin{itemize}
\item The objects are pairs $(\cF,\delta)$, where $\cF \in \PargrGm{\cX}$, and where
\[
\delta \in \uEnd(\cF)^1_0
\qquad\text{satisfies}\qquad
\delta \circ \delta = 0.
\]
\item Given two objects $(\cF,\delta_\cF)$ and $(\cG,\delta_\cG)$ as above, the morphism space is $\uHom(\cF,\cG)$, made into a chain complex with the differential
\[
d(f) = \delta_\cG \circ f + (-1)^{|f|+1}f \circ \delta_\cF.
\]
\end{itemize}
We then set
\[
\Dmix_\Gm(\cX,\bk) = \Ho(\DGeq{\cX}),
\]
and we call this the \emph{$\Gm$-equivariant derived category} of $\cX$.  An object of $\DGeq{\cX}$ can be thought of as just a chain complex over the additive category $\Parity_\Gm(\cX,\bk)$.  In other words, $\Dmix_\Gm(\cX,\bk)$ can be identified with $\Kb\Parity_\Gm(\cX,\bk)$.

Note that any parity sheaf $\cF$ can be regarded as an object of $\Dmix_\Gm(\cX,\bk)$ by equipping it with the zero differential. 

\begin{ex}\label{ex:gm-der}
Let $\cX = \bA^1$, stratified as the union of $\cX_0 = \{0\}$ and $\cX_1 = \bA^1 \smallsetminus \{0\}$, and equipped with the standard action of $\Gm$.  Then $\cE_0 = \ubk_{\cX_0}$, and $\cE_1 = \ubk_{\overline{\cX_1}}\{1\}$.  There is a restriction map $\epsilon: \cE_1 \to \cE_0\{1\} = \cE_0\la -1\ra[1]$.  This map can be taken to be the differential of an object of $\Dmix_\Gm(\cX,\bk)$ with underlying graded parity sheaf $\cE_1 \oplus \cE_0\la -1\ra$.  We draw this object as follows:
\[
\begin{tikzcd}[column sep=large]
\cE_1 \ar[r, "{\sst[1]}" description, "\epsilon" above=2] & \cE_0\la -1\ra.
\end{tikzcd}
\]
\end{ex}

\subsection{The constructible derived category}
\label{ss:dgmix-con}

Recall the dgg ring $\sA$ and its differential $\kappa$.  Given two graded parity sheaves $\cF$ and $\cG$, we also write $\kappa$ for the differential on the $\bk$-module
\[
\sA \otimes_R \uHom(\cF,\cG)
\]
given by $\kappa(a \otimes f) = \kappa(a) \otimes f$.

Let $\DGcon{\cX}$ be the dgg category defined as follows:
\begin{itemize}
\item The objects are pairs $(\cF,\delta)$, where $\cF \in \PargrGm{\cX}$, and where
\[
\delta \in (\sA \otimes_R \uEnd(\cF))^1_0
\qquad\text{satisfies}\qquad
\delta \circ \delta = 0.
\]
\item Given two objects $(\cF,\delta_\cF)$ and $(\cG,\delta_\cG)$ as above, the morphism space is $\sA \otimes_R \uHom(\cF,\cG) \cong \Lambda \otimes \uHom(\cF,\cG)$, made into a chain complex with the differential
\[
d(f) = \delta_\cG \circ f + (-1)^{|f|+1}f \circ \delta_\cF + \kappa(f).
\]
\end{itemize}
We then set
\[
\Dmix(\cX,\bk) = \Ho(\DGcon{\cX}),
\]
and we call this the \emph{constructible derived category} of $\cX$.

Note that any parity sheaf $\cF$ can be regarded as an object of $\Dmix(\cX,\bk)$ by equipping it with the zero differential.

\begin{ex}\label{ex:con-der}
Let $\cX$, $\cE_0$, and $\cE_1$ be as in Example~\ref{ex:gm-der}. There is a map $\eta: \cE_0\{-1\} \to \cE_1$ that is Verdier dual to $\epsilon: \cE_1 \to \cE_0\{1\}$.  It can be shown that $\epsilon \circ \eta = \xi\cdot \id_{\cE_0\{-1\}}$.  There is an object of $\Dmix(\cX,\bk)$ given by
\[
\begin{tikzcd}[column sep=large]
\cE_0\la 1\ra \ar[r, "{\sst[1]}" description, "\eta" below=2]
  \ar[rr, bend left=20, "{\sst[1]}" description, "-\bxi\cdot\id" above=2] &
\cE_1 \ar[r, "{\sst[1]}" description, "\epsilon" below=2] &
\cE_0\la -1\ra.
\end{tikzcd}
\]
\end{ex}

\begin{rmk}\label{rmk:hom-nilp}
Degree considerations show that for any two graded parity sheaves $\cF$ and $\cG$, both $\uHom(\cF,\cG)^0_j$ and $(\sA \otimes_R \uHom(\cF,\cG))^0_j$ can be nonzero for only finitely many $j$.  As a consequence, if $\cF$ and $\cG$ are objects of either $\Dmix_\Gm(\cX,\bk)$ or $\Dmix(\cX,\bk)$, the direct sum
\[
\bigoplus_{n \in \Z} \Hom(\cF,\cG\la n\ra)
\]
has only finitely many nonzero terms.
\end{rmk}

\subsection{The forgetful functor}

For any $\cF, \cG \in \PargrGm{\cX}$, the inclusion map $R \hookrightarrow \sA$ induces a map of $\uHom$-spaces that we denote by
\[
\For: \uHom(\cF,\cG) \to \sA \otimes_R \uHom(\cF,\cG).
\]
If $(\cF, \delta)$ is an object of $\DGeq{\cX}$, then it is easy to see that $(\cF, \For(\delta))$ is an object of $\DGcon{\cX}$.  We therefore obtain a functor denoted by $\For: \DGeq{\cX} \to \DGcon{\cX}$.  After passing to homotopy categories, we obtain a functor
\[
\For: \Dmix_\Gm(\cX,\bk) \to \Dmix(\cX,\bk),
\]
called the \emph{forgetful functor}.

\subsection{The constructible derived category in the \texorpdfstring{$R$}{R}-free case}
\label{ss:rfree-constr}

At first glance, the definition of the constructible derived category $\Dmix(\cX,\bk)$ in~\S\ref{ss:dgmix-con} does not resemble that in~\cite{ar:mpsfv2}.  Let us explain how to compare the two.  Assume that $\cX$ is $R$-free, and recall from Remark~\ref{rmk:rfree-constr} that we may drop the $\Gm$-equivariance, and consider the category $\Parity(\cX,\bk)$ of $H$-equivariant parity sheaves.  We can also form its graded version $\Pargr{\cX}$.  Let $\fDGcon{\cX}$ be the dgg category whose objects are pairs $(\cF,\delta)$, with $\cF \in \Pargr{\cX}$, and with $\delta \in \uEnd(\cF)^1_0$ satisfying $\delta \circ \delta = 0$.  Let
\[
\fDmix(\cX,\bk) = \Ho(\fDGcon{\cX}).
\]
Then $\fDmix(\cX,\bk)$ can be identified with $\Kb\Parity(\cX,\bk)$. The category $\fDmix(\cX,\bk)$ is precisely what was called the \emph{mixed derived category} in~\cite{ar:mpsfv2}.  We will see below that $\fDmix(\cX,\bk)$ and $\Dmix(\cX,\bk)$ are equivalent.

The functor $\PargrGm{\cX} \to \Pargr{\cX}$ that forgets the $\Gm$-equivariance will be denoted by $\cF \mapsto \bar \cF$.  By~\cite[Lemma~A.11]{ar:agsr}, for $\cF,\cG \in \PargrGm{\cX}$, there is a natural isomorphism
\[
\bk \otimes_R \uHom_{\PargrGm{\cX}}(\cF,\cG) \simto \uHom_{\Pargr{\cX}}(\bar\cF, \bar\cG).
\]
The counit $\varepsilon_\sA: \sA \to \bk$ then induces a map
\begin{equation}\label{eqn:rfree-uhom}
\sA \otimes_R \uHom_{\PargrGm{\cX}}(\cF,\cG) \to \uHom_{\Pargr{\cX}}(\bar\cF,\bar\cG).
\end{equation}
Because $\uHom(\cF,\cG)$ is free (and hence flat) over $R$, Lemma~\ref{lem:sa-counit} implies that this is a quasi-isomorphism.

Now let $(\cF,\delta) \in \DGcon{\cX}$.  Let $\bar\delta \in \uEnd(\bar\cF)$ be the image of $\delta$ under~\eqref{eqn:rfree-uhom}.  This element satisfies $\bar\delta^2 = 0$, so we get a functor
\[
F: \DGcon{\cX} \to \fDGcon{\cX}.
\]
Passing to homotopy categories, we obtain a functor
\[
F: \Dmix(\cX,\bk) \to \fDmix(\cX,\bk)
\]

\begin{lem}\label{lem:dmix-classical}
Assume that $\cX$ is $R$-free.  The functor $F: \Dmix(\cX,\bk) \to \fDmix(\cX,\bk)$ is an equivalence of categories.
\end{lem}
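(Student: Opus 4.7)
The plan is to establish full faithfulness directly, then deduce essential surjectivity cheaply from triangulated generation: $\fDmix(\cX,\bk) \cong \Kb\Parity(\cX,\bk)$ is generated (as a triangulated category) by parity sheaves concentrated in degree zero, so once $F$ is known to be fully faithful and triangulated, it is automatically essentially surjective as soon as its image hits every parity sheaf.

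For full faithfulness, I would fix $(\cF,\delta_\cF), (\cG,\delta_\cG) \in \DGcon{\cX}$ and analyze the chain map $\sA \otimes_R \uHom(\cF,\cG) \to \uHom(\bar\cF,\bar\cG)$ induced by~\eqref{eqn:rfree-uhom}. The source carries the total differential $d = \kappa + D$ with $D(f) = \delta_\cG \circ f + (-1)^{|f|+1} f \circ \delta_\cF$, while the target carries only $\bar D$. The map is a chain map because $\varepsilon_\sA \circ \kappa = 0$ and the counit is multiplicative, so $D$ pushes forward correctly to $\bar D$. The text already verifies that without the $D$-perturbation this is a quasi-isomorphism, combining Lemma~\ref{lem:sa-counit} with the flatness of $\uHom(\cF,\cG)$ over $R$ (consequence of $R$-freeness). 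To accommodate $D$, I plan a standard spectral-sequence argument: view $\sA \otimes_R \uHom(\cF,\cG)$ as the total complex of a bicomplex with $\kappa$ in one direction and $D$ in the other, and filter by the cohomological degree of the $\uHom$-factor. By Remark~\ref{rmk:hom-nilp}, the filtration is finite in each bidegree, so the spectral sequences on source and target both converge; on the $E_1$-page the induced map is exactly the $\kappa$-cohomology comparison already established in the text, hence an isomorphism, and convergence then propagates the quasi-isomorphism to the total complexes.

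For essential surjectivity I invoke the triangulated generation. Axiom~(4) of~\S\ref{ss:parity} supplies a $\Gm \ltimes H$-equivariant lift of every indecomposable $H$-equivariant parity sheaf, so the forgetful functor $\Parity_\Gm(\cX,\bk) \to \Parity(\cX,\bk)$ is essentially surjective; thus for any $\bar\cE \in \Parity(\cX,\bk)$ with lift $\cE \in \Parity_\Gm(\cX,\bk)$, the object $(\cE,0) \in \DGcon{\cX}$ satisfies $F(\cE,0) \cong (\bar\cE,0)$ in $\fDmix(\cX,\bk)$. Since $F$ is a triangulated functor (being induced from a dg functor, and thus commuting with cone formation), its essential image is a full triangulated subcategory containing all parity-sheaf generators of $\fDmix(\cX,\bk)$, and hence is the whole category.

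The main obstacle is the bicomplex/spectral-sequence step in the fully-faithful part: the filtration must be chosen so that $\kappa$ and $D$ interact cleanly and the $E_1$-page exactly realizes the known $\kappa$-cohomology comparison~\eqref{eqn:rfree-uhom}. The bidegree boundedness from Remark~\ref{rmk:hom-nilp} is what guarantees convergence; once that is in hand, everything reduces to standard homological perturbation, and I do not anticipate any real surprises beyond bookkeeping.
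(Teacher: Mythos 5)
Your overall skeleton is the same as the paper's: full faithfulness from the quasi-isomorphism~\eqref{eqn:rfree-uhom}, essential surjectivity because $F$ is triangulated and $\fDmix(\cX,\bk)\cong\Kb\Parity(\cX,\bk)$ is generated by parity sheaves with zero differential, all of which are hit. The essential surjectivity half is fine. The problem is the step you yourself flagged as the main obstacle. First, $\kappa$ and $D$ do \emph{not} form a bicomplex: with $D(f)=\delta_\cG\circ f+(-1)^{|f|+1}f\circ\delta_\cF$ one computes $D^2(f)=\delta_\cG^2\circ f-f\circ\delta_\cF^2=-\kappa(\delta_\cG)\circ f+f\circ\kappa(\delta_\cF)$, which is nonzero in general (only the total $d=\kappa+D$ squares to zero), so the ``total complex of a bicomplex'' framing is not available. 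Second, and more seriously, the filtration you chose does not do what you claim. Writing $\delta=\delta_0+\bxi\delta_1$ with $\delta_0\in\uEnd(\cdot)^1_0$ and $\delta_1\in\uEnd(\cdot)^0_{-2}$, the components of $d$ change the cohomological degree of the $\uHom$-factor by $+2$ (the $\kappa$-part), $+1$ (the $\delta_0$-parts), and $0$ (the $\bxi\delta_1$-parts). So for the filtration by $\uHom$-degree the associated-graded differential is the $\delta_1$-part, \emph{not} $\kappa$, and the $E_1$-page is not the $\kappa$-cohomology comparison of~\eqref{eqn:rfree-uhom}; the key identification on which your argument rests fails. (Also, Remark~\ref{rmk:hom-nilp} is about finiteness of the bidegree-$\binom{0}{j}$ pieces and is not what controls convergence here.)

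Two ways to repair it. (a) The route the paper itself uses for Proposition~\ref{prop:mon-ff}: since $F$ is triangulated and $\Dmix(\cX,\bk)$ is generated as a triangulated category by parity sheaves with zero differential (Lemma~\ref{lem:con-gen}), a standard d\'evissage reduces the Hom-isomorphism to the case $\delta_\cF=\delta_\cG=0$, where the differentials are exactly $\kappa$ and $0$ and the already-established quasi-isomorphism~\eqref{eqn:rfree-uhom} applies verbatim; this is evidently the intended content of the paper's one-line justification. (b) If you prefer a filtration argument, you must filter by a functional for which $\kappa$ is the degree-zero part: e.g.\ by (cohomological degree of the $\uHom$-factor) plus twice the $\Lambda$-degree, concretely $F^p=\{f_0+\bxi f_1 : |f_0|\ge p,\ |f_1|\ge p-2\}$. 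Then $\kappa$ preserves the filtration index while both $\delta$-components strictly increase it, so the associated-graded differential is $\kappa$; and since the $\Lambda$-degree takes only the values $0$ and $1$, the filtration is finite in each cohomological degree, which gives the convergence you need (on the target, the corresponding filtration has vanishing associated-graded differential, and the induced map of graded pieces is~\eqref{eqn:rfree-uhom} with differentials $\kappa$ and $0$).
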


\begin{proof}
The fact that~\eqref{eqn:rfree-uhom} is a quasi-isomorphism implies that $F$ is fully faithful. The category $\fDmix(\cX,\bk) \cong \Kb\Parity(\cX,\bk)$ is generated as a triangulated category by parity sheaves (with zero differential).  Since these objects are clearly in the image of $F$, and since $F$ is a triangulated functor (see Section~\ref{sec:triangulated} for the triangulated structure on $\Dmix(\cX,\bk)$), we conclude that $F$ is essentially surjective as well.
\end{proof}

Thus, for an $R$-free variety, the notation $\Dmix(\cX,\bk)$ as used in the present paper is consistent in spirit with that of~\cite{ar:mpsfv2}.

On the other hand, if $\cX$ is not $R$-free, it cannot be studied in the framework of~\cite{ar:mpsfv2} (because $\Parity(\cX,\bk)$ does not make sense).  The definition of $\Dmix(\cX,\bk)$ in the present paper is new in that case.

\subsection{The monodromic derived category}

Recall the element $\Theta \in \sS^2_0$.  Let $\DGmon{\cX}$ be the dgg category defined as follows:
\begin{itemize}
\item The objects are pairs $(\cF,\delta)$, where $\cF \in \PargrGm{\cX}$, and where
\[
\delta \in (\sS \otimes_R \uEnd(\cF))^1_0
\qquad\text{satisfies}\qquad
\delta \circ \delta = \Theta \cdot \id_\cF.
\]
\item Given two objects $(\cF,\delta_\cF)$ and $(\cG,\delta_\cG)$ as above, the morphism space is $\sS \otimes_R \uHom(\cF,\cG) \cong R^\vee \otimes \uHom(\cF,\cG)$, made into a chain complex with the differential
\[
d(f) = \delta_\cG \circ f + (-1)^{|f|+1}f \circ \delta_\cF.
\]
\end{itemize}
(The fact that $d \circ d = 0$ follows from the fact that $f \circ \Theta\id_\cF = \Theta\id_\cG \circ f = \Theta f$.) We then set
\[
\Dmix_\mon(\cX,\bk) = \Ho(\DGmon{\cX}),
\]
and we call it the \emph{monodromic derived category}.

The ring $R^\vee$ can be regarded as a subring of $\sS \otimes_R \uEnd(\cF) \cong R^\vee \otimes \uEnd(\cF)$ for any object $\cF \in \DGmon{\cX}$.  Moreover, the definition of the differential shows that $d(\sr) = 0$, so we obtain a morphism $\sr \cdot \id_\cF: \cF \to \cF\la 2\ra$.

\begin{defn}\label{defn:monodromy}
For $\cF \in \Dmix_\mon(\cX,\bk)$, the map
\[
\Nilp_\cF = \sr\cdot \id_\cF: \cF \to \cF\la 2\ra
\]
is called the \emph{monodromy endomorphism}.
\end{defn}

It is immediate from the definitions that the monodromy endomorphism commutes with all morphisms in $\Dmix_\mon(\cX,\bk)$.  That is, for any morphism $f: \cF \to \cG$, we have $\Nilp_\cG \circ f = f\la 2\ra \circ \Nilp_\cF$.  (Of course, it is a slight misnomer to call it an ``endomorphism.'')  

It is a bit trickier to exhibit examples of objects in $\Dmix_\mon(\cX,\bk)$ than in $\Dmix_\Gm(\cX,\bk)$ or $\Dmix(\cX,\bk)$, since the zero map is not a valid differential for a graded parity sheaf $\cF$ unless $\Theta\cdot \id_\cF = 0$.  Nevertheless, given any parity sheaf $\cF$, one can construct an object of $\Dmix_\mon(\cX,\bk)$ from it as follows:
\[
\begin{tikzcd}[column sep=large]
\cF \ar[r, shift left=1.5, "{\sst[1]}" description, "\xi\cdot \id" above=2] &
\cF\la -2\ra[1] \ar[l, shift left=1.5, "{\sst[1]}" description, "\sr\cdot \id" below=2]
\end{tikzcd}
\]

\begin{ex}\label{ex:mon-der1}
In the context of Examples~\ref{ex:gm-der} and~\ref{ex:con-der}, we can write down some more complicated objects in $\Dmix_\mon(\cX,\bk)$.  Here is an object that is related to Example~\ref{ex:gm-der}:
\[
\begin{tikzcd}[column sep=large]
\cE_1 \ar[r, shift left=1.5, "{\sst[1]}" description, "\epsilon" above=2] &
\cE_0\la -1\ra \ar[l, shift left=1.5, "{\sst[1]}" description, "\sr\eta" below=2]
\end{tikzcd}
\]
Let $\cG$ denote this object. It can be shown that $\bigoplus_{n \in \Z} \Hom(\cG,\cG\la n\ra) \cong R^\vee$.  In particular, this object shows that Remark~\ref{rmk:hom-nilp} need not hold in $\Dmix_\mon(\cX,\bk)$.
\end{ex}

\begin{ex}\label{ex:mon-der2}
The following object is related to Example~\ref{ex:con-der}:
\[
\begin{tikzcd}[column sep=large, row sep=large]
\cE_0\la 1\ra \ar[r, "{\sst[1]}" description, "\eta" above=2]
  \ar[d, shift right=2, "{\sst[1]}" description, "\xi\cdot\id" left=2] &
\cE_1 \ar[r, "{\sst[1]}" description, "\epsilon" above=2]
  \ar[d, shift right=2, "{\sst[1]}"{description, pos=0.25}, "\xi\cdot\id"{pos=0.25,left=2}] &
\cE_0\la -1\ra
  \ar[d, shift right=2, "{\sst[1]}" description, "\xi\cdot\id" left=2]
\\
\cE_0\la -1\ra[1] \ar[r, "{\sst[1]}" description, "-\eta" below=2]
  \ar[u, shift right=2, "{\sst[1]}" description, "\sr\cdot\id" right=2]
  \ar[urr, "{\sst[1]}"{description, pos=0.15}, "-\id"{pos=0.15, above=2}] &
\cE_1\la -2\ra[1] \ar[r, "{\sst[1]}" description, "-\epsilon" below=2]
  \ar[u, shift right=2, "{\sst[1]}"{description, pos=0.25}, "\sr\cdot\id"{pos=0.25,right=2}] &
\cE_0\la -3\ra[1]
  \ar[u, shift right=2, "{\sst[1]}" description, "\sr\cdot\id" right=2]
\end{tikzcd}
\]
See Remark~\ref{rmk:mon-concrex} for a discussion of this object.
\end{ex}

\section{Triangulated structure}
\label{sec:triangulated}

For $\cF \in \PargrGm{\cX}$, we of course have $\uHom(\cF,\cF[1])^{-1}_0 \cong \uEnd(\cF)^0_0$.  Let
\[
s_\cF \in \uHom(\cF,\cF[1])^{-1}_0
\]
denote the element corresponding to the identity map $\id_\cF \in \uEnd(\cF)^0_0$.  In a minor abuse of notation, we also write $s$ to mean the element $1 \otimes s$ in $\sA \otimes_R \uHom(\cF,\cF[1])$ or $\sS \otimes_R \uHom(\cF,\cF[1])$.

There are natural isomorphisms
\begin{align*}
\bs &: \uHom(\cF,\cG) \simto \uHom(\cF[1],\cG[1]), \\
\bs &: \sA \otimes_R \uHom(\cF,\cG) \simto \sA \otimes_R \uHom(\cF[1],\cG[1]), \\
\bs &: \sS \otimes_R \uHom(\cF,\cG) \simto \sS \otimes_R \uHom(\cF[1],\cG[1])
\end{align*}
given by $\bs(f) = s \circ f \circ s^{-1}$.  Note that in the case of $\sA \otimes_R \uHom(\cF,\cG)$, additional signs may arise because of the ``Koszul sign rule'': for $a \in \sA$ and $f \in \uHom(\cF,\cG)$, we have
\begin{multline*}
\bs(a \otimes f) = (\id \otimes s) \circ (a \otimes f)  \circ(\id \otimes s^{-1}) \\
= (-1)^{|s||a| + |\id||s| + |\id||f|}((\id \circ a \circ \id) \otimes (s \circ f \circ s^{-1}))
= (-1)^{|a|}(a \otimes \bs(f)).
\end{multline*}
(In principle, the same phenomenon occurs in $\sS \otimes_R \uHom(\cF,\cG)$, but of course all elements of $\sS$ have even cohomological degree, so the signs are invisible.)  Concretely, if we write $f \in (\sA \otimes_R \uHom(\cF,\cG))^i_j$ as $f = f_0 + \bxi f_1$ with $f_0 \in \uHom(\cF,\cG)^i_j$ and $f_1 \in \uHom(\cF,\cG)^{i-1}_{j-2}$, then
\[
\bs(f) = \bs(f_0 + \bxi f_1) = \bs(f_0) - \bxi \bs(f_1).
\]

We are now ready to define shift functors and mapping cones in the various categories from~\S\ref{sec:3derived}.  Let $\tilde\sD$ denote one of $\DGeq{\cX}$, $\DGcon{\cX}$, or $\DGmon{\cX}$.  Define a functor $[1]: \tilde\sD \to \tilde\sD$ on objects and on morphisms by
\[
(\cF,\delta)[1] = (\cF[1], -\bs(\delta))
\qquad\text{and}\qquad
f[1] = \bs(f).
\]
(Here, $f$ is an element of $\uHom(\cF,\cG)$ or $\sA \otimes_R \uHom(\cF,\cG)$ or $\sS \otimes_R \uHom(\cF,\cG)$, as appropriate.)  

Next, let $f: (\cF,\delta_\cF) \to (\cG,\delta_\cG)$ be a \emph{chain map}, i.e., a morphism of bidegree $\binom{0}{0}$ with $d(f) = 0$.  We define the \emph{cone} of $f$ to be the object
\[
\cone(f) = \left(
\cG \oplus \cF[1],
\begin{bmatrix}
\delta_\cG & f\circ s^{-1} \\
0 & -\bs(\delta_\cF)
\end{bmatrix}
\right).
\]
Then there is a natural diagram
\begin{equation}\label{eqn:dt-proto}
(\cF,\delta_\cF) \xrightarrow{f} (\cG,\delta_\cG) \to \cone(f) \to (\cF,\delta_\cF)[1].
\end{equation}
Any diagram in $\Dmix_\Gm(\cX,\bk)$, $\Dmix(\cX,\bk)$, or $\Dmix_\mon(\cX,\bk)$ that is isomorphic to (the image of) a diagram of the form~\eqref{eqn:dt-proto} is called a \emph{distinguished triangle}.

\begin{prop}\label{prop:triangulated}
The categories $\Dmix_\Gm(\cX,\bk)$, $\Dmix(\cX,\bk)$, and $\Dmix_\mon(\cX,\bk)$ are all triangulated.
\end{prop}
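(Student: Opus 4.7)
The first category is identified in Section~\ref{ss:dmix-gm} with $\Kb\Parity_\Gm(\cX,\bk)$, the bounded homotopy category of an additive category, which is triangulated by Verdier's classical construction. For $\Dmix(\cX,\bk)$ and $\Dmix_\mon(\cX,\bk)$ the plan is to verify directly that the shift $[1]$ and cone introduced in this section give each of them the structure of a triangulated category, following the standard blueprint for the homotopy category of a pretriangulated dg(g) category.

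The first step is to check that $[1]$ and $\cone$ produce well-defined objects of the dgg categories $\DGcon{\cX}$ and $\DGmon{\cX}$: their outputs must again satisfy the Maurer--Cartan-type condition required of the differentials ($\delta \circ \delta = 0$ in $\DGcon{\cX}$, interpreted together with the intrinsic $\kappa$; and $\delta \circ \delta = \Theta \cdot \id$ in $\DGmon{\cX}$). For $[1]$, the key point is that $\bs$ is conjugation by $s$ and hence a ring homomorphism, so $\bs(\delta)^2 = \bs(\delta^2)$, and in the monodromic case $\bs$ fixes $\Theta \cdot \id_\cF$ because $\Theta$ is central of even cohomological degree. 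For $\cone(f)$, the square of the proposed matrix is computed block-by-block: the diagonal contributions are handled by the $[1]$ computation, and the off-diagonal block reduces, after clearing the $s^{-1}$-factor, to the chain-map equation $d(f) = 0$.

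Once shifts and cones are in place, TR1--TR4 follow by the classical arguments for $\Kb$ of an additive category, adapted to the dgg setting. TR1 is immediate: every morphism in the homotopy category lifts to a chain map, and its cone completes the required distinguished triangle. TR2 amounts to constructing a canonical isomorphism $\cone(\cG \to \cone(f)) \simeq \cF[1]$ in the homotopy category, which is done by writing down the explicit deformation retract built from $\id_\cG$ just as in the classical case. TR3 is handled by lifting a commutative square in the homotopy category to a square of chain maps that commutes up to an explicit chain homotopy in the dgg category, then using that homotopy as the off-diagonal term in the induced morphism of cones.

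The main obstacle will be the octahedral axiom TR4 and, more generally, the careful bookkeeping of Koszul signs and (in the monodromic case) the twisted condition $\delta \circ \delta = \Theta \cdot \id$. For TR4, the standard construction produces natural dgg-level morphisms $\cone(f) \to \cone(g \circ f) \to \cone(g)$ for composable $f$, $g$ and identifies their cone with $\cone(f)[1]$ via an explicit homotopy. What makes the argument go through uniformly in $\DGmon{\cX}$ is that $\Theta$ is central in $\sS \otimes_R \uEnd(\cF)$ and has even cohomological degree, so every Koszul sign involving $\Theta$ is trivial; every chain map, homotopy, and nullhomotopy appearing in the classical $\Kb$ proof therefore transcribes to a morphism of the correct bidegree in the dgg category, satisfying the appropriate (possibly $\Theta$-twisted) identities.
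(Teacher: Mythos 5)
Your proposal is correct and follows essentially the same route as the paper, which omits the proof precisely because it is the standard argument that the homotopy category of an additive (here dgg, possibly $\Theta$-twisted) category is triangulated, citing~\cite[Lemma~I.4.2, Proposition~I.4.4]{ks} and~\cite[Proposition~4.5.1]{amrw1}; your sketch of the well-definedness of $[1]$ and $\cone$ (Maurer--Cartan/chain-map checks, centrality and evenness of $\Theta$) plus the classical TR1--TR4 arguments is exactly the intended "minor variation."
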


The proof is a minor variation on the usual proof that the homotopy category of an additive category is triangulated, as in, say,~\cite[Lemma~I.4.2 and Proposition~I.4.4]{ks}, and will be omitted. The reader who wishes to see a few details in the case of $\Dmix(\cX,\bk)$ may consult~\cite[Proposition~4.5.1]{amrw1} for a very similar situation.

\begin{lem}\label{lem:con-gen}
The categories $\Dmix_\Gm(\cX,\bk)$ and $\Dmix(\cX,\bk)$ are both generated as triangulated categories by parity sheaves.
\end{lem}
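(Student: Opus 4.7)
The plan is a straightforward induction on the length of the underlying graded parity sheaf. For $\Dmix_\Gm(\cX,\bk)$, the identification with $\Kb\Parity_\Gm(\cX,\bk)$ noted in~\S\ref{ss:dmix-gm} reduces the claim to the standard fact that $\Kb\cA$, for any additive category $\cA$, is generated as a triangulated category by the objects of $\cA$ via iterated cones along brutal truncations of bounded chain complexes.

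For $\Dmix(\cX,\bk)$ the argument is analogous but requires slightly more care because of the twisted differential. Given $(\cF,\delta) \in \DGcon{\cX}$ with $\cF = \bigoplus_i \cF^i[-i]$, the isomorphism $\sA \otimes_R \uEnd(\cF) \cong \Lambda \otimes \uEnd(\cF)$ writes $\delta = \delta_0 + \bxi\delta_1$ with $\delta_0$ of bidegree $\binom{1}{0}$ and $\delta_1$ of bidegree $\binom{0}{-2}$. Since the bigraded $\uHom$ of parity sheaves is concentrated on the diagonal, degree considerations force the components of $\delta_0$ to send $\cF^i$ into $\cF^{i+1}$ and those of $\delta_1$ to send $\cF^i$ into $\cF^{i+2}$. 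Consequently, the ``brutal truncation'' $\cF^{\geq k} := \bigoplus_{i \geq k}\cF^i[-i]$ is stable under $\delta$, yielding a sub-object of $(\cF, \delta)$ in $\DGcon{\cX}$.

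Next, I check that the inclusion $\iota: \cF^{\geq k+1} \hookrightarrow \cF^{\geq k}$ (a chain map of bidegree $\binom{0}{0}$) has $\cone(\iota)$ homotopy equivalent to $(\cF^k[-k], 0)$: by the formula in Section~\ref{sec:triangulated}, $\cone(\iota)$ has underlying graded parity sheaf $\cF^k[-k] \oplus \cF^{\geq k+1} \oplus \cF^{\geq k+1}[1]$, whose last two summands form a contractible subcomplex (a cone on $\id_{\cF^{\geq k+1}}$) that splits off via an explicit contracting homotopy. A single-term graded parity sheaf $(\cF^k[-k], 0)$ is just a cohomological shift of the parity sheaf $\cF^k$, since the diagonal condition on $\uEnd(\cF^k)$ forces $\uEnd(\cF^k)^1_0 = \uEnd(\cF^k)^0_{-2} = 0$. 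A descending induction on $k$ (starting from $k$ large enough that $\cF^{\geq k} = 0$) then assembles $(\cF,\delta)$ from parity sheaves by iterated cones along the distinguished triangles $\cF^{\geq k+1} \to \cF^{\geq k} \to (\cF^k[-k], 0) \to$. The only technical point is the contracting homotopy identifying $\cone(\iota)$ with $(\cF^k[-k], 0)$, a routine if slightly fiddly exercise in the dgg formalism of Section~\ref{sec:triangulated} that I do not expect to cause substantive difficulty.
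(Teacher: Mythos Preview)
Your proposal is correct and uses essentially the same idea as the paper: the diagonal concentration of $\uHom$ between parity sheaves forces the components of $\delta$ to strictly increase the cohomological index, so one can peel off one term at a time by induction. The only difference is organizational. You filter by $\cF^{\ge k}$, take the cone of the inclusion $\cF^{\ge k+1}\hookrightarrow\cF^{\ge k}$, and then invoke a homotopy equivalence to identify that cone with $(\cF^k[-k],0)$. The paper instead takes $n$ maximal with $\cF^n\ne 0$, uses the same degree argument to see that the block matrix of $\delta$ with respect to $\cF=\cF^n[-n]\oplus\cF''$ has the form $\left[\begin{smallmatrix}0&f\\0&\delta''\end{smallmatrix}\right]$, and observes that this \emph{is} the cone of $f:(\cF''[-1],\delta'')\to(\cF^n[-n],0)$ on the nose. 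This bypasses your ``routine if slightly fiddly'' contracting-homotopy step entirely; you may prefer to rewrite the induction this way.
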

\begin{proof}
The case of $\Dmix_\Gm(\cX,\bk)$ is obvious, since it is equivalent to $\Kb\Parity_\Gm(\cX,\bk)$.

Let $(\cF,\delta)$ be an object of $\Dmix(\cX,\bk)$, and write $\cF = \bigoplus_{i \in \Z} \cF^i[-i]$.  Let $n$ be the largest integer such that $\cF^n \ne 0$.  Let $\cF' = \cF^n[-n]$, and let $\cF'' = \bigoplus_{i < n} \cF^i[-i]$. Thus $\cF \cong \cF' \oplus \cF''$.  With respect to this direct sum decomposition, the differential $\delta$ can be written as a matrix
\[
\delta =
\begin{bmatrix}
a & f \\ b & \delta''
\end{bmatrix}.
\]
But degree considerations show that $\uEnd_\sA(\cF')^1_0 = 0$ and $\uHom_\sA(\cF',\cF'')^1_0 = 0$.  Thus, $a = 0$ and $b = 0$.  Next, observe that
\[
\delta^2 + \kappa(\delta) = 
\begin{bmatrix}
0 & f \\ 0 & \delta''
\end{bmatrix}^2
+
\begin{bmatrix}
0 & \kappa(f) \\ 0 & \kappa(\delta'')
\end{bmatrix}
=
\begin{bmatrix}
0 & f\delta'' + \kappa(f) \\ 0 & (\delta'')^2 + \kappa(\delta'')
\end{bmatrix}
= 0.
\]
We deduce that $(\cF'',\delta'')$ is an object of $\Dmix(\cX,\bk)$ in its own right, and that $(\cF,\delta)$ is the cone of a morphism $f: \cF''[-1] \to \cF'$.

Note that $\cF'$ is a shift of a parity sheaf, and that the graded parity sheaf $\cF''$ has fewer nonzero components than $\cF$.  By induction on the number of nonzero components, we conclude that $\cF$ belongs to the subcategory generated by parity sheaves.
\end{proof}

\section{Monodromy and Verdier duality}
\label{sec:monodromy}

In this section, we define the functors $\Mon$ and $\Coi$ from~\eqref{eqn:intro-3cat}.  We also define Verdier duality for all three categories, and we discuss how Verdier duality interacts with the various functors.

\subsection{Construction of the monodromy functor}

Recall the inclusion map of dgg rings $\sA \to \sB$ from~\S\ref{ss:bigraded}.  For any two $\cF,\cG \in \PargrGm{\cX}$, this induces a map
\begin{equation}\label{eqn:moni-define}
i: \sA \otimes_R \uHom(\cF,\cG) \to \sB \otimes_R \uHom(\cF,\cG).
\end{equation}
If we equip these spaces with the differential $\kappa$ and $\bkappa$, respectively, then, because $\sA$ and $\sB$ are both flat over $R$, Lemma~\ref{lem:ab-qiso} implies that~\eqref{eqn:moni-define} is a quasi-isomorphism.

Now suppose that $(\cF,\delta_\cF)$ and $(\cG,\delta_\cG)$ are objects of $\DGcon{\cX}$.  Then one can consider $i(\delta_\cF) \in \sB \otimes_R \uEnd(\cF)$, and likewise for $\cG$.

\begin{lem}\label{lem:dgcon-model}
Let $(\cF,\delta_\cF)$ and $(\cG,\delta_\cG)$ be objects in $\DGcon{\cX}$.
\begin{enumerate}
\item The element $i(\delta_\cF) \in \sB \otimes_R \uEnd(\cF)$ satisfies $i(\delta_\cF)^2 + \bkappa(i(\delta_\cF)) = 0$.
\item Equip $\sA \otimes_R \uHom(\cF,\cG)$ and $\sB \otimes_R \uHom(\cF,\cG)$ with the differentials
\begin{align*}
d_\sA(f) &= \delta_\cG \circ f + (-1)^{|f|+1} f \circ \delta_\cF + \kappa(f), \\
d_\sB(f) &= i(\delta_\cG) \circ f + (-1)^{|f|+1} f \circ i(\delta_\cF) + \bkappa(f).
\end{align*}
Then $i: \sA \otimes_R \uHom(\cF,\cG) \to \sB \otimes_R \uHom(\cF,\cG)$ is a chain map.
\end{enumerate}
\end{lem}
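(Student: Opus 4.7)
The essential observation is that the inclusion $i\colon\sA\hookrightarrow\sB$ is a morphism of bigraded dg $\bk$-algebras. Multiplicativity is immediate from the definitions: inside $\sB$, $\bxi\in\sE$ still satisfies $\bxi^2=0$, and $\bxi$ commutes with $\xi\in R$ because $\xi$ has even cohomological degree and the two generators sit in distinct tensor factors of $\sB=R^\vee\otimes\sE\otimes R$. Compatibility with the differentials---i.e., $\bkappa\circ i=i\circ\kappa$---was already checked inside the proof of Lemma~\ref{lem:ab-qiso}, where it was shown that $\bkappa(\xi)=0$ and $\bkappa(\bxi)=\omega\bxi+\bxi\omega=\xi$.

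Tensoring $i$ over $R$ with the graded $R$-bimodules $\uEnd(\cF)$ and $\uHom(\cF,\cG)$ produces an induced map $i\colon\sA\otimes_R\uHom(\cF,\cG)\to\sB\otimes_R\uHom(\cF,\cG)$ (and analogously for $\uEnd$). Because $i$ preserves bidegrees, the Koszul signs controlling the algebra/bimodule structure on the $\sA$- and $\sB$-sides agree, so the extended $i$ remains multiplicative with respect to composition, and the differentials $\kappa$ and $\bkappa$ on the tensor products (each defined by acting through the left-hand tensor factor only) continue to be intertwined by $i$.

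With these two properties in hand, both assertions reduce to applying $i$ to identities that already hold over $\sA$. For part (2), apply $i$ term-by-term to $d_\sA(f)=\delta_\cG\circ f+(-1)^{|f|+1}f\circ\delta_\cF+\kappa(f)$ and use multiplicativity and the chain-map property to recognize the result as $d_\sB(i(f))$. For part (1), the object condition "$\delta_\cF\circ\delta_\cF=0$" in $\DGcon{\cX}$ is precisely what is needed for $d_\sA^2=0$ on morphism spaces---a routine expansion using the graded Leibniz rule for $\kappa$ shows this to be the curvature-type identity $\delta_\cF^2+\kappa(\delta_\cF)=0$ in $\sA\otimes_R\uEnd(\cF)$. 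Applying $i$ and using the same two properties yields
\[
0=i\bigl(\delta_\cF^2+\kappa(\delta_\cF)\bigr)=i(\delta_\cF)^2+\bkappa(i(\delta_\cF)),
\]
which is (1). I expect no conceptual obstacle; the only delicate point is the Koszul-sign bookkeeping when extending $i$ to the $R$-relative tensor products, needed to guarantee that "$i(\delta_\cF^2)$" genuinely equals $i(\delta_\cF)^2$ computed directly inside $\sB\otimes_R\uEnd(\cF)$.
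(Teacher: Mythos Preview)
Your proposal is correct and follows essentially the same approach as the paper's own proof. The paper records precisely your two observations---that the map~\eqref{eqn:moni-define} is compatible with composition/multiplication, and that it is a chain map with respect to $\kappa$ and $\bkappa$---and then simply states that both parts follow; your write-up just makes the deductions more explicit, including the useful remark that the object condition in $\DGcon{\cX}$ unpacks to the curvature identity $\delta_\cF^2+\kappa(\delta_\cF)=0$ (cf.\ the computations later in the paper, e.g.\ in the proof of Lemma~\ref{lem:con-gen} and before Proposition~\ref{prop:constr-monodromy}).
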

\begin{proof}
It is easy to see that the map~\eqref{eqn:moni-define} is compatible with composition in the $\uHom$ factor, and with multiplication in the $\sA$ and $\sB$ factors.  We have already observed that it is a chain map with respect to $\kappa$ and $\bkappa$.  Both parts of the lemma follow.
\end{proof}

Next, for any $\cF \in \PargrGm{\cX}$, let
\begin{equation}\label{eqn:lvf-defn}
\Lambda^\vee \otimes \cF = \cF \oplus \cF\la -2\ra [1].
\end{equation}
This object can be equipped with a canonical isomorphism
\[
\uHom({-},\Lambda^\vee \otimes \cF) \cong \Lambda^\vee \otimes \uHom({-},\cF).
\]
For any two graded parity sheaves $\cF, \cG \in \PargrGm{\cX}$, we have the following chain of isomorphisms (the first step here relies on~\eqref{eqn:ering-end}):
\begin{multline}\label{eqn:lambda-define}
\sB \otimes_R \uHom(\cF,\cG) \cong R^\vee \otimes \uEnd(\Lambda^\vee) \otimes R \otimes_R \uHom(\cF,\cG) \\
\cong R^\vee \otimes R \otimes_R \uHom(\Lambda^\vee \otimes \cF, \Lambda^\vee \otimes \cG)
= \sS \otimes_R \uHom(\Lambda^\vee \otimes \cF, \Lambda^\vee \otimes \cG).
\end{multline}
Let $\lambda: \sB \otimes_R \uHom(\cF,\cG) \simto \sS \otimes_R \uHom(\Lambda^\vee \otimes \cF, \Lambda^\vee \otimes \cG)$ be the composition of these maps.

\begin{lem}\label{lem:dgcon-mon}
Let $\cF, \cG \in \PargrGm{\cX}$, and let
\[
\hat\delta_\cF \in (\sB \otimes_R \uEnd(\cF))^1_0,
\qquad
\hat\delta_\cG \in (\sB \otimes_R \uEnd(\cG))^1_0
\]
be elements such that $\hat\delta_\cF^2 + \bkappa(\hat\delta_\cF) = 0$ and $\hat\delta_\cG^2 + \bkappa(\hat\delta_\cG) = 0$.
\begin{enumerate}
\item The element $\lambda(\hat\delta_\cF + \omega) \in \sS \otimes_R \uEnd(\cF)$ satisfies $\lambda(\hat\delta_\cF + \omega)^2 = \Theta \id_\cF$.\label{it:dgcon-theta}
\item Equip $\sB \otimes_R \uHom(\cF,\cG)$ and $\sS \otimes_R \uHom(\cF,\cG)$ with the differentials\label{it:dgcon-chainmap}
\begin{align*}
d_\sB(f) &= \hat\delta_\cG \circ f + (-1)^{|f|+1} f \circ \hat\delta_\cF + \bkappa(f), \\
d_\sS(f) &= \lambda(\delta_\cG + \omega) \circ f + (-1)^{|f|+1} f \circ \lambda(\delta_\cF + \omega).
\end{align*}
Then $\lambda: \sB \otimes_R \uHom(\cF,\cG) \simto \sS \otimes_R \uHom(\Lambda^\vee \otimes \cF, \Lambda^\vee \otimes \cG)$ is an isomorphism of chain complexes.
\end{enumerate}
\end{lem}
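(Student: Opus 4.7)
The plan is to first rewrite the differential $d_\sB$ in a more symmetric ``twisted'' form, and then to reduce both parts of the lemma to the single algebraic identity $(\hat\delta + \omega)^2 = \Theta$ in $\sB \otimes_R \uEnd(\cF)$, transported across $\lambda$. The key observation is that because $|\omega| = 1$, the Leibniz-type formula for $\bkappa$ gives $\bkappa(f) = \omega \circ f + (-1)^{|f|+1} f \circ \omega$, so
\[
d_\sB(f) = (\hat\delta_\cG + \omega) \circ f + (-1)^{|f|+1} f \circ (\hat\delta_\cF + \omega).
\]
This shifts all the nontrivial data from the differential of $\sB$ onto the elements $\hat\delta_\cF + \omega$ and $\hat\delta_\cG + \omega$.

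For part~\eqref{it:dgcon-theta}, I would just expand
\[
(\hat\delta_\cF + \omega)^2 = \hat\delta_\cF^2 + (\hat\delta_\cF \omega + \omega \hat\delta_\cF) + \omega^2.
\]
The middle term equals $\bkappa(\hat\delta_\cF)$ by the computation above (with $|\hat\delta_\cF| = 1$), so the first two terms cancel by hypothesis, while $\omega^2 = \Theta$ by~\eqref{eqn:omega}. Applying the isomorphism $\lambda$ then yields $\lambda(\hat\delta_\cF + \omega)^2 = \Theta \cdot \id_{\Lambda^\vee \otimes \cF}$, since (as I argue next) $\lambda$ is multiplicative.

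For part~\eqref{it:dgcon-chainmap}, I would verify that $\lambda$ is compatible with composition: unwinding the definition~\eqref{eqn:lambda-define}, $\lambda$ is the composite of the ring isomorphism $\sE \simto \uEnd(\Lambda^\vee)$ from~\eqref{eqn:ering-end} (tensored with the identity on $R^\vee \otimes R$) and the canonical identification $\uEnd(\Lambda^\vee) \otimes \uHom(\cF,\cG) \cong \uHom(\Lambda^\vee \otimes \cF, \Lambda^\vee \otimes \cG)$. Each step intertwines multiplication in $\sB$ (or $\sE$) with composition in the $\uHom$ factor, so $\lambda(g \circ f) = \lambda(g) \circ \lambda(f)$ up to Koszul signs that match on both sides. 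Applying $\lambda$ to the rewritten formula for $d_\sB(f)$ then immediately gives $d_\sS(\lambda(f))$, which is what we want.

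The main thing to get right is the sign bookkeeping, especially in verifying that $\lambda$ respects composition: the Koszul sign rule interacts nontrivially with how $\sE \subset \sB$ sits inside $R^\vee \otimes \sE \otimes R$ and how its action on $\Lambda^\vee$ is ``moved outside'' to become part of $\uHom(\Lambda^\vee \otimes \cF, \Lambda^\vee \otimes \cG)$. Once this is checked once and for all, both parts of the lemma follow from the same trick of absorbing $\omega$ into $\hat\delta$. No nontrivial cohomological input is needed beyond $\omega^2 = \Theta$ and the super-Leibniz identity for $\bkappa$.
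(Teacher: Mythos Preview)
Your proposal is correct and matches the paper's proof essentially line for line: the paper proves part~\eqref{it:dgcon-theta} by the same expansion $(\hat\delta_\cF + \omega)^2 = \hat\delta_\cF^2 + \bkappa(\hat\delta_\cF) + \Theta = \Theta$, and reduces part~\eqref{it:dgcon-chainmap} to the identity $\bkappa(f) = \omega \circ f + (-1)^{|f|+1} f \circ \omega$ together with the observation that $\lambda$ is compatible with composition. Your additional remarks on why $\lambda$ respects composition and on the sign bookkeeping are reasonable elaborations, but no new idea is involved.
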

\begin{proof}
\eqref{it:dgcon-theta}~We have
\[
(\hat\delta_\cF + \omega)^2 = \hat\delta_\cF^2 + \omega\hat\delta_\cF + \hat\delta_\cF\omega + \omega^2
= \hat\delta_\cF^2 + \bkappa(\hat\delta_\cF) + \Theta = \Theta.
\]
\eqref{it:dgcon-chainmap}~This follows from the formula $\bkappa(f) = \omega \circ f + (-1)^{|f|+1}f \circ \omega$ and the observation that $\lambda$ is compatible with composition.
\end{proof}

We are now ready to define a functor
\[
\Mon: \DGcon{\cX} \to \DGmon{\cX}.
\]
On objects, it is given by $\Mon(\cF,\delta) = (\Lambda^\vee \otimes \cF, \lambda(i(\delta) + \omega))$.   On morphism spaces, it is given by
\begin{equation}\label{eqn:mon-mor-define}
\Mon = \lambda \circ i: \sA \otimes_R \uHom(\cF,\cG) \to \sS \otimes_R \uHom(\cF,\cG).
\end{equation}
Lemmas~\ref{lem:dgcon-model} and~\ref{lem:dgcon-mon} tell us that $\Mon(\cF,\delta)$ is a well-defined object of $\DGmon{\cX}$, and that the map~\eqref{eqn:mon-mor-define} is indeed a chain map.  After passing to homotopy categories, we obtain a functor
\[
\Mon: \Dmix(\cX,\bk) \to \Dmix_\mon(\cX,\bk).
\]

Let us try to make the description of $\Mon$ more concrete. As part of the map $\lambda \circ i$ from~\eqref{eqn:moni-define} and~\eqref{eqn:lambda-define}, we have an inclusion map
\begin{equation}\label{eqn:mon-incl}
\jmath: \uHom(\cF,\cG) \to \uHom(\Lambda^\vee \otimes \cF, \Lambda^\vee \otimes \cG)
\qquad\text{given by}\qquad
\jmath(f) = \id_{\Lambda^\vee} \otimes f.
\end{equation}
However, with respect to the decomposition~\eqref{eqn:lvf-defn} (and its analogue for $\cG$), there are signs involved: we claim that
\[
\jmath(f) = \begin{bmatrix} f & \\ & (-1)^{|f|}f \end{bmatrix}.
\]
To see this, consider the unit and counit maps $\iota_{\Lambda^\vee}: \bk \to \Lambda^\vee$ and $\varepsilon_{\Lambda^\vee}: \Lambda^\vee \to \bk$.  The upper-left entry of $\jmath(f)$ is the element of $\uEnd(\cF)$ given by
\[
(\varepsilon_{\Lambda^\vee} \otimes \id_\cG)(\id_{\Lambda^\vee} \otimes f)(\iota_{\Lambda^\vee} \otimes \id_\cF) = \id_{\bk} \otimes f,
\]
while the lower-right entry is given by
\[
(\varepsilon_{\Lambda^\vee}\bxi \otimes \id_\cG)(\id_{\Lambda^\vee} \otimes f)(\br\iota_{\Lambda^\vee} \otimes \id_\cF) = (-1)^{|\br||f|} (\id_{\bk} \otimes f) = (-1)^{|f|} f.
\]
With respect to the decomposition~\eqref{eqn:lvf-defn}, the maps $\bxi$ and $\br$ can be written as $\bxi = \left[\begin{smallmatrix} 0 & \id \\ 0 & 0 \end{smallmatrix}\right]$ and  $\br = \left[\begin{smallmatrix} 0 & 0 \\ \id & 0 \end{smallmatrix}\right]$, respectively, so $\omega = \left[\begin{smallmatrix}  & \sr \\ \xi &  \end{smallmatrix}\right]$.

Now let $(\cF,\delta)$ be an object of $\DGcon{\cX}$.  Write $\delta$ as $\delta_0 + \bxi \delta_1$, where $\delta_0 \in \uEnd(\cF)^1_0$, and $\delta_1 \in \uEnd(\cF)^0_{-2}$.  Then
\[
\lambda(i(\delta) + \omega) =
\begin{bmatrix}
\delta_0 & \\ & -\delta_0
\end{bmatrix}
+
\begin{bmatrix}
0 & \id \\ 0 & 0
\end{bmatrix}
\begin{bmatrix}
\delta_1 & \\ & \delta_1
\end{bmatrix}
+
\begin{bmatrix}
& \sr \\ \xi &  
\end{bmatrix}.
\]
Thus $\Mon(\cF,\delta)$ can be written as
\begin{multline}\label{eqn:mon-concrete}
\Mon(\cF, \delta_0 + \bxi \delta_1) = \\
\left( \cF \oplus \cF\la -2\ra[1],
\begin{bmatrix}
\delta_0  & \delta_1 + \sr \\
\xi & -\delta_0
\end{bmatrix}
\right) 
\qquad\text{or}\qquad
\begin{tikzcd}[ampersand replacement=\&]
\cF \ar[loop above, out=120, in=60, distance=20, "{\sst[1]}"{description, pos=0.18}, "\delta_0"]
  \ar[r, shift left=1.5, "{\sst[1]}" description, "\xi" above=2] \&
\cF\la -2\ra[1] \ar[loop above, out=120, in=60, distance=20, "{\sst[1]}"{description, pos=0.18}, "-\delta_0"]
  \ar[l, shift left=1.5, "{\sst[1]}" description, "\delta_1 + \sr" below=2].
\end{tikzcd}
\end{multline}
If we write $f \in (\sA \otimes_R \uHom(\cF,\cG))^i_j$ as $f_0 + \bxi f_1$ with $f_0 \in \uHom(\cF,\cG)^i_j$ and $f_1 \in \uHom(\cF,\cG)^{i-1}_{j-2}$, then 
\[
\Mon(f) = \begin{bmatrix}
f_0 & (-1)^{|f|+1} f_1 \\ & (-1)^{|f|}f_0
\end{bmatrix}.
\]

\begin{rmk}\label{rmk:mon-concrex}
For an explicit example, applying $\Mon$ to the object from Example~\ref{ex:con-der} yields the object from Example~\ref{ex:mon-der2}.
\end{rmk}

\begin{lem}\label{lem:mon-tri}
The functor $\Mon: \Dmix(\cX,\bk) \to \Dmix_\mon(\cX,\bk)$ is triangulated.
\end{lem}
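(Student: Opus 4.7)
The plan is to show that $\Mon$ lifts to an additive dg-functor between the underlying dgg categories, and then invoke the general principle that such a dg-functor descends to a triangulated functor on homotopy categories. Concretely, I would check three things: (a) $\Mon$ is a dg-functor $\DGcon{\cX} \to \DGmon{\cX}$; (b) there is a canonical natural isomorphism $\Mon((\cF,\delta)[1]) \cong \Mon(\cF,\delta)[1]$; and (c) for any chain map $f$, there is a canonical isomorphism $\Mon(\cone(f)) \cong \cone(\Mon(f))$ compatible with the natural maps of~\eqref{eqn:dt-proto}.

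For (a), the work is essentially already done: Lemma~\ref{lem:dgcon-model}(1) and Lemma~\ref{lem:dgcon-mon}(1) show that $\Mon$ sends objects of $\DGcon{\cX}$ to objects of $\DGmon{\cX}$, and the chain-map property of $\Mon = \lambda \circ i$ on morphism spaces is exactly the combination of Lemma~\ref{lem:dgcon-model}(2) and Lemma~\ref{lem:dgcon-mon}(2). Additivity is clear from the construction, since both $i$ and $\lambda$ (which is built from $\sB \cong R^\vee \otimes \uEnd(\Lambda^\vee) \otimes R$ and $\uEnd(\Lambda^\vee) \otimes \uHom(\cF,\cG) \cong \uHom(\Lambda^\vee \otimes \cF, \Lambda^\vee \otimes \cG)$) are compatible with finite direct sums.

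For (b), the underlying graded parity sheaf on both sides is $\Lambda^\vee \otimes \cF[1]$. One needs the identity $\lambda\bigl(i(-\bs(\delta)) + \omega\bigr) = -\bs\bigl(\lambda(i(\delta)+\omega)\bigr)$ in $\sS \otimes_R \uEnd(\Lambda^\vee \otimes \cF[1])$. Since $i$ is a subring inclusion it commutes with $\bs$ up to the Koszul rule already present in $\sA$, and the same Koszul rule applied to $\omega \in \sB^1_0$ gives $\bs(\omega) = -\omega$; naturality of $\lambda$ in $\cF$ then yields the claimed identity. For (c), the underlying graded parity sheaf in both cases is $\Lambda^\vee \otimes (\cG \oplus \cF[1])$, and the block-matrix differential translates correctly under $\lambda \circ i$: the diagonal entries become $\Mon(\delta_\cG)$ and $-\bs(\Mon(\delta_\cF))$ (using (b)), the off-diagonal entry becomes $\Mon(f) \circ s^{-1}$ (using that $\lambda \circ i$ is compatible with composition), and the $\omega$-correction distributes diagonally. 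These same isomorphisms are compatible with the natural maps $\cG \to \cone(f)$ and $\cone(f) \to \cF[1]$.

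With (a)--(c) in hand, any distinguished triangle in $\Dmix(\cX,\bk)$ is the image of some $(\cF,\delta_\cF) \to (\cG,\delta_\cG) \to \cone(f) \to (\cF,\delta_\cF)[1]$ at the dgg level; applying $\Mon$ and using the natural isomorphisms from (b) and (c) produces a diagram of the form~\eqref{eqn:dt-proto} in $\DGmon{\cX}$, hence a distinguished triangle in $\Dmix_\mon(\cX,\bk)$. The main technical obstacle is the sign bookkeeping in step (b): one must carefully track how the Koszul sign introduced by $\bs$ on $\sA$-coefficients, the sign $\bs(\omega) = -\omega$, and the signs implicit in the isomorphism $\sB \cong \sS \otimes_R \uEnd(\Lambda^\vee)$ recombine through $\lambda$. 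Once this is pinned down (and it is already reflected in the explicit block-matrix formula~\eqref{eqn:mon-concrete}), the remaining checks are routine direct-sum manipulations, completely parallel to the standard proof that the homotopy category of an additive category is triangulated.
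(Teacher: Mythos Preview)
Your approach matches the paper's: both verify shift and cone compatibility directly, the paper via explicit block-matrix computations using~\eqref{eqn:mon-concrete}, you via a more conceptual sign-tracking argument.

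One imprecision worth flagging in (b): the identity you write, $\lambda(i(-\bs(\delta))+\omega) = -\bs(\lambda(i(\delta)+\omega))$, does not hold literally.  The paper computes the two differentials and finds they differ in the off-diagonal entries by a sign.  What is true is that the two differentials are intertwined by the Koszul sign isomorphism $\Lambda^\vee \otimes (\cF[1]) \simto (\Lambda^\vee \otimes \cF)[1]$; in the decomposition~\eqref{eqn:lvf-defn} this isomorphism is the matrix $\left[\begin{smallmatrix}\id & \\ & -\id\end{smallmatrix}\right]$ (since $\br$ has odd degree), and that is exactly the chain isomorphism~\eqref{eqn:monshift-map} the paper writes down.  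So the ``abstract'' argument does not save you from producing this explicit isomorphism --- it merely explains where it comes from.  Once (b) is corrected this way, your (c) goes through and recovers the paper's permutation-with-sign isomorphism~\eqref{eqn:moncone-map} for cones.
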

\begin{proof}
Using the explicit formula for $\Mon$ given above along with the descriptions of the shift functor $[1]$ from Section~\ref{sec:triangulated}, we have
\begin{align*}
\Mon((\cF,\delta_\cF)[1]) &= \left(\cF[1] \oplus \cF\la -2\ra[2], 
\left[\begin{smallmatrix}
-\delta_{\cF,0}  & \delta_{\cF,1} + \sr \\
\xi & \delta_{\cF,0}
\end{smallmatrix}\right]
\right),
\\
\Mon(\cF,\delta_\cF)[1] &= \left(\cF[1] \oplus \cF\la -2\ra[2], 
\left[\begin{smallmatrix}
-\delta_{\cF,0}  & -\delta_{\cF,1} - \sr \\
-\xi & \delta_{\cF,0}
\end{smallmatrix}\right]
\right),
\end{align*}
where $\delta_\cF = \delta_{\cF,0} + \bxi \delta_{\cF,1}$.  Then the map
\begin{equation}\label{eqn:monshift-map}
\left[\begin{smallmatrix}
\id & \\ & -\id
\end{smallmatrix}\right]: \cF[1] \oplus \cF\la-2\ra[2] \to \cF[1] \oplus \cF\la -2\ra[2]
\end{equation}
is clearly a chain isomorphism $\Mon((\cF,\delta_\cF)[1]) \to \Mon(\cF,\delta_\cF)[1]$.

Next, let $(\cG,\delta_\cG)$ be another object of $\Dmix(\cX,\bk)$, and write $\delta_\cG = \delta_{\cG,0} + \bxi \delta_{\cG,1}$. Let $f: (\cF,\delta_\cF) \to (\cG,\delta_\cG)$ be a chain map, and write $f = f_0 + \bxi f_1$.  Recall that the cone of $f$ is given by
\[
\cone(f) = \left( \cG \oplus \cF[1], 
\left[\begin{smallmatrix}
\delta_{\cG,0} & f_0 \\ & -\delta_{\cF_0}
\end{smallmatrix}\right]
+ \bxi
\left[\begin{smallmatrix}
\delta_{\cG,1} & f_1 \\ & \delta_{\cF_1}
\end{smallmatrix}\right]
\right)
\]
We have
\[
\Mon(\cone(f)) = \left(
\cG \oplus \cF[1] \oplus \cG\la -2\ra[1] \oplus \cF \la -2\ra[2],
\left[\begin{smallmatrix}
\delta_{\cG,0} & f_0 & \delta_{\cG,1} + \sr & f_1 \\
 & -\delta_{\cF,0} & & \delta_{\cF,1} + \sr \\
\xi & & -\delta_{\cG,0} & -f_0 \\
& \xi & & \delta_{\cF,0}
\end{smallmatrix}\right]
\right)
\]
and
\[
\cone(\Mon(f)) =
\left(
\cG \oplus \cG\la -2\ra[1] \oplus \cF[1] \oplus \cF\la -2\ra[2],
\left[\begin{smallmatrix}
\delta_{\cG,0} & \delta_{\cG,1} + \sr & f_0 & -f_1 \\
\xi & -\delta_{\cG,0} & & f_0 \\
& & -\delta_{\cF,0} & -\delta_{\cF,1} - \sr \\
& & -\xi & \delta_{\cF,0}
\end{smallmatrix}\right]
\right).
\]
Then it is easily checked that the map
\begin{equation}\label{eqn:moncone-map}
\left[\begin{smallmatrix}
\id & & & \\ & & \id & \\ & \id & & \\ & & & -\id
\end{smallmatrix}
\right]
:
\begin{aligned}
\cG \oplus \cF[1] \oplus \cG\la -2\ra[1] &\oplus \cF \la -2\ra[2] \to \\
\cG &\oplus \cG\la -2\ra[1] \oplus \cF[1] \oplus \cF\la -2\ra[2]
\end{aligned}
\end{equation}
provides a natural isomorphism $\Mon(\cone(f)) \simto \cone(\Mon(f))$, and moreover that the two natural transformations we have defined give rise to a commutative diagram
\[
\begin{tikzcd}
\Mon(\cF,\delta_\cF) \ar[r, "\Mon(f)"] \ar[d,equal] &
  \Mon(\cG,\delta_\cG) \ar[r] \ar[d,equal] &
  \Mon(\cone(f)) \ar[d, "\text{\eqref{eqn:moncone-map}}"] \ar[r] &
  \Mon(\cF[1]) \ar[d, "\text{\eqref{eqn:monshift-map}}"] \\
\Mon(\cF,\delta_\cF) \ar[r, "\Mon(f)"] &
  \Mon(\cG,\delta_\cG) \ar[r] &
  \cone(\Mon(f)) \ar[r] &
  \Mon(\cF)[1]
\end{tikzcd}
\]
Thus, $\Mon$ is triangulated.
\end{proof}

\begin{prop}\label{prop:mon-ff}
The functor $\Mon: \Dmix(\cX,\bk) \to \Dmix_\mon(\cX,\bk)$ is fully faithful.
\end{prop}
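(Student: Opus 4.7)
The plan is to reduce, via the triangulated structure, to a quasi-isomorphism statement about morphism complexes for parity sheaves equipped with the zero differential, which then follows from Lemma~\ref{lem:ab-qiso}.

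Since $\Mon$ is triangulated (Lemma~\ref{lem:mon-tri}) and $\Dmix(\cX,\bk)$ is generated as a triangulated category by parity sheaves (Lemma~\ref{lem:con-gen}), a standard five-lemma argument reduces the claim to the following base case: for all parity sheaves $\cF, \cG$ (viewed as objects of $\Dmix(\cX,\bk)$ with zero differential) and all $n, m \in \Z$, the map
\[
\Mon: \Hom_{\Dmix(\cX,\bk)}(\cF, \cG[n]\la m\ra) \to \Hom_{\Dmix_\mon(\cX,\bk)}(\Mon\cF, \Mon\cG[n]\la m\ra)
\]
is an isomorphism. Explicitly, for fixed such $\cG$, the class of $X$ for which the analogous property holds (for all $n, m$, with $\cF$ replaced by $X$) is a triangulated subcategory of $\Dmix(\cX,\bk)$ containing all parity sheaves, and hence equals $\Dmix(\cX,\bk)$; iterating with $\cG$ now allowed to be arbitrary finishes the reduction.

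For the base case, the $\DGcon{\cX}$-morphism complex is $(\sA \otimes_R \uHom(\cF,\cG), \kappa)$, while by Lemma~\ref{lem:dgcon-mon}(2) the $\DGmon{\cX}$-morphism complex between $\Mon(\cF)$ and $\Mon(\cG)$ is isomorphic via $\lambda$ to $(\sB \otimes_R \uHom(\cF,\cG), \bkappa)$; under these identifications $\Mon$ corresponds precisely to the inclusion $i$ of~\eqref{eqn:moni-define}. By Lemma~\ref{lem:ab-qiso}, $\sA \hookrightarrow \sB$ is a quasi-isomorphism, and since $\sA$ and $\sB$ are both free (hence flat) resolutions of $\bk$ over $R$, applying $- \otimes_R \uHom(\cF,\cG)$ preserves this quasi-isomorphism (both sides compute $\mathrm{Tor}_R^\bullet(\bk,\uHom(\cF,\cG))$). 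Taking $H^0$ in the relevant bidegrees yields the required isomorphism on Hom-spaces. Beyond careful bookkeeping in the reduction step, no serious obstacle arises.
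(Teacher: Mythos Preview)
Your proposal is correct and follows essentially the same approach as the paper's proof: reduce via Lemma~\ref{lem:mon-tri} and Lemma~\ref{lem:con-gen} to parity sheaves with zero differential, then identify the morphism complexes with $(\sA \otimes_R \uHom(\cF,\cG),\kappa)$ and (via $\lambda$) with $(\sB \otimes_R \uHom(\cF,\cG),\bkappa)$, and conclude by the quasi-isomorphism~\eqref{eqn:moni-define} coming from Lemma~\ref{lem:ab-qiso} and flatness of $\sA,\sB$ over $R$. Your Tor reformulation is just a rephrasing of the paper's flatness argument.
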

\begin{proof}
Since $\Mon$ is triangulated, it is enough to check that
\begin{equation}\label{eqn:mon-ff}
\Hom((\cF,\delta_\cF),(\cG,\delta_\cG)) \to \Hom(\Mon(\cF,\delta_\cF), \Mon(\cG,\delta_\cG))
\end{equation}
is an isomorphism when $(\cF,\delta_\cF)$ and $(\cG,\delta_\cG)$ belong to some class of objects that generates $\Dmix(\cX,\bk)$.  For instance, by Lemma~\ref{lem:con-gen}, we may assume that they are both shifts of parity sheaves with zero differential.  Since $\delta_\cF = 0$ and $\delta_\cG = 0$, the differentials $d_\sA$ and $d_\sB$ from Lemma~\ref{lem:dgcon-model} reduce to $\kappa$ and $\bkappa$, respectively. In this special case, we have seen that the map~\eqref{eqn:moni-define} is a quasi-isomorphism.  Since $\lambda$ is an isomorphism, the map~\eqref{eqn:mon-mor-define} is a quasi-isomorphism as well, so~\eqref{eqn:mon-ff} is an isomorphism.
\end{proof}

\begin{rmk}\label{rmk:mon-nilp}
An immediate and striking consequence is that Definition~\ref{defn:monodromy} can be transferred to the constructible category: for any $\cF \in \Dmix(\cX,\bk)$, we have a canonical map
\[
\Nilp_\cF: \cF \to \cF\la 2\ra,
\]
that commutes with all morphisms in $\Dmix(\cX,\bk)$.  By Remark~\ref{rmk:hom-nilp}, this map is nilpotent: $\Nilp_\cF^k = 0$ for $k \gg 0$.  (In contrast, Example~\ref{ex:mon-der1} shows that in $\Dmix_\mon(\cX,\bk)$, $\Nilp_\cF$ need not be nilpotent.)
\end{rmk}

The following statement describes the monodromy endomorphism in terms intrinsic to $\Dmix(\cX,\bk)$.  It resembles (up to a sign) the ``monodromy action'' discussed in~\cite[\S4.7]{amrw1}.

\begin{prop}\label{prop:constr-monodromy}
Let $(\cF,\delta) \in \Dmix(\cX,\bk)$, and write its differential as $\delta = \delta_0 + \bxi \delta_1$, where $\delta_0 \in \uEnd(\cF)^1_0$, and $\delta_1 \in \uEnd(\cF)^0_{-2}$.  Then the monodromy endomorphism is given by $\Nilp_\cF = -\delta_1$.
\end{prop}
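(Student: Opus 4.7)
The plan is to exploit the full faithfulness of $\Mon$ (Proposition~\ref{prop:mon-ff}) to transfer the claim to a direct calculation in $\Dmix_\mon(\cX,\bk)$.  By the definition of $\Nilp_\cF$ for constructible objects (Remark~\ref{rmk:mon-nilp}), it suffices to show (i)~that $-\delta_1$, viewed as an element of $\uHom(\cF,\cF\la 2\ra)^0_0$, is a chain map in $\DGcon{\cX}$, and (ii)~that $\Mon(-\delta_1)$ is chain-homotopic to $\Nilp_{\Mon(\cF,\delta)} = \sr\cdot\id$ in $\DGmon{\cX}$.

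For (i), I would unpack the Maurer--Cartan relation $\delta\circ\delta + \kappa(\delta) = 0$ satisfied by the differential of $(\cF,\delta)$ (as used implicitly in the proof of Lemma~\ref{lem:con-gen}).  Expanding with Koszul signs in $\sA\otimes_R\uEnd(\cF)$ gives $\delta\circ\delta = \delta_0^2 + \bxi\,[\delta_1,\delta_0]$ and $\kappa(\delta) = \xi\delta_1$, so separating the $1$- and $\bxi$-components under the decomposition $\sA\cong\Lambda\otimes R$ yields
\[
\delta_0^2 + \xi\delta_1 = 0 \qquad\text{and}\qquad [\delta_0,\delta_1] = 0.
\]
The second identity gives $d(-\delta_1) = [\delta_1,\delta] = [\delta_1,\delta_0] + [\delta_1,\bxi\delta_1] = 0$, where the last commutator vanishes because $\bxi^2 = 0$ and $\delta_1$ (of even cohomological degree) commutes with $\bxi$.

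For (ii), the explicit formula~\eqref{eqn:mon-concrete} gives $\Mon(-\delta_1) = \left[\begin{smallmatrix}-\delta_1 & 0 \\ 0 & -\delta_1\end{smallmatrix}\right]$, while $\Nilp_{\Mon(\cF,\delta)} = \left[\begin{smallmatrix}\sr & 0 \\ 0 & \sr\end{smallmatrix}\right]$.  I would propose the explicit homotopy $h = \left[\begin{smallmatrix}0 & 0 \\ \id_\cF & 0\end{smallmatrix}\right]$, of bidegree $\binom{-1}{-2}$, where the nonzero entry uses the canonical identification $\uHom(\cF,\cF\la-2\ra[1])^{-1}_{-2}\cong \uEnd(\cF)^0_0$.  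A direct block-matrix computation against the differential $M = \left[\begin{smallmatrix}\delta_0 & \delta_1+\sr \\ \xi & -\delta_0\end{smallmatrix}\right]$ yields
\[
d(h) \;=\; Mh + hM \;=\; \begin{bmatrix}\delta_1+\sr & 0 \\ 0 & \delta_1+\sr\end{bmatrix} \;=\; \Nilp_{\Mon(\cF,\delta)} - \Mon(-\delta_1),
\]
so the two morphisms coincide in $\Dmix_\mon(\cX,\bk)$, and hence in $\Dmix(\cX,\bk)$ by full faithfulness of $\Mon$.

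The main obstacle is the sign bookkeeping in part (i): one must track Koszul signs consistently when multiplying in $\sA\otimes_R\uEnd(\cF)$, carefully distinguishing the contribution of $\bxi$ (cohomological degree $1$) from that of $\delta_0$ (also cohomological degree $1$).  Once the two Maurer--Cartan identities are extracted, the rest of the argument is a routine matrix calculation, simplified by the fact that $\sS$ is concentrated in even cohomological degree so no Koszul signs arise on the $\sS$-side of $\sS\otimes_R\uEnd$.
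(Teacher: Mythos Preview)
Your proof is correct and follows essentially the same approach as the paper: both reduce to showing $\Mon(-\delta_1)$ is homotopic to $\Nilp_{\Mon(\cF)}$ via the same explicit homotopy $h = \left[\begin{smallmatrix}0 & 0 \\ \id & 0\end{smallmatrix}\right]$, and the matrix computation is identical. The only difference is that you spell out the verification that $\delta_1$ is a chain map by extracting the two components of $\delta^2 + \kappa(\delta) = 0$, whereas the paper simply remarks that this ``follows easily'' from that equation.
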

This proposition implicitly asserts that $\delta_1$ is a chain map, i.e., that $\delta\delta_1 - \delta_1\delta + \kappa(\delta_1) = 0$.  This follows easily from the fact that $\delta^2 + \kappa(\delta) = 0$.
\begin{proof}
It is enough to show that $\Mon(-\delta_1) = \Nilp_{\Mon(\cF)}$ in $\Dmix_\mon(\cX,\bk)$.  Let $h \in (\sS \otimes_R \uEnd(\cF \oplus \cF\la -2\ra[1]))^{-1}_{-2}$ be the map given by $h =
[\begin{smallmatrix}
0 & 0 \\
\id & 0
\end{smallmatrix}]$.
Then
\begin{multline*}
d(h)
=
\begin{bmatrix}
\delta_0  & \delta_1 + \sr \\
\xi & -\delta_0
\end{bmatrix}
\begin{bmatrix}
0 & 0 \\
\id & 0
\end{bmatrix}
+
\begin{bmatrix}
0 & 0 \\
\id & 0
\end{bmatrix}
\begin{bmatrix}
\delta_0  & \delta_1 + \sr \\
\xi & -\delta_0
\end{bmatrix} 
=
\begin{bmatrix}
\delta_1+\sr & \\
& \delta_1 + \sr
\end{bmatrix} \\
= \Mon(\delta_1) + \sr\cdot\id 
= \Mon(\delta_1) + \Nilp_{\Mon(\cF)}.
\end{multline*}
Thus, the chain maps $\Mon(-\delta_1)$ and $\Nilp_{\Mon(\cF)}$ are homotopic, as desired.
\end{proof}

\subsection{Verdier duality}

Let us denote by $\D_\pari: \Parity_\Gm(\cX,\bk) \to \Parity_\Gm(\cX,\bk)$ the ordinary Verdier duality functor on parity sheaves.  We will use this to define a version of Verdier duality for each of the three derived categories from~\S\ref{sec:3derived}.  As a first step, we extend $\D_\pari$ to graded parity sheaves by the formula
\[
\D_\pari\left(\bigoplus_{i \in \Z} \cF^i[-i]\right) = \bigoplus_{i\in \Z} (\D\cF^i)[i].
\]
For $\cF,\cG \in \PargrGm{\cX}$, $\D_\pari$ induces an isomorphism of bigraded $\bk$-modules
\begin{equation}\label{eqn:verdier-uhom}
\D_\pari: \uHom(\cF,\cG) \to \uHom(\D_\pari\cG,\D_\pari\cF).
\end{equation}
The ring automorphism of $\coh^\bullet_\Gm(\pt,\bk)$ induced by $\D_\pari$ is the identity map.  More generally, the map~\eqref{eqn:verdier-uhom} is a homomorphism of $R$-modules.

The easiest case is the equivariant derived category: we define
\[
\D: \Dmix_\Gm(\cX,\bk)^\op \to \Dmix_\Gm(\cX,\bk) 
\qquad\text{by}\qquad
\D(\cF,\delta) = (\D_\pari\cF, \D_\pari\delta).
\]
Next, consider the map
\[
\id_\Lambda \otimes \D_\pari: \sA \otimes_R \uHom(\cF,\cG) \to \sA \otimes_R \uHom(\D_\pari\cG,\D_\pari\cF).
\]
This is a chain map with respect to the differential $\kappa$.  This observation lets us define
\[
\D: \Dmix(\cX,\bk) \to \Dmix(\cX,\bk)
\qquad\text{by}\qquad
\D(\cF,\delta) = (\D_\pari\cF, (\id_\Lambda \otimes \D_\pari)(\delta)).
\]
By construction, we clearly have
\[
\D \circ \For \cong \For \circ \D.
\]

The monodromic category is a bit trickier, because we would like $\Mon$ to commute with Verdier duality as well.  We define
\[
\D: \Dmix(\cX,\bk) \to \Dmix(\cX,\bk)
\quad\text{by}\quad
\D(\cF,\delta) = ((\D_\pari\cF)\la -2\ra[1], \D_\pari(\delta)\la -2\ra[2]).
\]

\begin{lem}
For $\cF \in \Dmix_\mon(\cX,\bk)$, there is a natural isomorphism $\Mon(\D\cF) \cong \D\Mon(\cF)$.
\end{lem}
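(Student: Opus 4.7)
The plan is a direct computation using the explicit block-matrix formula~\eqref{eqn:mon-concrete} for $\Mon$. Writing the differential of $\cF$ as $\delta = \delta_0 + \bxi \delta_1$, the formula immediately gives
\[
\Mon(\D\cF) = \left(\D_\pari\cF \oplus \D_\pari\cF\la -2\ra[1],\ \begin{bmatrix} \D_\pari\delta_0 & \D_\pari\delta_1 + \sr \\ \xi & -\D_\pari\delta_0 \end{bmatrix}\right).
\]
To compute $\D\Mon(\cF)$, I would start from $\Mon(\cF) = (\cF \oplus \cF\la -2\ra[1], M)$ and apply the monodromic Verdier duality: $\D_\pari$ transposes the block structure and dualizes each entry (leaving the scalars $\sr$ and $\xi$ unchanged), after which the shifts $\la -2\ra[1]$ on the object and $\la -2\ra[2]$ on the differential convert the result into an object on the summands $\D_\pari\cF\la -2\ra[1] \oplus \D_\pari\cF$ and introduce no Koszul signs on the $\sS$-valued entries, since $\sS$ sits in even cohomological degree.

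The underlying graded parity sheaves of the two sides then agree up to exchanging the two summands, so I would exhibit the isomorphism $\phi\colon \Mon(\D\cF) \simto \D\Mon(\cF)$ as a signed swap, with signs chosen precisely so that $\phi$ intertwines the diagonal entries $\pm\D_\pari\delta_0$ as well as the off-diagonal entries. Naturality in $\cF$ follows from the explicit formula for $\Mon$ on morphisms together with the fact that $\D_\pari$ acts entrywise on morphism matrices and fixes the scalars $\sr$ and $\xi$.

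The main obstacle is bookkeeping the signs, which arise from three interacting sources: the Koszul rule governing $\bs$, the block transposition implementing $\D_\pari$, and the extra $-1$ in the definition $(\cF,\delta)[1] = (\cF[1], -\bs(\delta))$. A conceptually cleaner alternative that bypasses this sign-chasing is to first establish a canonical isomorphism of bigraded $\sE$-modules $(\Lambda^\vee)^* \cong \Lambda^\vee\la 2\ra[-1]$, reflecting the Frobenius structure of the exterior algebra on a single odd generator; tensoring with $\D_\pari\cF$ over $R$ with $\uHom$-spaces and checking that the element $\omega$ is preserved under the induced identification then yields the isomorphism, with naturality automatic.
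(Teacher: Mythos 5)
Your overall strategy---compute both sides explicitly from~\eqref{eqn:mon-concrete}---is the same as the paper's, and your computation of $\Mon(\D\cF)$ and of the underlying graded parity sheaf of $\D\Mon(\cF)$ is correct. The gap is the final step, where you claim a ``signed swap'' intertwines the two differentials: no such map exists in general. Writing $d = \D_\pari(\delta_0)$ and $u = \D_\pari(\delta_1)+\sr$, the two differentials, expressed over the same decomposition $(\D_\pari\cF)\oplus(\D_\pari\cF)\la -2\ra[1]$, are
\[
\left[\begin{smallmatrix} d & u \\ \xi & -d \end{smallmatrix}\right]
\qquad\text{and}\qquad
\left[\begin{smallmatrix} -d & u \\ \xi & d \end{smallmatrix}\right],
\]
so after the summands are matched up they still differ by the sign of the odd-degree diagonal entries. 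A map whose two components are scalar multiples of the identity cannot repair this: conjugating by $\left[\begin{smallmatrix}\epsilon_1 & \\ & \epsilon_2\end{smallmatrix}\right]$ with $\epsilon_i=\pm\id$ leaves the diagonal entries untouched, and if instead you keep the summands of $\D\Mon(\cF)$ in the order $(\D_\pari\cF)\la-2\ra[1]\oplus\D_\pari\cF$ produced by the duality and use an honest swap $\left[\begin{smallmatrix} & \epsilon_2 \\ \epsilon_1 & \end{smallmatrix}\right]$, the chain-map condition forces $\epsilon_1=\epsilon_2$ (from the $\xi$-entries) and then $2d=0$ (from the diagonal entries). So the signs you propose to ``choose precisely'' exist only when $\D_\pari(\delta_0)$ is $2$-torsion.

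The missing idea is that the correcting automorphism must \emph{anticommute} with the odd-degree part of the differential while commuting with the even-degree part, and a constant sign on each summand cannot do this; a sign depending on the internal cohomological grading can. The paper writes $\D_\pari\cF = \bigoplus_i \cG^i[-i]$, sets $q = \sum_i (-1)^i \id_{\cG^i}[-i]$, and uses $\left[\begin{smallmatrix} q & \\ & q \end{smallmatrix}\right]$: since $\D_\pari(\delta_0)$ has cohomological degree $1$ while $\D_\pari(\delta_1)$, $\sr$, and $\xi$ have even degree, conjugation by $q$ flips the sign of the former and fixes the latter, which is exactly what is needed, and naturality is immediate. Your alternative route via a Frobenius isomorphism $(\Lambda^\vee)^{\ast}\cong\Lambda^\vee\la 2\ra[-1]$ is sound in spirit---it is the conceptual reason for the shift $\la -2\ra[1]$ built into the definition of $\D$ on the monodromic category---but the step ``check that $\omega$ is preserved under the induced identification'' conceals precisely the same sign phenomenon on odd-degree components, and carrying it out honestly again requires inserting a degree-dependent sign such as $q$. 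As written, neither route closes the gap.
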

\begin{proof}
Using the formula from~\eqref{eqn:mon-concrete}, we see that
\begin{align*}
\Mon(\D(\cF, \delta_0 + \bxi\delta_1))
&=
\left((\D\cF) \oplus (\D\cF)\la -2\ra[1],
\left[\begin{smallmatrix}
\D_\pari(\delta_0) & \D_\pari(\delta_1) + \sr \\
\xi & -\D_\pari(\delta_0)
\end{smallmatrix}\right]
\right),
\\
\D\Mon(\cF, \delta_0 + \bxi\delta_1)
&=
\left((\D\cF) \oplus (\D\cF)\la -2\ra[1],
\left[\begin{smallmatrix}
-\D_\pari(\delta_0) & \D_\pari(\delta_1) + \sr \\
\xi & \D_\pari(\delta_0)
\end{smallmatrix}\right]
\right)
\end{align*}
Write $\D\cF$ as $\bigoplus_{i \in \Z} \cG^i[-i]$ with $\cG^i \in \Parity_\Gm(\cX,\bk)$, and let $q \in \uEnd(\D\cF)$ be the map $q = \sum(-1)^i \id_{\cG^i}[-i]$.  It is easy to see from degree considerations that
\[
q \circ \D_\pari(\delta_0) = -\D_\pari(\delta_0) \circ q,
\qquad
q \circ \D_\pari(\delta_1) = \D_\pari(\delta_1) \circ q.
\]
It follows that the map
\[
\left[\begin{smallmatrix}
q & \\ & q
\end{smallmatrix}\right]:
(\D\cF) \oplus (\D\cF)\la -2\ra[1] \to (\D\cF) \oplus (\D\cF)\la -2\ra[1]
\]
defines a natural isomorphism $\Mon \circ \D \cong \D \circ \Mon$.
\end{proof}

It is left to the reader to check that all three versions of Verdier duality are triangulated functors.  They also satisfy
\[
\D \circ \D \cong \id.
\]
In particular, all three versions of $\D$ are equivalences of categories.

\subsection{Invariants and coinvariants of monodromy}

For $\cF, \cG \in \PargrGm{\cX}$, the counit map $\varepsilon_{R^\vee}: R^\vee \to \bk$ induces a map of $\uHom$-spaces that we denote by
\[
\Coi: \sS \otimes_R \uHom(\cF,\cG) \to \uHom(\cF,\cG).
\]
If $(\cF, \delta)$ is an object of $\DGmon{\cX}$, then it is easy to see that $(\cF, \Coi(\delta))$ is an object of $\DGeq{\cX}$.  We therefore obtain a functor denoted by $\Coi: \DGmon{\cX} \to \DGeq{\cX}$.  After passing to homotopy categories, we obtain a functor
\[
\Coi: \Dmix_\mon(\cX,\bk) \to \Dmix_\Gm(\cX,\bk),
\]
called the \emph{coinvariants of monodromy functor}, or simply the \emph{coinvariants functor}.

Unlike $\For$ and $\Mon$, the coinvariants functor does not commute with Verdier duality.  We define
\[
\Inv: \Dmix_\mon(\cX,\bk) \to \Dmix_\Gm(\cX,\bk)
\qquad\text{by}\qquad
\Inv = \D \circ \Coi \circ \D.
\]
It is called the \emph{invariants of monodromy functor}, or simply the \emph{invariants functor}.  The following fact is immediate from the definitions.

\begin{lem}
For $\cF \in \Dmix_\mon(\cX,\bk)$, there is a natural isomorphism $\Inv(\cF) \cong \Coi(\cF) \la 2\ra[-1]$.
\end{lem}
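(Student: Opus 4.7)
The plan is to unwind the definitions of the three functors involved. Write $\cF = (\cF_0, \delta)$ with $\delta \in (\sS \otimes_R \uEnd(\cF_0))^1_0$. By the monodromic formula for Verdier duality, $\D\cF$ has underlying graded parity sheaf $(\D_\pari\cF_0)\la -2\ra[1]$, with differential obtained from $\D_\pari(\delta)$ by the indicated shift. Applying $\Coi$ leaves the underlying graded parity sheaf unchanged and simply applies $\varepsilon_{R^\vee} \otimes \id$ to the differential. Finally, applying the equivariant $\D$ yields an object whose underlying graded parity sheaf is $\D_\pari((\D_\pari\cF_0)\la -2\ra[1])$, and whose differential is obtained by applying $\id_\sS \otimes \D_\pari$.

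The two key facts are: (a) $\D_\pari$ is an involution on graded parity sheaves satisfying $\D_\pari\la n\ra = \la -n\ra \D_\pari$ and $\D_\pari[n] = [-n]\D_\pari$, which collapses the underlying graded parity sheaf to $\cF_0 \la 2\ra[-1]$; and (b) $\varepsilon_{R^\vee}$ and $\D_\pari$ act on independent tensor factors of $\sS \otimes_R \uEnd(\cG) \cong R^\vee \otimes \uEnd(\cG)$, so they commute. Combined with the involutivity of $\D_\pari$, the differential of $\D\Coi\D\cF$ reduces to $(\varepsilon_{R^\vee} \otimes \id)(\delta) = \Coi(\delta)$, viewed in the shifted bidegree. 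This matches the differential of $\Coi(\cF)\la 2\ra[-1]$ exactly.

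Naturality in $\cF$ is automatic since $\D$, $\Coi$, and the shifts $\la 2\ra[-1]$ are all functors. I do not expect any substantive obstacle beyond careful bookkeeping of the bidegree shifts and signs introduced by the monodromic Verdier duality formula; the argument is essentially formal once one recognizes that $\Coi$ commutes with $\D_\pari$ at the level of the endomorphism algebras, and the two applications of $\D_\pari$ on the underlying graded parity sheaves combine with the outer shift $\la -2\ra[1]$ to produce exactly the shift $\la 2\ra[-1]$.
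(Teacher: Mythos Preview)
Your proposal is correct and matches the paper's approach: the paper states that the lemma is ``immediate from the definitions'' and gives no further argument, so your unwinding of $\Inv = \D \circ \Coi \circ \D$ is exactly what is intended. One small notational slip: after applying $\Coi$ you are in $\Dmix_\Gm(\cX,\bk)$, where the differential lives in $\uEnd(\cG)^1_0$ rather than $\sS \otimes_R \uEnd(\cG)$, so the outer $\D$ applies $\D_\pari$ directly rather than $\id_\sS \otimes \D_\pari$; this does not affect the argument.
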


The following proposition gives a key property of these functors.

\begin{prop}\label{prop:coi-adjoint}
The functor $\Mon \circ \For: \Dmix_\Gm(\cX,\bk) \to \Dmix_\mon(\cX,\bk)$ is right adjoint to $\Coi$ and left adjoint to $\Inv$.
\end{prop}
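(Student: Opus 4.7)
The plan is to establish $\Coi \dashv \Mon\circ\For$ at the level of the underlying dg categories, and then deduce $\Mon\circ\For \dashv \Inv$ formally from Verdier duality. For $\cG \in \DGmon{\cX}$ and $\cF \in \DGeq{\cX}$, the strategy is to construct a natural chain map
\[
\Psi_{\cG,\cF}: \uHom_{\DGmon{\cX}}(\cG, \Mon\For\cF) \to \uHom_{\DGeq{\cX}}(\Coi\cG, \cF)
\]
and show it is a quasi-isomorphism; passing to $H^0$ then gives the adjunction bijection.

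To define $\Psi$, I would unfold the source using $\Lambda^\vee \otimes \cF = \cF \oplus \cF\la -2\ra[1]$, obtaining a natural identification
\[
\sS \otimes_R \uHom(\cG, \Lambda^\vee \otimes \cF) \;\cong\; \sA^\vee \otimes \uHom(\cG,\cF),
\]
and set $\Psi = \varepsilon_{\sA^\vee} \otimes \id$; in the block notation of~\eqref{eqn:mon-concrete}, this sends $\alpha + \br\beta \mapsto \alpha|_{\sr=0}$ for $\alpha,\beta \in R^\vee \otimes \uHom(\cG,\cF)$. A direct computation using the matrix form of $\delta_{\Mon\For\cF}$ and the expansion $\delta_\cG = \sum_{k\geq 0}\sr^k\delta_{\cG,k}$ (so that $\Coi(\delta_\cG) = \delta_{\cG,0}$) confirms that $\Psi$ is a chain map: the $\sr\beta$ contribution dies at $\sr=0$, and the higher-order $\delta_{\cG,k}$-terms of $\alpha\delta_\cG$ likewise vanish upon setting $\sr=0$.

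The main obstacle is the quasi-isomorphism claim, which I would settle by a spectral-sequence argument. Equip $\sA^\vee \otimes \uHom(\cG,\cF)$ with the decreasing filtration $F^p$ defined as the $\bk$-submodule generated by $\sr^k\phi$ for $k \geq p$ together with $\sr^k\br\phi$ for $k+1 \geq p$, where $\phi \in \uHom(\cG,\cF)$. Inspecting each summand of $d_\mon$ shows that $F^\bullet$ is preserved; moreover, only $\delta_\cF\alpha$, $\alpha\delta_{\cG,0}$, $\sr\beta$ and their $\br$-slot analogues survive on the associated graded, while the cross-term $\br\xi\alpha$ coming from $\omega$ and the higher-order pieces $\alpha\sr^k\delta_{\cG,k}$ ($k\geq 1$) strictly raise the filtration. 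The induced $E_0$-differential is therefore $\kappa^\vee \otimes \id + \id \otimes d_\Coi$. On the column $p=0$ this gives $H^*(\uHom(\cG,\cF), d_\Coi)$; for each $p \geq 1$, the two-line piece $\sr^p\,\uHom \oplus \br\sr^{p-1}\,\uHom$ is acyclic, since any cycle $(\sr^p\phi,\,-\br\sr^{p-1}d_\Coi\phi)$ is precisely $d(\br\sr^{p-1}\phi)$. The spectral sequence thus degenerates at $E_1$ and is concentrated in the $p=0$ column, on which $\Psi$ visibly induces the identity. Hence $\Psi$ is a quasi-isomorphism, and $H^0(\Psi)$ is the desired natural adjunction isomorphism $\Coi \dashv \Mon\For$.

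For the second adjunction, $\Mon\circ\For$ commutes with $\D$ (because $\D$ commutes separately with $\Mon$ and with $\For$ by the results earlier in this section), while $\Inv = \D\circ\Coi\circ\D$ by definition. The routine chain
\begin{align*}
\Hom(\Mon\For\cF,\cG) &\cong \Hom(\D\cG, \Mon\For\D\cF) \\
&\cong \Hom(\Coi\D\cG, \D\cF) \\
&\cong \Hom(\cF, \Inv\cG)
\end{align*}
then produces $\Mon\For \dashv \Inv$ from $\Coi \dashv \Mon\For$.
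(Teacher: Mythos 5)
Your argument is correct, but it proves the adjunction by a different mechanism than the paper. The paper also reduces to the statement $\Coi \dashv \Mon\circ\For$ and gets the $\Inv$ half by the same Verdier-duality juggling you use; but for the first adjunction it simply writes down an explicit counit $\Coi\For'(\cF,\delta)\to(\cF,\delta)$ and unit $(\cG,\delta_\cG)\to\For'\Coi(\cG,\delta_\cG)$ (the latter with components $\id$ and $\delta_{\cG,1}$, using $\delta_\cG^2=\Theta$ to see it is a chain map) and checks the counit--unit equations directly. You instead produce the natural chain map $\Psi=\varepsilon_{\sA^\vee}\otimes\id$ on Hom complexes and show it is a quasi-isomorphism via the $\sr$-adic-type filtration; your identification of the $E_0$-differential with $\kappa^\vee\otimes\id\pm\id\otimes d_{\Coi}$, and the observation that each piece $\sr^p\,\uHom\oplus\br\sr^{p-1}\,\uHom$ is the cone of an identity map, are both correct, and naturality of $\Psi$ in $\cF$ and $\cG$ is easily checked, so passing to $H^0$ does give the binatural isomorphism defining the adjunction. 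Two remarks: (i) you should say a word about convergence of your spectral sequence --- it holds because, by the degree considerations behind Remark~\ref{rmk:hom-nilp} (each $\uHom(\cG,\cF)^i_j$ is nonzero for only finitely many $j$ once $i$ is fixed, and $\sr$ has bidegree $\binom{0}{-2}$), the filtration is finite in each bidegree, so the argument is legitimate but this step should be recorded; equivalently you could avoid the spectral sequence by noting $\Psi$ is surjective with kernel $F^1$ and proving $F^1$ acyclic by the same two-line computation. (ii) Your route buys slightly more (a quasi-isomorphism of the full Hom complexes, hence explicit control in all bidegrees), while the paper's unit/counit construction is shorter and hands you the adjunction morphisms explicitly, which are used implicitly later (e.g.\ in the discussion around~\eqref{eqn:intro-coi-j} and the functor $\cJ$); the sign bookkeeping in the identification $\uHom(\cG,\Lambda^\vee\otimes\cF)\cong\Lambda^\vee\otimes\uHom(\cG,\cF)$ does not affect your filtration argument.
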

\begin{proof}
For brevity, let $\For' = \Mon \circ\For: \Dmix_\Gm(\cX,\bk) \to \Dmix_\mon(\cX,\bk)$.  It is enough to prove that $\For'$ is right adjoint to $\Coi$, as the other part of the proposition would then follow by Verdier duality.

This functor can be described by omitting the term ``$\delta_1$'' from the formula in~\eqref{eqn:mon-concrete}: for $(\cF,\delta_\cF) \in \Dmix_\Gm(\cX,\bk)$, we have
\[
\For'(\cF,\delta_\cF) =  
\begin{tikzcd}
\cF \ar[loop above, out=120, in=60, distance=20, "{\sst[1]}"{description, pos=0.18}, "\delta_\cF"]
  \ar[r, shift left=1.5, "{\sst[1]}" description, "\xi" above=2] &
\cF\la -2\ra[1] \ar[loop above, out=120, in=60, distance=20, "{\sst[1]}"{description, pos=0.18}, "-\delta_\cF"]
  \ar[l, shift left=1.5, "{\sst[1]}" description, "\sr" below=2].
\end{tikzcd}
\]
Then $\Coi(\For'(\cF,\delta_\cF)) \in \Dmix_\Gm(\cX,\bk)$ is obtained by suppressing the arrow labelled $\sr$.  Define a map $\epsilon: \Coi(\For'(\cF,\delta_\cF)) \to (\cF,\delta_\cF)$ by the diagram
\[
\begin{tikzcd}
\cF \ar[loop left, distance=30, "{\sst[1]}"{description, pos=0.18}, "\delta_\cF"]
  \ar[d, "{\sst[1]}" description, "\xi" left=2] \ar[r, "\id"] &
\cF \ar[loop right, distance=30, "{\sst[1]}"{description, pos=0.18}, "\delta_\cF"] \\
\cF\la -2\ra[1] \ar[loop left, distance=30, "{\sst[1]}"{description, pos=0.18}, "-\delta_\cF"]
\end{tikzcd}
.
\]

Next, let $(\cG, \delta_\cG) \in \Dmix_\mon(\cX,\bk)$.  Expand $\delta_\cG \in R^\vee \otimes \uEnd(\cG)$ as
\[
\delta_\cG = \delta_{\cG,0} + \sr \delta_{\cG,1},
\]
where $\delta_{\cG,0} \in \uEnd(\cG)^1_0$, and $\delta_{\cG,1} \in (R^\vee \otimes \uEnd(\cG))^1_2$.  The equation $\delta_\cG^2 = \Theta = \sr\xi$ implies that
\begin{equation}\label{eqn:coi-adj-check}
\delta_{\cG,0}^2 = 0
\qquad\text{and}\qquad
\delta_{\cG,0}\delta_{\cG,1} + \delta_{\cG,1}\delta_{\cG,0} + \sr\delta_{\cG,1}^2 = \xi\id.
\end{equation}
We have $\Coi(\cG,\delta_\cG) = (\cG,\delta_{\cG,0})$, and
\[
\For'(\Coi(\cG,\delta_\cG)) =
\begin{tikzcd}
\cG \ar[loop above, out=120, in=60, distance=20, "{\sst[1]}"{description, pos=0.18}, "\delta_{\cG,0}"]
  \ar[r, shift left=1.5, "{\sst[1]}" description, "\xi" above=2] &
\cG\la -2\ra[1] \ar[loop above, out=120, in=60, distance=20, "{\sst[1]}"{description, pos=0.18}, "-\delta_{\cG,0}"]
  \ar[l, shift left=1.5, "{\sst[1]}" description, "\sr" below=2].
\end{tikzcd}
\]
Define a map $\eta: (\cG,\delta_\cG) \to \For'(\Coi(\cG,\delta_\cG))$ by
\[
\begin{tikzcd}
\cG \ar[loop left, distance=30, "{\sst[1]}"{description, pos=0.18}, "\delta_\cG = \delta_{\cG,0} + \sr \delta_{\cG_1}"]
  \ar[r, "\id"] \ar[dr, "\delta_{\cG,1}"'] &
\cG \ar[loop right, distance=30, "{\sst[1]}"{description, pos=0.18}, "\delta_{\cG_0}"]
  \ar[d, shift left=1.5, "{\sst[1]}" description, "\xi" right=2] \\
&
\cG\la -2\ra[1] \ar[loop right, distance=30, "{\sst[1]}"{description, pos=0.18}, "-\delta_{\cG,0}"]
  \ar[u, shift left=1.5, "{\sst[1]}" description, "\sr" left=2]
\end{tikzcd}
.
\]
It follows from~\eqref{eqn:coi-adj-check} that this is indeed a chain map.  
Straightforward calculations then show that $\epsilon$ and $\eta$ satisfy the counit--unit equations.
\end{proof}

\section{Recollement}
\label{sec:recollement}

Let $i: \cZ \hookrightarrow \cX$ be the inclusion of a closed union of strata, and let $j: \cU \hookrightarrow \cX$ be the complementary open inclusion.  In the ordinary (nonnmixed) derived category, the functors $i_*$ and $j^*$ take parity sheaves to parity sheaves, so we get induced functors
\begin{equation}\label{eqn:recolle-easy}
\begin{aligned}
i_* &:\PargrGm{\cZ} \to \PargrGm{\cX}, \\
j^*&: \PargrGm{\cX} \to \PargrGm{\cU}.
\end{aligned}
\end{equation}
It is straightforward to see that these extend to triangulated functors of the three kinds of derived categories from Section~\ref{sec:3derived}.  The goal of this section is to prove the following theorem.

\begin{thm}\label{thm:recollement}
Let $\sD$ stand for one of $\Dmix_\Gm$, $\Dmix$, or $\Dmix_\mon$.  Let $i: \cZ \hookrightarrow \cX$ be the inclusion of a closed union of strata, and let $j: \cU \hookrightarrow \cX$ be the complementary open inclusion.  Then the functors $i_*$ and $j^*$ both admit left and right adjoints, and these adjoints form a recollement diagram:
\[
\begin{tikzcd}[column sep=large]
\sD(\cZ,\bk) \ar[r, "i_*" description] &
\sD(\cX,\bk) \ar[r, "j^*" description] 
  \ar[l, bend left=30, "i^!" description] \ar[l, bend right=30, "i^*" description] &
\sD(\cU,\bk)
  \ar[l, bend left=30, "j_*" description] \ar[l, bend right=30, "j_!" description] 
\end{tikzcd}
\]
\end{thm}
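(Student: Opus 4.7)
The plan is to reduce the theorem to constructing the left adjoints $i^*$ and $j_!$ on each of the three derived categories; Verdier duality from Section~\ref{sec:monodromy} then yields the right adjoints via $i^! = \D \circ i^* \circ \D$ and $j_* = \D \circ j_! \circ \D$. Since the classical $i_*$ and $j^*$ preserve parity sheaves, the maps in~\eqref{eqn:recolle-easy} extend termwise to triangulated functors on the dgg categories of Section~\ref{sec:3derived} and hence on their homotopy categories.

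The central task is the construction of $j_!$. Fix $(\cF,\delta_\cF)$ in the relevant dgg category over $\cU$. By Axiom~(4) of Section~\ref{ss:parity}, each indecomposable summand of $\cF$ on a stratum $\cU_s$ extends to a parity sheaf on $\cX$ supported on $\overline{\cX_s}$; assembling these, one obtains a lift $\tilde\cF$ on $\cX$ with $j^*\tilde\cF \cong \cF$, together with a lift $\tilde\delta$ of $\delta_\cF$ to the corresponding endomorphism space. The lifted differential equation (for instance $\tilde\delta^2 = \Theta\id$ in the monodromic setting) may fail on $\cZ$, and $i^*(\tilde\cF,\tilde\delta)$ is generally nonzero. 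The key operation is then to iteratively enlarge $(\tilde\cF,\tilde\delta)$ by taking cones with $i_*$-images of parity sheaves on $\cZ$, simultaneously correcting the differential equation and killing the classes of $i^*$ of the total complex. The construction of $i^*$ is dual: the classical $i^*$ of a parity sheaf is a parity complex on $\cZ$, which can be represented as a bounded complex of parity sheaves, and this representation transports to the dgg setting with the help of analogous cone corrections.

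The main obstacle is termination of the iterative resolution. For $\Dmix_\Gm(\cX,\bk)$ and $\Dmix(\cX,\bk)$, termination follows from Remark~\ref{rmk:hom-nilp}: the graded $\uHom$ spaces between parity sheaves have only finitely many nonzero homogeneous components, so only finitely many cone additions are required and the resulting complex is bounded. In $\Dmix_\mon(\cX,\bk)$ this finiteness fails (Example~\ref{ex:mon-der1}), and the cleanest remedy is to first build $j_!$ for $\Dmix(\cX,\bk)$, transfer it along the fully faithful embedding $\Mon$ of Proposition~\ref{prop:mon-ff} to the constructible subcategory of $\Dmix_\mon(\cX,\bk)$, and then extend to all of $\Dmix_\mon(\cX,\bk)$ using the adjunction $\Coi \dashv \Mon\circ\For$ of Proposition~\ref{prop:coi-adjoint} to separate off the ``monodromy'' degree of freedom.

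Once $j_!$ and $i^*$ are in place, one checks adjunction on shifted parity sheaves, which form a generating class by Lemma~\ref{lem:con-gen}. The remaining recollement axioms---fully faithfulness of $i_*$, $j_!$, and $j_*$; the vanishing $j^* \circ i_* = 0$; and the two defining distinguished triangles---are formal consequences of the standard cone construction of Section~\ref{sec:triangulated} together with Verdier duality.
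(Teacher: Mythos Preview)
Your proposal has a genuine gap in the monodromic case, and the route you take even in the other two cases is vaguer than it needs to be.

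The paper does not attempt an ``iterative resolution'' at all.  Instead, it proceeds by induction on the number of strata in $\cZ$, reducing to the case where $\cZ = \cX_s$ is a \emph{single} closed stratum (Lemma~\ref{lem:recollement1}).  In that case the classical functors $i^*$ and $i^!$ already take parity sheaves to parity sheaves, so they extend immediately to all three dgg categories---uniformly, with $\sC = R$, $\sA$, or $\sS$ as appropriate.  One then defines, for any $\cF \in \sD(\cX,\bk)$, the object $\cF^+ = \mathrm{cocone}(\cF \to i_*i^*\cF)$ in a single step, proves that the full subcategory $\sD^+$ generated by such objects is equivalent to $\sD(\cU,\bk)$ via $j^*$ (using Lemma~\ref{lem:closed-ses} and the essential surjectivity of $j^*$ from Lemma~\ref{lem:open-gen}), and sets $j_! = \iota \circ (j^*\iota)^{-1}$.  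No termination issue arises, and $\Dmix_\mon$ requires no special treatment.

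Your proposed workaround for $\Dmix_\mon$---build $j_!$ on the constructible subcategory via $\Mon$, then ``extend'' using $\Coi \dashv \Mon\circ\For$---does not work.  Not every object of $\Dmix_\mon(\cU,\bk)$ is constructible (indeed, the whole point of Section~\ref{sec:rfree-constr} is that constructibility is a nontrivial property, established only for $R$-free varieties and only \emph{after} recollement is in place), and the adjunction of Proposition~\ref{prop:coi-adjoint} gives no mechanism for reconstructing an arbitrary monodromic object from constructible ones.  Separately, even for $\Dmix_\Gm$ and $\Dmix$, your appeal to Remark~\ref{rmk:hom-nilp} for termination is not convincing as stated: you have not specified what the iteration is or what quantity decreases, and the finiteness in that remark concerns $\Hom$ in a single pair of bidegrees, not a bound that forces a resolution to stop.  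The paper's single-stratum reduction bypasses all of this.
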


We first require some preliminaries involving the case where $\cZ$ is a single stratum.

\begin{lem}\label{lem:closed-ses}
Let $i: \cX_s \hookrightarrow \cX$ be the inclusion of a closed stratum.  Let $\cU = \cX \smallsetminus \cX_s$, and let $j: \cU \hookrightarrow \cX$ be the inclusion map.  Let $\sC$ be a flat $R$-algebra.  For any two graded parity sheaves $\cF, \cG \in \PargrGm{\cX}$, there is a natural short exact sequence of bigraded $\bk$-modules
\[
0 \to \sC \otimes_R \uHom(\cF,i_*i^!\cG) \to \sC \otimes_R \uHom(\cF,\cG) \to \sC \otimes_R \uHom(j^*\cF,j^*\cG) \to 0.
\]
\end{lem}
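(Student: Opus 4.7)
The plan is to deduce this from the standard recollement distinguished triangle in $\Db_\Gm(\cX,\bk)$ via a parity-theoretic vanishing of the connecting map, then invoke flatness of $\sC$.

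First, I would start from the distinguished triangle $i_*i^!\cG \to \cG \to j_*j^*\cG \xrightarrow{[1]}$ in $\Db_\Gm(\cX,\bk)$. Applying $\bigoplus_n \Hom(\cF, (-)\{n\})$ and using the $(j^*,j_*)$-adjunction to identify the third term as $\bigoplus_n \Hom(j^*\cF, j^*\cG\{n\}) = \uHom(j^*\cF,j^*\cG)$, one obtains a long exact sequence of bigraded $R$-modules. The first term is $\uHom(\cF, i_*i^!\cG)$, which is concentrated on the bigraded diagonal because $i_*i^!\cG$ is again a graded parity sheaf. The key step is to show that the connecting map vanishes, i.e., that $\bigoplus_n \Hom(\cF, i_*i^!\cG[1]\{n\}) = 0$.

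By $(i^*,i_*)$-adjunction this becomes $\bigoplus_n \Hom_{\Db_\Gm(\cX_s,\bk)}(i^*\cF, i^!\cG[1]\{n\})$. Both $i^*\cF$ and $i^!\cG$ are graded parity on the single stratum $\cX_s$, and by conditions (ii)--(iii) of Section~\ref{ss:parity} every parity sheaf on $\cX_s$ is a direct sum of shifts of $\ubk_{\cX_s}[\dim \cX_s]$. Hence the above Hom spaces reduce to shifts of $\coh^\bullet_{\Gm\ltimes H}(\cX_s,\bk)$, which is concentrated in even cohomological degrees; the extra $[1]$ forces the whole direct sum to vanish. This is the same parity-vanishing argument that underlies the proof of~\cite[Proposition~2.6]{jmw}.

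The long exact sequence thus collapses to a short exact sequence of bigraded $R$-modules
\[ 0 \to \uHom(\cF, i_*i^!\cG) \to \uHom(\cF, \cG) \to \uHom(j^*\cF, j^*\cG) \to 0. \]
Since $\sC$ is flat over $R$, applying $\sC \otimes_R (-)$ preserves short exactness, giving the stated sequence. The main obstacle is the parity-vanishing step, but it is already implicit in the classical JMW treatment; once it is in hand, the flatness step is routine.
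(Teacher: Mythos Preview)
Your proposal is correct and follows the same approach as the paper: the paper's proof simply says that the case $\sC = R$ is a restatement of \cite[Proposition~2.6]{jmw} and then invokes flatness of $\sC$, and what you have written is exactly an unpacking of that citation (the recollement triangle plus the parity-vanishing of the connecting map) followed by the same flatness step.
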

In applications, the ring $\sC$ will be one of $R$, $\sA$, or $\sS$.  
\begin{proof}
In the special case where $\sC = R$, this is essentially a restatement of~\cite[Proposition~2.6]{jmw}.  The general case follows because $\sC$ is flat over $R$.
\end{proof}

\begin{lem}\label{lem:open-gen}
Let $\cU = \cX \smallsetminus \cX_s$ be the complement of a closed stratum, and let $j: \cU \to \cX$ be the inclusion map.  Then the functor $j^*: \sD(\cX,\bk) \to \sD(\cU,\bk)$ is essentially surjective.
\end{lem}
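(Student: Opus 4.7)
The plan is to produce, for each $(\cF',\delta')$ in the dgg category underlying $\sD(\cU,\bk)$, a lift to the corresponding dgg category on $\cX$. To begin, I lift the underlying graded parity sheaf $\cF'$: every indecomposable parity sheaf on $\cU$ has the form $\cE_t^\cU\la m\ra[n]$ for some stratum $\cX_t\subseteq\cU$, and the uniqueness of indecomposable parity sheaves gives $j^*\cE_t^\cX=\cE_t^\cU$ (both restrict to $\ubk[\dim\cX_t]$ on $\cX_t$ and are supported on $\overline{\cX_t}\cap\cU$). Decomposing $\cF'$ into indecomposable summands and lifting termwise yields $\tilde\cF_0\in\PargrGm{\cX}$ together with a fixed isomorphism $j^*\tilde\cF_0\simto\cF'$. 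Next, by Lemma~\ref{lem:closed-ses} applied with $\sC\in\{R,\sA,\sS\}$ appropriate to $\sD\in\{\Dmix_\Gm,\Dmix,\Dmix_\mon\}$, the restriction map $\sC\otimes_R\uEnd(\tilde\cF_0)^1_0\twoheadrightarrow\sC\otimes_R\uEnd(\cF')^1_0$ is surjective, so I can lift (the transported) $\delta'$ to some $\tilde\delta_0\in\sC\otimes_R\uEnd(\tilde\cF_0)^1_0$.

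The crux is the third step: adjust $(\tilde\cF_0,\tilde\delta_0)$ so that $\tilde\delta\circ\tilde\delta$ equals the required value ($0$ in the first two cases, $\Theta\cdot\id$ in the monodromic case). Since the equation already holds after restriction to $\cU$, the error $\tilde\delta_0\circ\tilde\delta_0-(\mathrm{target})$ lies in the two-sided dg-ideal $K:=\sC\otimes_R\uHom(\tilde\cF_0,i_*i^!\tilde\cF_0)$. My plan is to induct on the number of nonzero components of $\cF'=\bigoplus_i\cF'^i[-i]$, closely following the reduction in the proof of Lemma~\ref{lem:con-gen}: write $(\cF',\delta')$ as a distinguished triangle whose other two terms each have fewer nonzero components than $\cF'$, lift those two inductively to $\tilde A,\tilde B\in\sD(\cX,\bk)$, lift the connecting morphism $h$ between them using the long exact sequence arising from Lemma~\ref{lem:closed-ses}, and take the cone to obtain $\tilde\cF\in\sD(\cX,\bk)$.

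The main obstacle will be lifting the connecting morphism at the inductive step. The long exact sequence in cohomology attached to the short exact sequence of Lemma~\ref{lem:closed-ses} shows that lifts of $h$ exist modulo an obstruction class in $H^1$ of the kernel complex $\sC\otimes_R\uHom(\tilde B,i_*i^!\tilde A)$, which need not vanish a priori. The plan for killing this obstruction is to replace one of the lifts $\tilde A,\tilde B$ by an alternative lift that differs from it by a direct summand supported on the closed stratum $\cX_s$; such a modification leaves $j^*$ unchanged (up to isomorphism) but shifts the obstruction class. The technical heart of the argument is thus a cohomological computation ensuring that every obstruction can be killed in this way. For $\sD=\Dmix_\mon$, the equation $\tilde\delta^2=\Theta\cdot\id$ precludes the simple matrix decomposition used in Lemma~\ref{lem:con-gen}, so I would instead adapt the induction to the generating family of monodromic objects of the form $\cF\to\cF\la-2\ra[1]$ described after Definition~\ref{defn:monodromy}.
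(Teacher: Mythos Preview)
Your first two steps---lifting the underlying graded parity sheaf and then lifting the differential using the surjectivity from Lemma~\ref{lem:closed-ses}---match the paper exactly. The divergence, and the gap, is in how you correct the error $\tilde\delta_0^2-(\text{target})$.

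Your inductive plan leaves two things unproven. First, the obstruction-killing step: you assert that modifying a lift by a summand supported on $\cX_s$ can always kill the class in $H^1$ of the kernel complex, but you give no argument for this, and it is not formal---you would need to show that every such class arises from some such modification. Second, for the monodromic case you propose to adapt the induction to the generating family $\cF\rightleftarrows\cF\la-2\ra[1]$, but you have not established that $\Dmix_\mon(\cU,\bk)$ is generated by such objects; that is a nontrivial statement (compare the effort in Proposition~\ref{prop:constr-1strat}, which treats only the $R$-free single-stratum case).

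The paper avoids both issues with a single direct construction that works uniformly. After lifting to $(\tilde\cF,\tilde\delta)$, the error $\tilde\delta^2-\Theta$ lies in the image of $\sC\otimes_R\uHom(\tilde\cF,i_*i^!\tilde\cF)$ under composition with the counit $\epsilon:i_*i^!\tilde\cF\to\tilde\cF$; write it as $\epsilon\circ\delta'$. Rather than trying to adjust $\tilde\delta$, the paper \emph{enlarges} the object: set $\cG=\tilde\cF\oplus i_*i^!\tilde\cF[1]$ with differential
\[
\delta_\cG=\begin{bmatrix}\tilde\delta & \epsilon\\ -\delta' & -i_*i^!\tilde\delta\end{bmatrix},
\]
and verify by a direct $2\times2$ matrix computation (using naturality of $\epsilon$ and the defining equation $\epsilon\delta'=\tilde\delta^2-\Theta$) that $\delta_\cG^2=\Theta\cdot\id$. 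Since the added summand is supported on $\cX_s$, one has $j^*(\cG,\delta_\cG)\cong(\cF',\delta')$. No induction, no obstruction theory, and the monodromic case is handled on the same footing as the other two.
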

\begin{proof}
We will prove this in detail for $\Dmix_\mon$.  To obtain the proof for $\Dmix_\Gm$ or $\Dmix$, replace all mentions of $\sS$ by $R$ or $\sA$, as appropriate, and replace all mentions of $\Theta$ by $0$.

Consider an object $(\cF,\delta)$ in $\Dmix_\mon(\cU,\bk)$.  The assumptions in Section~\ref{ss:parity} imply that every parity sheaf on $\cU$ extends to a parity sheaf on $\cX$. The same holds for graded parity sheaves: we can find an object $\tilde\cF \in \PargrGm{\cX}$ together with an isomorphism $j^*\tilde\cF \simto \cF$.  By Lemma~\ref{lem:closed-ses}, the map
\begin{equation}\label{eqn:open-gen-surj}
\sS \otimes_R \uEnd(\tilde\cF) \to \sS \otimes_R \uEnd(\cF)
\end{equation}
is surjective.  Choose an element $\tilde\delta \in \sS \otimes_R \uEnd(\tilde\cF)^1_0$ such that $j^*\tilde\delta$ is identified with $\delta$.  Since $\delta^2 = \Theta$, we see that $\tilde\delta^2 - \Theta$ lies in the kernel of~\eqref{eqn:open-gen-surj}.  By Lemma~\ref{lem:closed-ses} again, there exists a unique element $\delta' \in \sS \otimes_R \uHom(\tilde\cF, i_*i^!\tilde\cF)^2_0$ such that
\begin{equation}\label{eqn:open-gen-dp}
\epsilon \circ \delta' = \tilde\delta^2 - \Theta,
\end{equation}
where $\epsilon: i_*i^!\tilde\cF \to \tilde\cF$ is the adjunction map.

Let $\cG = \tilde\cF \oplus i_*i^!\tilde\cF[1]$, and define $\delta_\cG \in \uEnd(\cG)^1_0$ by
\[
\delta_\cG =
\begin{bmatrix}
\tilde\delta & \epsilon \\ -\delta' & -i_*i^!\tilde\delta
\end{bmatrix}
\]
We will show below that $(\cG,\delta_\cG)$ is an object of $\Dmix_\mon(\cX,\bk)$.  This claim implies the lemma, since we will then clearly have $j^*(\cG,\delta_\cG) \cong (\cF,\delta)$.

Observe first that
\begin{equation}\label{eqn:open-gen-diff}
\delta_\cG^2 =  
\begin{bmatrix}
\tilde\delta & \epsilon \\ -\delta' & -i_*i^!\tilde\delta
\end{bmatrix}^2
=
\begin{bmatrix}
\tilde\delta^2 - \epsilon\delta' & \tilde \delta\epsilon - \epsilon (i_*i^!\tilde\delta) \\
-\delta'\tilde\delta + (i_*i^!\tilde\delta)\delta' & -\delta'\epsilon + (i_*i^!\tilde\delta)^2 
\end{bmatrix}.
\end{equation}
The upper left entry of this matrix is $\Theta$, by~\eqref{eqn:open-gen-dp}.  The upper right entry is $0$ because $\epsilon$ is a natural transformation, i.e., because the following diagram commutes:
\begin{equation}\label{eqn:open-gen-comm}
\begin{tikzcd}
i_*i^!\tilde\cF \ar[r, "\epsilon"] \ar[d, "i_*i^!\tilde \delta"'] & \tilde\cF \ar[d, "\tilde\delta"] \\
i_*i^!\tilde\cF[1] \ar[r, "\epsilon"] & \tilde\cF[1]
\end{tikzcd}
\end{equation}
Next, by~\eqref{eqn:open-gen-dp}, we have $\epsilon \delta'\tilde \delta = \tilde \delta^3 - \Theta\tilde\delta$.  On the other hand, using~\eqref{eqn:open-gen-comm}, we have $\epsilon(i_*i^!\tilde\delta)\delta' = \tilde\delta \epsilon \delta' = \tilde\delta^3 - \Theta\tilde\delta$.  We have shown that
\[
\epsilon \delta'\tilde \delta = \epsilon(i_*i^!\tilde\delta)\delta'.
\]
By Lemma~\ref{lem:closed-ses}, composition with $\epsilon$ gives an injective map $\sS \otimes_R \uHom(\cF,i_*i^!\cF) \to \sS \otimes_R \uEnd(\cF)$.  The equation above therefore implies that $\delta'\tilde \delta = (i_*i^!\tilde\delta)\delta'$.  In other words, the bottom left entry of~\eqref{eqn:open-gen-diff} vanishes.

Finally, applying $i_*i^!$ to~\eqref{eqn:open-gen-dp}, we find that $i_*i^!\delta' = (i_*i^!\tilde\delta)^2 - \Theta$.  The commutative square
\[
\begin{tikzcd}[column sep=large]
i_*i^!\tilde\cF \ar[r, "\epsilon_\cF"] \ar[d, "i_*i^!\delta'"'] & \tilde\cF \ar[d, "\delta'"] \\
i_*i^!\tilde\cF[2] \ar[r, equal, "\id = \epsilon_{i_*i^!\cF}"] & i_*i^!\tilde\cF[2]
\end{tikzcd}
\]
shows that $i_*i^!\delta' = \delta'\epsilon$.  Together, these observations show that the bottom right entry of~\eqref{eqn:open-gen-diff} is $\Theta$.  We have shown that $\delta_\cG^2 = \Theta$, as desired.
\end{proof}

\begin{lem}\label{lem:recollement1}
In the special case where $\cZ$ consists of a single stratum, Theorem~\ref{thm:recollement} holds.
\end{lem}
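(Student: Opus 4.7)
The strategy is to construct the four missing functors $i^*, i^!, j_!, j_*$ explicitly at the dgg level and then verify the recollement axioms. Throughout, let $\sD$ stand for one of $\Dmix_\Gm$, $\Dmix$, or $\Dmix_\mon$, and let $\sC$ denote $R$, $\sA$, or $\sS$ accordingly.

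I would handle $i^*$ and $i^!$ first at the level of parity sheaves. Since $\cZ = \cX_s$ is a single stratum, for any parity sheaf $\cE$ on $\cX$ the classical $i^*\cE$ and $i^!\cE$ have cohomology concentrated in a single parity on $\cX_s$; by the hypotheses of Section~\ref{ss:parity}, they must be direct sums of shifts of $\cE_s$, hence parity sheaves. The classical adjunctions $i^* \dashv i_* \dashv i^!$ on parity sheaves then extend to functors on the dgg categories by applying $i^*, i^!$ to both the underlying graded parity sheaf and the differential.

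For $j_*$ and $j_!$, the idea is to use the mapping cone construction. For $(\cF,\delta) \in \sD(\cU,\bk)$, Lemma~\ref{lem:open-gen} provides a lift $(\tilde\cF,\tilde\delta) \in \sD(\cX,\bk)$ with $j^*(\tilde\cF,\tilde\delta) \cong (\cF,\delta)$, and one sets
\[
j_*(\cF,\delta) := \cone\bigl(i_*i^!\tilde\cF \to \tilde\cF\bigr), \qquad
j_!(\cF,\delta) := \cone\bigl(\tilde\cF \to i_*i^*\tilde\cF\bigr)[-1],
\]
using the adjunction counit and unit from the previous step. Since $j^* i_* = 0$, applying $j^*$ recovers $(\cF,\delta)$, so $j^*j_* \cong \id \cong j^*j_!$ automatically. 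Well-definedness and functoriality then have to be established using that parity extensions are unique up to (non-canonical) isomorphism.

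The adjunctions are verified using Lemma~\ref{lem:closed-ses}. For $j^* \dashv j_*$, apply $\uHom(\cG,-)$ to the defining triangle $i_*i^!\tilde\cF \to \tilde\cF \to j_*(\cF,\delta) \to$ for $\cG \in \sD(\cX,\bk)$; the resulting long exact sequence in cohomology, together with the short exact sequence of Lemma~\ref{lem:closed-ses}, identifies $\Hom(\cG, j_*(\cF,\delta))$ with $\Hom(j^*\cG, \cF)$. The adjunction $j_! \dashv j^*$ is dual. The main obstacle I expect is the rigidification of $j_*$ and $j_!$ into honest functors (functorial in morphisms, not merely well-defined on objects) given the non-canonical nature of parity extensions; once this is in hand, the remaining recollement axioms follow from the explicit constructions and Lemma~\ref{lem:closed-ses}.
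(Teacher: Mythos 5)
Your treatment of $i^*$ and $i^!$ matches the paper's (restriction to a single stratum preserves parity, and the adjunctions pass to the dgg level), and your objectwise formulas for $j_!$ and $j_*$ produce the right objects: the cocone of $\tilde\cF \to i_*i^*\tilde\cF$ is exactly the object the paper calls $\tilde\cF{}^+$. The adjunction check via Lemma~\ref{lem:closed-ses} (comparing the long exact sequence of the defining triangle with the long exact sequence coming from the short exact sequence of $\uHom$-complexes) is also sound in outline. But there is a genuine gap: the step you defer --- ``rigidification of $j_*$ and $j_!$ into honest functors'' --- is the actual content of the lemma, and your proposal contains no idea for resolving it. Cones in a homotopy category are not functorial, and the lift $(\tilde\cF,\tilde\delta)$ supplied by Lemma~\ref{lem:open-gen} is a non-canonical choice; a morphism $\cF \to \cG$ in $\sD(\cU,\bk)$ does not come with a preferred lift to a morphism $\tilde\cF \to \tilde\cG$, nor with any guarantee that chosen lifts compose correctly. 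So ``$j_!(\cF,\delta) := \cone(\tilde\cF \to i_*i^*\tilde\cF)[-1]$'' defines an object up to isomorphism but not a functor, and saying the rest follows ``once this is in hand'' leaves the central difficulty unproved.

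The paper's proof is organized precisely to avoid this problem. It introduces, for every $\cF \in \sD(\cX,\bk)$, the cocone $\cF^+$ of the adjunction map $\cF \to i_*i^*\cF$, proves that $\Hom(\cF^+, i_*\cG) = 0$ and that $\Hom(\cF^+,\cG) \to \Hom(j^*\cF^+, j^*\cG)$ is an isomorphism (this is where Lemma~\ref{lem:closed-ses} enters), and then considers the full triangulated subcategory $\sD^+ \subset \sD(\cX,\bk)$ generated by the objects $\cF^+$. These facts, together with the essential surjectivity of $j^*$ from Lemma~\ref{lem:open-gen}, show that $j^*$ restricted to $\sD^+$ is an equivalence onto $\sD(\cU,\bk)$; the definition $j_! := \iota \circ (j^* \circ \iota)^{-1}$ is then automatically a functor, and the adjunction, the isomorphism $\id \simto j^*j_!$, and the triangle $j_!j^*\cF \to \cF \to i_*i^*\cF \to$ are verified afterwards. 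If you want to salvage your objectwise construction instead, you would need to prove, for each fixed $\cF$, a natural-in-$\cG$ isomorphism $\Hom(\cG, j_*\cF) \cong \Hom(j^*\cG, \cF)$ (checking compatibility of the two long exact sequences, including the connecting maps) and then invoke the fact that pointwise representing objects of $\cG \mapsto \Hom(j^*\cG,\cF)$ assemble uniquely into a right adjoint; either way, an actual argument is required at exactly the point your proposal stops.
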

\begin{proof}
In this proof, we let
\[
\sC =
\begin{cases}
R & \text{if we are working in $\Dmix_\Gm(\cX,\bk)$,} \\
\sA & \text{if we are working in $\Dmix(\cX,\bk)$,} \\
\sS & \text{if we are working in $\Dmix_\mon(\cX,\bk)$.}
\end{cases}
\]
Because $\cZ$ consists of a single stratum, the ordinary (nonmixed) functors $i^*$ and $i^!$ take parity sheaves to parity sheaves, so as in~\eqref{eqn:recolle-easy}, there are induced functors $i^*,i^!: \sD(\cX,\bk) \to \sD(\cZ,\bk)$.  Note that if $\cF \in \sD(\cZ,\bk)$ and $\cG \in \sD(\cX,\bk)$, then there is a natural isomorphism of dgg $\bk$-modules
\[
\sC \otimes_R \uHom(\cF, i_*\cG) \cong \sC \otimes_R \uHom(i^*\cF,\cG),
\]
and hence of $\bk$-modules $\Hom(\cF,i_*\cG) \cong \Hom(i^*\cF,\cG)$.  In other words, the functor $i^*: \sD(\cX,\bk) \to \sD(\cZ,\bk)$ is indeed left adjoint to $i_*: \sD(\cZ,\bk) \to \sD(\cX,\bk)$.  Similarly, $i^!$ is right adjoint to $i_*$.

We will construct the right adjoint $j_!$ to $j^*$; we will show that the adjunction map $\id \to j^*j_!$ is an isomorphism; and we will show that for every object $\cF \in \sD(X,\bk)$, there is a distinguished triangle
\[
j_!j^*\cF \to \cF \to i_*i^*\cF \to.
\]
The proofs for the corresponding assertions about the right adjoint $j_*$ to $j^*$ are similar and will be omitted.

For any $\cF \in \sD(\cX,\bk)$, let $\cF^+$ denote the cocone of the adjunction map $\cF \to i_*i^*\cF$. In the following paragraphs, we will prove a number of assertions about $\cF^+$.

{\it Step 1. For any $\cG \in \sD(\cZ,\bk)$, we have $\Hom(\cF^+,i_*\cG = 0)$.}  The natural map $\cF \to i_*i^*\cF$ induces a map
\[
\sC \otimes_R \uHom(i_*i^*\cF,i_*\cG) \to \sC \otimes_R \uHom(\cF,i_*\cG).
\]
By the usual adjunction properties of $i_*$ and $i^*$, this is an isomorphism of bigraded $\bk$-modules (at the level of graded parity sheaves), and hence also of chain complexes.  Next, the distinguished triangle $\cF^+ \to \cF \to i_*i^*\cF \to$ gives rise to a long exact sequence
\begin{multline*}
\Hom(i_*i^*\cF, i_*\cG) \to \Hom(\cF,i_*\cG) \to \Hom(\cF^+,i_*\cG) \to \\
\Hom(i_*i^*\cF, i_*\cG[1]) \to \Hom(\cF,i_*\cG[1]).
\end{multline*}
The discussion above implies that the first and last maps here as isomorphisms.  It follows that $\Hom(\cF^+,i_*\cG) = 0$.

{\it Step 2. For any $\cG \in \sD(\cX,\bk)$, the natural map $\Hom(\cF^+,\cG) \to \Hom(j^*\cF^+,j^*\cG)$ is an isomorphism.}  By Lemma~\ref{lem:closed-ses}, we have a short exact sequence of chain complexes
\[
0 \to \sC \otimes_R \uHom(\cF^+, i_*i^!\cG) \to \sC \otimes_R \uHom(\cF^+,\cG) \to \sC \otimes_R \uHom(j^*\cF^+, j^*\cG) \to 0.
\]
By Step~1, the first chain complex is acyclic, so the map between the second and third is a quasi-isomorphism.

{\it Step 3. Let $\sD^+ \subset \sD(\cX,\bk)$ be the full triangulated subcategory generated by objects of the form $\cF^+$, and let $\iota: \sD^+ \hookrightarrow \sD(\cX,\bk)$ be the inclusion functor.  Then $j^* \circ \iota: \sD^+ \to \sD(\cU,\bk)$ is an equivalence of categories.}  Step~2 implies that $j^* \circ \iota$ is fully faithful.  Lemma~\ref{lem:open-gen} says that the image of $j^*$ generates $\sD(\cU,\bk)$.  But for any $\cF \in \sD(\cX,\bk)$, we clearly have
\begin{equation}\label{eqn:recolle1-open}
j^*\cF \cong j^*\cF^+ = j^*(\iota\cF^+),
\end{equation}
so the image of $j^*\circ \iota$ also generates $\sD(\cU,\bk)$.  We conclude that $j^*\circ \iota$ is essentially surjective, and hence an equivalence of categories.

We now define $j_!: \sD(\cU,\bk) \to \sD(\cX,\bk)$ to be the functor
\begin{equation}\label{eqn:j!-defn}
j_! = \iota \circ (j^* \circ \iota)^{-1}: \sD(\cU,\bk) \to \sD(\cX,\bk).
\end{equation}

{\it Step 4. The functor $j_!$ is left adjoint to $j^*$, and the adjunction map $\id \to j^*j_!$ is an isomorphism.}  By construction, there is a natural isomorphism $j^*j_! \cong \id$.  To show that $j_!$ is left adjoint to $j_*$, we must show that for all $\cF \in \sD(\cU,\bk)$ and $\cG \in \sD(\cX,\bk)$, the map
\begin{equation}\label{eqn:recolle2-adj}
\Hom(j_!\cF,\cG) \xrightarrow{j^*} \Hom(j^*j_!\cF,j^*\cG) \cong \Hom(\cF,j^*\cG)
\end{equation}
is an isomorphism.  By Lemma~\ref{lem:open-gen}, there exists an object $\tilde\cF \in \sD(\cX,\bk)$ such that $\cF \cong j^*\tilde\cF$.  By~\eqref{eqn:recolle1-open}, we have $\cF \cong j^*(\iota\tilde\cF^+)$, and then $j_!\cF = \iota (j^*\iota)^{-1}j^*(\iota\tilde\cF^+) \cong \iota\tilde\cF^+ \cong \tilde\cF^+$.  Step~2 then tells us that~\eqref{eqn:recolle2-adj} is an isomorphism.

{\it Step 5. There exists a distinguished triangle $j_!j^*\cF \to \cF \to i_*i^*\cF \to$, where the first two maps are adjunction maps.}  Consider the distinguished triangle
\[
\cF^+ \xrightarrow{\alpha} \cF \to i_*i^*\cF \to
\]
where the second map is from adjunction.  Step~4 implies that $\Hom(j_!j^*\cF, i_*i^*\cF) = \Hom(j_!j^*\cF, i_*i^*\cF[-1]) = 0$, so the adjunction map $\epsilon: j_!j^*\cF \to \cF$ factors uniquely through $\alpha$: there is a commutative diagram
\[
\begin{tikzcd}
j_!j^*\cF \ar[dr, "\epsilon"] \ar[d, "h"'] \\
\cF^+ \ar[r, "\alpha"] & \cF \ar[r] & i_*i^*\cF \ar[r] & {}
\end{tikzcd}
\]
To finish the proof, we must show that $h$ is an isomorphism.  Since $j_!j^*\cF$ and $\cF^+$ both lie in $\sD^+$, Step~3 tells us that it is enough to show that $j^*h$ is an isomorphism.  Since $j^*i_* = 0$, the bottom row above shows that $j^*\alpha$ is an isomorphism, and Step~4 tells us that $j^*\epsilon$ is an isomorphism, so $j^*h$ is an isomorphism, as desired.
\end{proof}

\begin{ex}\label{ex:gm-der!}
Let $\cX$ be as in Example~\ref{ex:gm-der}, and let $j: \cX_1 \hookrightarrow \cX$ be the inclusion of the open stratum.  Let $\cE_{\cX_1} = \ubk_{\cX_1}\{1\} = \cE_1|_{\cX_1}$.  Then $j_!\cE_{\cX_1}$ is the object described in Example~\ref{ex:gm-der}.
\end{ex}

\begin{lem}\label{lem:recolle-ind}
Let $\cY \subset \cX$ be a closed union of strata. Let $\cZ \subset \cY$ be a single closed stratum, and let $\cU = \cX \smallsetminus \cZ$.  Denote the inclusion maps as follows:
\[
\begin{tikzcd}
\cY \cap \cU \ar[r, "k_\cU"] \ar[d, "j_\cY"'] &
  \cU \ar[d, "j"] \\
\cY \ar[r, "k"] & \cX
\end{tikzcd}
\]
We then have
\[
j_! k_{\cU*} \cong k_* j_{\cY!}
\qquad\text{and}\qquad
j_* k_{\cU*} \cong k_* j_{\cY*}
\]
\end{lem}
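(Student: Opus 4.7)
The plan is to deduce the first isomorphism from the recollement of Lemma~\ref{lem:recollement1}, applied to the single-stratum closed inclusions $\cZ \hookrightarrow \cX$ and $\cZ \hookrightarrow \cY$, and then derive the second isomorphism via Verdier duality. I need two preparatory ingredients, both routine. First, since the square in the statement is Cartesian and the functors $k_*$, $k_{\cU*}$, $j^*$, $j_\cY^*$ are all defined on graded parity sheaves (being pushforward along closed inclusions of unions of strata and restriction to open unions of strata), the base-change isomorphism $j^* k_* \simto k_{\cU *} j_\cY^*$ holds pointwise on parity sheaves; because none of these functors modifies differentials nontrivially, it lifts to the dgg categories and hence to $\sD$. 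Second, $k_*$ (and similarly $k_{\cU*}$) is fully faithful: this is standard on graded parity sheaves and, since $R$, $\sA$, and $\sS$ are all flat over $R$, survives the tensor-product step defining $\uHom$-spaces in each of the three dgg categories of Section~\ref{sec:3derived}.

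For the first isomorphism, write $i = k \circ i_\cY$ with $i_\cY: \cZ \hookrightarrow \cY$, so $i_* \cong k_* i_{\cY*}$. I construct a natural transformation $\alpha: j_! k_{\cU*} \to k_* j_{\cY!}$ via the $(j_!, j^*)$-adjunction from the composite
\[
k_{\cU*} \simto k_{\cU*} j_\cY^* j_{\cY!} \simto j^* k_* j_{\cY!},
\]
in which the first arrow is the unit of $(j_{\cY!}, j_\cY^*)$, an isomorphism by Step~4 of Lemma~\ref{lem:recollement1} applied to $\cY$, and the second is base change. By construction, $j^*(\alpha)$ is an isomorphism. A standard consequence of the recollement (the triangle $j_!j^*\cF \to \cF \to i_*i^*\cF \to$ combined with $j^*j_! \cong \id$) is that the pair $(j^*, i^*)$ is jointly conservative on $\sD(\cX,\bk)$, so it suffices to show $i^*(\alpha)$ is an isomorphism. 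The source vanishes because $i^*j_! = 0$ in any recollement. For the target, by full faithfulness of $k_*$ and Step~1 of Lemma~\ref{lem:recollement1} applied to $\cZ \hookrightarrow \cY$,
\[
\Hom(i^* k_* j_{\cY!}\cF, \cH) \cong \Hom(k_* j_{\cY!}\cF, k_* i_{\cY*}\cH) \cong \Hom(j_{\cY!}\cF, i_{\cY*}\cH) = 0
\]
for every $\cH$; taking $\cH = i^* k_* j_{\cY!}\cF$ forces $i^* k_* j_{\cY!}\cF = 0$. Hence $\alpha$ is an isomorphism.

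For the second isomorphism, I observe that $\D$ commutes with $k_*$ and with $k_{\cU*}$: pushforward along a closed inclusion of unions of strata preserves parity, and classical Verdier duality on parity sheaves commutes with such pushforwards; the additional global shift in the monodromic version of $\D$ commutes trivially. Combined with $j_* \cong \D j_! \D$ and $j_{\cY*} \cong \D j_{\cY!} \D$, the first isomorphism yields
\[
j_* k_{\cU*} \cong \D j_! \D k_{\cU*} \cong \D (j_! k_{\cU*}) \D \cong \D (k_* j_{\cY!}) \D \cong k_* \D j_{\cY!} \D \cong k_* j_{\cY*}.
\]

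The conceptual content is light once Lemma~\ref{lem:recollement1} is available; the argument is essentially formal. The only point demanding genuine care is the uniform verification of base change and of the full faithfulness of $k_*$ across the three flavors of dgg category (with $R$, $\sA$, or $\sS$), together with the compatibility of these isomorphisms with the nontrivial differentials present in $\Dmix(\cX,\bk)$ and $\Dmix_\mon(\cX,\bk)$ — but since all four functors act only on the underlying graded parity sheaf and transport $\delta$ functorially, this compatibility is automatic.
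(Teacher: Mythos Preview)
Your proof is correct and takes a genuinely different route from the paper's.  The paper works directly with the explicit construction of $j_!$ from Lemma~\ref{lem:recollement1}: it observes that $k_*$ intertwines the adjunction maps $\cF \to i_{\cY*}i_\cY^*\cF$ and $k_*\cF \to i_*i^*(k_*\cF)$, hence commutes with the ``${}^+$'' cocone construction and with the inclusion $\iota$ of the subcategory $\sD^+$; since $j_! = \iota \circ (j^*\iota)^{-1}$ by definition, the base-change square for $j^*k_* \cong k_{\cU*}j_\cY^*$ then forces $j_! k_{\cU*} \cong k_* j_{\cY!}$ directly.  The second isomorphism is handled by the dual construction rather than by invoking Verdier duality.

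Your argument is more abstract: you build the comparison map by adjunction and then test it against the conservative pair $(j^*, i^*)$, using only formal recollement properties (notably $i^*j_! = 0$) together with base change and full faithfulness of $k_*$.  This has the advantage of being portable to any recollement situation and of not touching the internal mechanics of how $j_!$ was built; the cost is that you must separately verify the compatibilities $\D k_* \cong k_* \D$ and $j_* \cong \D j_! \D$ in each of the three dgg models, whereas the paper's hands-on approach sidesteps Verdier duality entirely.  Both verifications you need are indeed routine, so the trade-off is purely stylistic.
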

Note that both $j$ and $j_\cY$ are inclusions of the complement of a single closed stratum, so the functors $j_!$, $j_*$, $j_{\cY!}$, and $j_{\cY*}$ exist by Lemma~\ref{lem:recollement1}.  On the other hand, $k$ and $k_\cU$ are closed inclusions, so $k_*$ and $k_{\cU*}$ are as in~\eqref{eqn:recolle-easy}.
\begin{proof}
Let $i_\cY: \cZ \hookrightarrow \cY$ and $i: \cZ \hookrightarrow \cX$ be the inclusion maps.  For $\cF \in \sD(\cY,\bk)$, consider the adjunction map $\cF \to i_{\cY*}i_\cY^*\cF$, and apply $k_*$.  Using properties of $i_{\cY*}$, $i_*$, and $k_*$ in the ordinary (unmixed) setting, this map can be identified with the adjunction map $(k_*\cF) \to i_*i^*(k_*\cF)$.  Therefore, $k_*$ commutes with the ``${}^+$'' construction from the proof of Lemma~\ref{lem:recollement1}.  

Let $\iota: \sD^+ \hookrightarrow \sD(\cX,\bk)$ and $\iota: \sD_\cY^+ \hookrightarrow \sD(\cY,\bk)$ be as in Step~3 of the proof of Lemma~\ref{lem:recollement1}.  The preceding paragraph implies that $k_*$ takes $\sD_\cY^+$ to $\sD^+$, and hence that it commutes with $\iota$.

Since we have $j^*k_* \cong k_{\cU*}j_\cY^*$ in the unmixed setting, the same isomorphism holds in the mixed setting as well.  Thus, the diagram
\[
\begin{tikzcd}
\sD_\cY^+ \ar[r, "j_\cY^* \circ \iota"] \ar[d, "k_*"'] &
  \sD(\cY \cap \cU,\bk) \ar[d, "k_{\cU*}"] \\
\sD^+ \ar[r, "j^* \circ \iota "] & \sD(\cU,\bk)
\end{tikzcd}
\]
commutes up to natural isomorphism.  The horizontal arrows are equivalences of categories, and we have
\[
j_! k_{\cU*} = \iota (j^*\iota)^{-1} k_{\cU*} \cong \iota k_* (j_\cY^*\iota)^{-1} \cong k_* j_{\cY!},
\]
as desired.  The proof that $j_* k_{\cU*} \cong k_* j_{\cY*}$ is similar.
\end{proof}

\begin{proof}[Proof of Theorem~\ref{thm:recollement}]
We proceed by induction on the number of strata in $\cZ$.  The case of a single stratum has been done in Lemma~\ref{lem:recollement1}.  Assume now that $\cZ$ has more than one stratum.  Choose a closed stratum $\cX_s \subset \cZ$, and let $\cX' = \cX \smallsetminus \cX_s$ and $\cZ' = \cX' \cap \cZ = \cZ \smallsetminus \cX_s$.  Let $j'$, $i'$, $j''$, $i''$, $j_\cZ$, and $i_\cZ$ be the inclusion maps indicated in the diagram below:
\[
\begin{tikzcd}
\cZ' = \cX' \cap \cZ \ar[r, "i'"] \ar[dd, "j_\cZ"'] &
  \cX' = \cX \smallsetminus \cX_s \ar[d, "j''"] &
  \cU \ar[l, "j'"'] \ar[dl, "j = j'' \circ j'"] \\
& \cX \\
\cZ \ar[ur, "i"]& \cX_s \ar[u, "i'' = i \circ i_\cZ"'] \ar[l, "i_\cZ"]
\end{tikzcd}
\]
By induction, $(j')^*$ and $(j'')^*$ both have left adjoints, denoted by $j'_!$ and $j''_!$, respectively.  Moreover, the adjunction maps $\eta': \id \to (j')^*j'_!$ and $\eta'': \id \to (j'')^*j''_!$ are isomorphisms.  It follows that $j^* \cong (j')^* \circ (j'')^*$ has a left adjoint, given by $j_! \cong j''_! \circ j'_!$.  The adjunction map $\id \to j^*j_!$ factors as
\[
\id \xrightarrow{\eta'} (j')^* j'_! \xrightarrow{(j')^*\eta'' j'_!} (j')^*(j'')^*j''_! j'_! \cong j^* j_!,
\]
so it is an isomorphism.

Next, let $\cG$ be the cone of the adjunction map $\epsilon: j_!j^*\cF \to \cF$.  By induction, we have distinguished triangles
\begin{gather*}
j''_!(j'')^*\cF \to \cF \to i''_*(i'')^*\cF \to, \\
j'_!(j')^*(j'')^*\cF \to (j'')^*\cF \to i'_*(i')^*(j'')^*\cF \to
\end{gather*}
where the first two maps in each line are adjunction maps.  Apply $j''_!$ to the latter, and combine it with the former into the following octahedral diagram:
\[
\begin{tikzcd}[column sep=0pt,row sep=tiny]
&&& {} \\
&& \hspace{-3em}j''_!i'_*(i')^*(j'')^*\cF \cong i_*j_{\cZ!}(i')^*(j'')^*\cF\hspace{-3em} \ar[dr] \ar[ur] &&& {} \\
& j''_!(j'')^*\cF \ar[dr] \ar[ur] && \cG \ar[ddr] \ar[urr] \\
&& \cF \ar[ur] \ar[drr] \\
j_!j^*\cF \cong j''_!j'_!(j')^*(j'')^*\cF\hspace{-3em} \ar[uur] \ar[urr] &&&& \hspace{-3em}i''_*(i'')^*\cF = i_*i_{\cZ*}(i'')^*\cF \ar[dr] \\
&&&&& {}
\end{tikzcd}
\]
Here, the topmost object has been rewritten using Lemma~\ref{lem:recolle-ind}.  Since $i_*$ is fully faithful, the rightmost distinguished triangle above shows that $\cG$ must lie in the image of $i_*$, say $\cG \cong i_*\cG'$.  We thus have a distinguished triangle
\[
j_!j^*\cF \to \cF \to i_*\cG' \to.
\]
Then~\cite[Corollaire~1.1.10]{bbd} tells us that $\cG'$ is unique up to canonical isomorphism, and~\cite[Proposition~1.1.9]{bbd} says that the assignment $\cF \mapsto \cG'$ is a functor.  Denote this functor by $i^*: \sD(\cX,\bk) \to \sD(\cZ,\bk)$.  It is then straightforward to check the remaining desired properties of $i^*$.

The construction of $j_*$ and $i^!$ is similar, and will be omitted.
\end{proof}

\begin{prop}\label{prop:recolle-commute}
Let $i: \cZ \hookrightarrow \cX$ be the inclusion of a closed union of strata, and let $j: \cU \hookrightarrow \cX$ be the complementary open inclusion.  Then the forgetful, monodromy, and coinvariant functors commute with all six functors in the recollement diagram.
\end{prop}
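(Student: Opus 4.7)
The plan is to handle $i_*$ and $j^*$ by direct inspection of the definitions, and then deduce commutation with the four adjoints $i^*, i^!, j_!, j_*$ by induction on the number of strata in $\cZ$, paralleling the proof of Theorem~\ref{thm:recollement}.

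First, the functors $i_*$ and $j^*$ act on an object $(\cF,\delta)$ in $\DGeq{\cX}$, $\DGcon{\cX}$, or $\DGmon{\cX}$ by applying the corresponding ordinary (unmixed) sheaf functor termwise to $\cF$ and to $\delta$. Since the ordinary $i_*$ and $j^*$ are additive, $R$-linear, and commute with the internal shift $\la 1\ra$, they commute with each of $\For$ (base change along $R \hookrightarrow \sA$), $\Coi$ (base change along $\varepsilon_{R^\vee}$), and $\Mon$ (tensoring with $\Lambda^\vee$ and adjusting the differential as in~\eqref{eqn:mon-concrete}). This settles the commutations with $i_*$ and $j^*$ for all three functors.

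Next, in the base case where $\cZ = \cX_s$ is a single stratum, Lemma~\ref{lem:recollement1} shows that $i^*$ and $i^!$ also act at the graded parity sheaf level, so the same argument as in the previous paragraph gives commutation with $\For$, $\Mon$, and $\Coi$. For $j_!$, I would use the essential surjectivity of $j^*$ (Lemma~\ref{lem:open-gen}) to write an arbitrary object of $\sD(\cU,\bk)$ as $j^*\tilde\cF$, so that $j_! j^*\tilde\cF$ is identified with the cocone of the adjunction map $\tilde\cF \to i_*i^*\tilde\cF$. Since $F \in \{\For,\Mon,\Coi\}$ is triangulated and commutes with $i_*$ and $i^*$, it sends this cocone to the cocone of the analogous adjunction map for $F\tilde\cF$, which is $j_! j^* F\tilde\cF \cong j_! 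F(j^*\tilde\cF)$. The case of $j_*$ is treated by the Verdier-dual argument using the triangle $i_*i^!\cF \to \cF \to j_*j^*\cF \to$.

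For the inductive step, I would invoke the octahedral diagram from the proof of Theorem~\ref{thm:recollement}, which constructs the adjoints for $\cZ$ from those for a single closed stratum $\cX_s \subset \cZ$ and for $\cX' = \cX \smallsetminus \cX_s$. Since $F$ is triangulated and, by induction, commutes with all six functors at the previous stage, applying $F$ to the octahedron yields the analogous octahedron for $F\cF$. The main obstacle I anticipate is verifying that $F$ sends the adjunction unit $\tilde\cF \to i_*i^*\tilde\cF$ to the adjunction unit for $F\tilde\cF$ (rather than to some other map with the same source and target). This compatibility of natural transformations should follow from the explicit construction of the adjunction morphisms in Lemma~\ref{lem:recollement1} via the short exact sequence of Lemma~\ref{lem:closed-ses}, whose terms are $R$-linear and hence functorially compatible with each of $\For$, $\Mon$, and $\Coi$.
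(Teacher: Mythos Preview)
Your approach is sound, but it is more labor-intensive than the paper's and contains one ambiguous step that could be a genuine error depending on how you mean it.

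First, the ambiguity: you write that ``$j_*$ is treated by the Verdier-dual argument.'' If you mean literally applying $\D$ to transfer the $j_!$ result, this fails for $\Coi$, which does \emph{not} commute with Verdier duality (this is exactly why the paper introduces $\Inv = \D\circ\Coi\circ\D$ as a separate functor). If instead you mean the formally dual argument---replacing the cocone of $\cF \to i_*i^*\cF$ by the cone of $i_*i^!\cF \to \cF$---then this is fine, since in the single-stratum case $i^!$ also acts termwise on parity sheaves. You should make this explicit.

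Second, the comparison: the paper avoids your inductive octahedral step entirely. Its argument runs as follows. For $j_!$, the proof of Theorem~\ref{thm:recollement} builds the general $j_!$ as a composition $j''_! \circ j'_!$ of single-stratum pieces, so commutation with $F \in \{\For,\Mon,\Coi\}$ reduces immediately to the single-stratum case---which you handle correctly via the ${}^+$ construction. Then for $i^*$ with \emph{arbitrary} $\cZ$, the paper simply applies $F$ to the recollement triangle $j_!j^*\cF \to \cF \to i_*i^*\cF \to$: since $F$ is triangulated and already commutes with $j_!$, $j^*$, and $i_*$, the third vertex is forced, and full faithfulness of $i_*$ yields $Fi^* \cong i^*F$. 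No octahedron, no tracking of adjunction units through an induction. Your approach would work once the unit-compatibility is verified (and it can be, since in the single-stratum case the adjunction is induced termwise from the ordinary one), but the paper's route sidesteps that bookkeeping altogether.
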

\begin{proof}
For the functors $i_*$ and $j^*$ from~\eqref{eqn:recolle-easy}, this is clear.

Let us now consider $j_!$.  From the construction in the proof of Theorem~\ref{thm:recollement}, it is clear that it is enough to prove the statement in the case where $\cZ$ consists of a single stratum.  In this case, the forgetful, monodromy, and coinvariants functors also commute with $i^*$ (which takes parity sheaves to parity sheaves), and hence with the ``${}^+$'' construction from the proof of Lemma~\ref{lem:recollement1}.  They then also commute with the functor $\iota$ from Step~3 of the proof of that lemma.  From~\eqref{eqn:j!-defn}, we conclude that they commute with $j_!$, as desired.

Next, consider $i^*$.  For $\cF \in \Dmix_\Gm(\cX,\bk)$, we can construct a commutative diagram
\[
\begin{tikzcd}
j_!j^*\For(\cF) \ar[r] \ar[d] & \For(\cF) \ar[r] \ar[d, equal] & i_*i^*\For(\cF) \ar[d, dashed] \ar[r] & {} \\
\For(j_!j^*\cF) \ar[r] & \For(\cF) \ar[r] & \For(i_*i^*\cF) \ar[r] & {}
\end{tikzcd}
\]
The first vertical arrow is an isomorphism by the previous paragraph, so the third one is as well.  Since $i_*$ is fully faithful, we conclude that $i^*\For(\cF) \cong \For(i^*\cF)$.

The same argument applies to $\Mon$ and $\Coi$, and the proofs for $j_*$ and $i^!$ are similar.
\end{proof}

\section{The perverse \texorpdfstring{$t$}{t}-structure}
\label{sec:perverse}

In this section, we will define the perverse $t$-structure on $\Dmix_\Gm(\cX,\bk)$ and on $\Dmix(\cX,\bk)$.  (We will not define any $t$-structure on $\Dmix_\mon(\cX,\bk)$.)  The case of $\Dmix_\Gm(\cX,\bk)$ is rather similar to~\cite[\S3.5]{ar:mpsfv2} (although that paper assumes that each stratum is an affine space).

Let $\varpi$ be a generator of the unique maximal ideal of $\bk$.  (If $\bk$ is a field, then $\varpi = 0$.)  For any stratum $\cX_s$, we have a parity sheaf $\ubk_{\cX_s}\{\dim X_s\}$. Define an object $\ubkpi_{\cX_s}\{\dim X_s\}$ of $\Dmix_\Gm(\cX_s,\bk)$ or $\Dmix(\cX_s,\bk)$ as follows:
\[
\ubkpi_{\cX_s}\{\dim X_s\} =
\begin{cases}
\cone(\ubk_{\cX_s}\{\dim X_s\} \xrightarrow{\varpi \cdot \id} \ubk_{\cX_s}\{\dim X_s\}) & \text{if $\bk$ is not a field,} \\
\ubk_{\cX_s}\{\dim X_s\} & \text{if $\bk$ is a field.}
\end{cases}
\]

\begin{lem}\label{lem:strat-tstruc}
Let $\cA_\Gm \subset \Dmix_\Gm(\cX_s,\bk)$, resp.~$\cA \subset \Dmix(\cX_s,\bk)$ be the full subcategory generated under extensions by the objects
\begin{equation}\label{eqn:strat-tgen}
\ubk_{\cX_s}\{\dim X_s\}\la n\ra
\qquad\text{and}\qquad
\ubkpi_{\cX_s}\{\dim X_s\}\la n\ra
\end{equation}
for $n \in \Z$.  Then $\cA_\Gm$, resp.~$\cA$, is the heart of a unique bounded $t$-structure on $\Dmix_\Gm(\cX_s,\bk)$, resp.~$\Dmix(\cX_s,\bk)$.
\end{lem}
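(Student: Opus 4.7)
The plan is to verify the $t$-structure axioms directly on each of $\Dmix_\Gm(\cX_s,\bk)$ and $\Dmix(\cX_s,\bk)$. The key input is that on a single stratum, $\Parity_\Gm(\cX_s,\bk)$ is particularly rigid: by the assumptions in \S\ref{ss:parity}, every indecomposable parity sheaf is a shift $\cE\{n\}$ of the single object $\cE := \ubk_{\cX_s}\{\dim X_s\}$, with $\Hom_{\Parity}(\cE, \cE\{n\}) \cong \coh^n_{\Gm \ltimes H}(\cX_s,\bk)$ free over $\bk$ and concentrated in nonnegative even degrees. Using the identification $\Dmix_\Gm(\cX_s,\bk) \cong \Kb\Parity_\Gm(\cX_s,\bk)$ together with the fact that $\uHom$ on parity sheaves is concentrated on the diagonal, and unpacking $\cE\la n\ra = \cE\{-n\}[n]$, one computes that $\Hom_{\Dmix_\Gm}(\cE\la n\ra, \cE\la p\ra[k])$ vanishes unless $k = n - p$, in which case it equals $\coh^{n-p}_{\Gm \ltimes H}(\cX_s,\bk)$. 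Evenness and nonnegativity of this cohomology force vanishing whenever $k < 0$. The analogous vanishing involving the generators $\ubkpi_{\cX_s}\{\dim X_s\}\la n\ra = \cone(\cE\la n\ra \xrightarrow{\varpi} \cE\la n\ra)$ follows from the defining triangle and a long-exact-sequence argument.

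With this negative-$\Ext$ vanishing, I would take $\sD^{\leq 0}_\Gm$ (resp.~$\sD^{\geq 0}_\Gm$) to be the full subcategory of $\Dmix_\Gm(\cX_s,\bk)$ generated under extensions by the shifts $X[-n]$ (resp.~$X[n]$) of generators $X$ in~\eqref{eqn:strat-tgen} for $n \geq 0$. The orthogonality $\Hom(\sD^{\leq 0}_\Gm, \sD^{\geq 1}_\Gm) = 0$ follows by dévissage from the computation above. For the truncation axiom, I would induct on the number of nonzero components of an object represented as a chain complex $\cF = \bigoplus \cF^i[-i]$ in $\Kb\Parity_\Gm(\cX_s,\bk)$ (in the style of Lemma~\ref{lem:con-gen}): one peels off an extremal component and uses the identification $\cE\{n\} = \cE\la -n\ra[n]$ to sort each indecomposable summand into $\sD^{\leq 0}_\Gm$ or $\sD^{\geq 1}_\Gm$, completing the truncation triangle inductively. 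This yields $\cA_\Gm = \sD^{\leq 0}_\Gm \cap \sD^{\geq 0}_\Gm$ as the heart; boundedness follows from the finiteness of the chain-complex representatives, and uniqueness is the standard fact that a bounded $t$-structure is determined by its heart.

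The case of $\Dmix(\cX_s,\bk)$ follows the same template. The $\Hom$-complex between parity sheaves is now $\sA \otimes_R \uHom(\cE,\cE\{m\})$ with differential $\kappa$, but since this $\uHom$ is free over $R$ and $\varepsilon_\sA: (\sA,\kappa) \to \bk$ is a quasi-isomorphism by Lemma~\ref{lem:sa-counit}, the $\Ext$-groups agree with those in the $\Gm$-equivariant case, so the vanishing from the first step still holds. The main obstacle is the truncation step in the constructible case: a differential $\delta = \delta_0 + \bxi \delta_1$ on a chain complex includes a $\bxi$-component which records the monodromy endomorphism (Proposition~\ref{prop:constr-monodromy}) and cannot simply be ignored when peeling off extremal degrees. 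Carefully tracking these odd-degree contributions through the induction, and showing that they can be absorbed into the truncation triangles without obstructing the $t$-structure axioms, is the technical heart of the argument for $\Dmix(\cX_s,\bk)$.
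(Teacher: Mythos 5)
Your $\Hom$ computations in the $\Gm$-equivariant case are essentially the paper's claim (the paper records exactly the vanishing for $m<0$ and for $m=0$, $n\neq 0$, the value $\bk$ for $m=n=0$, and $\bk$-freeness for $m=1$), but the construction of the truncation triangles --- which is the actual content of the lemma --- has a genuine gap. Your recipe ``peel off an extremal component and sort each indecomposable summand into $\sD^{\le 0}$ or $\sD^{\ge 1}$'' fails when $\bk$ is a complete local PID rather than a field, and that case is the reason the objects $\ubkpi_{\cX_s}\{\dim X_s\}\la n\ra$ appear in the statement at all. Concretely, put $\cE=\ubk_{\cX_s}\{\dim X_s\}$ and $C=\ubkpi_{\cX_s}\{\dim X_s\}=\cone(\varpi:\cE\to\cE)$, and consider the two-term complex $\cE\xrightarrow{\varpi}\cE$ in complex degrees $0$ and $1$, i.e.\ $C[-1]$. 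Sorting its summands would predict a truncation triangle $\cE\to C[-1]\to \cE[-1]\to$, but no such triangle exists: $\Hom(\cE,C[-1])=0$ (apply $\Hom(\cE,-)$ to the defining triangle and use that $\varpi$ acts injectively on the free module $\Hom(\cE,\cE)\cong\bk$), so the first map would be zero and $\cE[1]$ would split off $\cE[-1]$, which is absurd. The correct truncation is $0\to C[-1]\to C[-1]$, since $C[-1]$ is a $[-1]$-shift of a heart generator. So aisle membership is not detected summand-by-summand; the differential components divisible by $\varpi$ are exactly the difficulty, and your induction does not engage with them. (In the $\Gm$-equivariant case over a field your sorting does work after Gaussian elimination, since in a minimal complex every nonzero differential component strictly changes the internal shift.) The paper sidesteps this entirely: its proof only verifies generation (Lemma~\ref{lem:con-gen}) and the $\Hom$ conditions above, and then quotes \cite[Lemma~A.1]{ahr:ies}, which is precisely the black box that builds the truncations, including the torsion objects $\ubkpi_{\cX_s}\{\dim X_s\}\la n\ra$, from those hypotheses.

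In the constructible case there are two further problems. First, your assertion that ``since this $\uHom$ is free over $R$ \dots the $\Ext$-groups agree with those in the $\Gm$-equivariant case'' invokes $R$-freeness of $\coh^\bullet_\Gm(\cX_s,\bk)$, which is not a hypothesis of this lemma and fails for the $R$-trivial strata the paper most needs (e.g.\ $\bA^1\smallsetminus\{0\}$); and the conclusion is false in general: the complex $\sA\otimes_R\uEnd(\uubk_s)$ has extra cohomology coming from $\ker(\xi)$, and the paper explicitly notes that $\Hom(\uubk_s,\uubk_s[1]\la-2\ra)$ may be nonzero in $\Dmix(\cX_s,\bk)$ although it vanishes in $\Dmix_\Gm(\cX_s,\bk)$. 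What is actually needed is only the vanishing in degrees $m<0$ (and $m=0$, $n\neq0$) together with $\bk$-freeness of the degree-one $\Hom$, obtained because it is a submodule of a free module over the PID $\bk$ --- not agreement with the equivariant answer. Second, you yourself leave the truncation step in $\Dmix(\cX_s,\bk)$ (absorbing the $\bxi\delta_1$ components of the differential) as an acknowledged open point, so even granting the first paragraph the proposal does not complete the proof; in the paper this, too, is absorbed into the hypotheses-plus-citation of \cite[Lemma~A.1]{ahr:ies}, which needs nothing beyond the $\Hom$ conditions checked by direct calculation in the dgg rings $\uEnd(\uubk_s)$ and $\sA\otimes_R\uEnd(\uubk_s)$.
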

\begin{proof}
For brevity, let $\uubk_s = \ubk_{\cX_s}\{\dim X_s\}$.  Let $\sD$ denote either $\Dmix_\Gm(\cX_s,\bk)$ or $\Dmix(\cX_s,\bk)$.  Consider the following two claims:
\begin{enumerate}
\item $\sD$ is generated as a triangulated category by objects of the form $\uubk_s\la n\ra$.\label{it:strat-gen}
\item We have\label{it:strat-hom}
\[
\Hom(\uubk_s, \uubk_s[m]\la n\ra)
= \begin{cases}
0 & \text{if $m < 0$, or if $m = 0$ and $n \ne 0$,} \\
\bk & \text{if $m = n = 0$,} \\
\text{a free $\bk$-module} & \text{if $m = 1$.}
\end{cases}
\]
\end{enumerate}
Claim~\eqref{it:strat-gen} is true by Lemma~\ref{lem:con-gen}.  Claim~\eqref{it:strat-hom} can be checked by direct calculations in the dgg rings $\uEnd(\uubk_s) \cong \coh^\bullet_\Gm(\cX_s,\bk)$ and $\sA \otimes_R \uEnd(\uubk_s) \cong \sA \otimes_R \coh^\bullet_\Gm(\cX_s,\bk)$, recalling that $\coh^\bullet_\Gm(\cX_s,\bk)$ is ``concentrated on the diagonal.''  (Note that in $\Dmix_\Gm(\cX_s,\bk)$, we actually have $\Hom(\uubk_s, \uubk_s[1]\la n\ra) = 0$ for all $n$.  In $\Dmix(\cX_s,\bk)$, the same vanishing holds for $n \ne -2$, but $\Hom(\uubk_s, \uubk_s[1]\la -2\ra)$ may be nonzero.  However, it is still a submodule of the free $\bk$-module $(\sA \otimes_R \coh^\bullet_\Gm(\cX_s,\bk))^1_2$, so it is free over $\bk$.)

Claims~\eqref{it:strat-gen} and~\eqref{it:strat-hom} are precisely the hypotheses of~\cite[Lemma~A.1]{ahr:ies}, which asserts the existence of the desired $t$-structure.
\end{proof}

The $t$-structure constructed in Lemma~\ref{lem:strat-tstruc} will be denoted by
\[
(\p\Dmix_\Gm(\cX_s,\bk)^{\le 0}, \p\Dmix_\Gm(\cX_s,\bk)^{\ge 0}),
\quad\text{resp.}\quad
(\p\Dmix(\cX_s,\bk)^{\le 0}, \p\Dmix(\cX_s,\bk)^{\ge 0}).
\]
In the case of $\Dmix_\Gm(\cX_s,\bk)$, by~\cite[Remark~A.2]{ahr:ies}, the heart of this $t$-structure is equivalent to the category of finitely generated graded $\bk$-modules.

\begin{defn}
For each stratum $\cX_s$, let $j_s: \cX_s \hookrightarrow \cX$ be the inclusion map. The \emph{perverse $t$-structure} on $\Dmix_\Gm(\cX,\bk)$ is the $t$-structure given by
\begin{align*}
\p\Dmix_\Gm(\cX,\bk)^{\le 0} &= \{ \cF \in \Dmix_\Gm(\cX,\bk) \mid \text{$j_s^*\cF \in \p\Dmix_\Gm(\cX_s,\bk)^{\le 0}$ for all $s$} \}, \\
\p\Dmix_\Gm(\cX,\bk)^{\ge 0} &= \{ \cF \in \Dmix_\Gm(\cX,\bk) \mid \text{$j_s^!\cF \in \p\Dmix_\Gm(\cX_s,\bk)^{\ge 0}$ for all $s$} \}.
\end{align*}
The perverse $t$-structure on $\Dmix(\cX,\bk)$ is defined similarly.  
\end{defn}

The fact that these are indeed $t$-structures follows from standard properties of the recollement formalism.

\begin{lem}
The functor $\For: \Dmix_\Gm(\cX,\bk) \to \Dmix(\cX,\bk)$ is $t$-exact.
\end{lem}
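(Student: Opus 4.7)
The plan is to reduce the global claim to a single-stratum statement via the recollement formalism, and then on a stratum to observe that $\For$ sends the generators of the heart of the source perverse $t$-structure to the generators of the heart of the target. I do not expect any real obstacle: the argument is essentially a matter of unwinding definitions, with the only subtle point being the identification of the two sets of generators under $\For$.

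First, by the definition of the perverse $t$-structure on both $\Dmix_\Gm(\cX,\bk)$ and $\Dmix(\cX,\bk)$, an object lies in $\p\sD^{\le 0}$ (resp.\ $\p\sD^{\ge 0}$) iff $j_s^*$ (resp.\ $j_s^!$) of it lies in the corresponding aisle on each stratum $\cX_s$. Proposition~\ref{prop:recolle-commute} yields natural isomorphisms $\For \circ j_s^* \cong j_s^* \circ \For$ and $\For \circ j_s^! \cong j_s^! \circ \For$, so the problem reduces to proving that $\For: \Dmix_\Gm(\cX_s,\bk) \to \Dmix(\cX_s,\bk)$ is $t$-exact for every stratum $\cX_s$.

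Second, fix a stratum $\cX_s$. By Lemma~\ref{lem:strat-tstruc}, the hearts $\cA_\Gm \subset \Dmix_\Gm(\cX_s,\bk)$ and $\cA \subset \Dmix(\cX_s,\bk)$ are the subcategories generated under extensions by the objects~\eqref{eqn:strat-tgen}. The functor $\For$ is built from the ring inclusion $R \hookrightarrow \sA$ and sends any parity sheaf equipped with the zero differential to itself; thus $\For(\uubk_s\la n\ra) = \uubk_s\la n\ra$, and, because $\For$ is triangulated and commutes with cones, $\For(\ubkpi_s\la n\ra) = \ubkpi_s\la n\ra$ as well. Since $\For$ carries extensions to extensions, it follows that $\For(\cA_\Gm) \subseteq \cA$. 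A triangulated functor between categories equipped with bounded $t$-structures that sends heart into heart is automatically $t$-exact (by a short induction on the amplitude of an object, using the truncation triangles and the two-out-of-three property of the aisles). This gives $t$-exactness on each stratum, which combined with the first paragraph completes the proof.
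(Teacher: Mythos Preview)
Your proof is correct and follows exactly the same strategy as the paper: reduce to a single stratum via Proposition~\ref{prop:recolle-commute}, and there invoke Lemma~\ref{lem:strat-tstruc}. The paper simply asserts the single-stratum case is ``immediate'' from that lemma, whereas you have spelled out why (generators go to generators, hence heart to heart, hence $t$-exact).
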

\begin{proof}
If $\cX$ consists of a single stratum, this is immediate from Lemma~\ref{lem:strat-tstruc}.  In the general case, it follows from Proposition~\ref{prop:recolle-commute}.
\end{proof}

\begin{prop}\label{prop:eqvt-ff}
The functor $\For: \Perv_\Gm(\cX,\bk) \to \Perv(\cX,\bk)$ is fully faithful.
\end{prop}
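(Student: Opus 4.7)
The plan is to produce a short exact sequence of morphism complexes comparing $\DGeq{\cX}$ and $\DGcon{\cX}$, pass to cohomology to obtain a long exact sequence relating $\Ext$-groups in $\Dmix_\Gm(\cX,\bk)$ and $\Dmix(\cX,\bk)$, and then invoke the vanishing of negative $\Ext$'s in a perverse $t$-structure.

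Represent $\cF,\cG\in\Perv_\Gm(\cX,\bk)$ by objects $(\cF^0,\delta_\cF),(\cG^0,\delta_\cG)\in\DGeq{\cX}$. Because $\delta_\cF$ and $\delta_\cG$ have no $\bxi$-component, the decomposition $\sA\otimes_R\uHom(\cF^0,\cG^0)\cong\uHom(\cF^0,\cG^0)\oplus\bxi\cdot\uHom(\cF^0,\cG^0)$ is compatible with the differentials. A direct Koszul-sign computation yields
\[
d_\sA(f_0+\bxi f_1)=\bigl(d_\Gm(f_0)+\xi f_1\bigr)+\bxi\cdot\bigl(-d_\Gm(f_1)\bigr),
\]
where $d_\Gm$ denotes the differential of the morphism complex in $\DGeq{\cX}$. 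This gives a short exact sequence of chain complexes
\[
0\to\uHom(\cF^0,\cG^0)\to\sA\otimes_R\uHom(\cF^0,\cG^0)\to\bxi\cdot\uHom(\cF^0,\cG^0)\to 0,
\]
whose quotient is isomorphic to $\uHom(\cF^0,\cG^0)[-1]\la 2\ra$ (with differential $-d_\Gm$, irrelevant for cohomology).

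Taking cohomology in internal degree zero, I would obtain a long exact sequence
\begin{multline*}
\cdots\to\Hom_{\Dmix_\Gm}(\cF,\cG[n-2]\la -2\ra)\xrightarrow{\xi}\Hom_{\Dmix_\Gm}(\cF,\cG[n])\to\Hom_{\Dmix}(\For\cF,\For\cG[n])\to\\
\Hom_{\Dmix_\Gm}(\cF,\cG[n-1]\la -2\ra)\xrightarrow{\xi}\Hom_{\Dmix_\Gm}(\cF,\cG[n+1])\to\cdots,
\end{multline*}
whose connecting map is, by an unwinding of its definition, multiplication by $\xi$ via the natural $R$-action on Hom-spaces. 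The shift functor $\la k\ra$ is $t$-exact for the perverse $t$-structure: by Lemma~\ref{lem:strat-tstruc} it permutes the generators of the heart on each stratum, and by Proposition~\ref{prop:recolle-commute} it commutes with the recollement functors that define the perverse $t$-structure globally. Hence $\cG\la -2\ra$ is perverse whenever $\cG$ is, and for $\cF,\cG\in\Perv_\Gm(\cX,\bk)$ the two flanking terms at $n=0$ vanish as negative $\Ext$'s between objects of the heart. The LES then collapses to the desired isomorphism $\Hom_{\Dmix_\Gm}(\cF,\cG)\simto\Hom_{\Dmix}(\For\cF,\For\cG)$.

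The only real work lies in the Koszul-sign bookkeeping that identifies the differential of $\sA\otimes_R\uHom(\cF^0,\cG^0)$ under the splitting, together with the identification of the connecting homomorphism with $\xi$-multiplication; once these are in hand, the appeal to perversity and $t$-exactness of $\la k\ra$ is formal.
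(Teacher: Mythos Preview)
Your approach is essentially identical to the paper's: both produce the short exact sequence $0 \to \uHom(\cF,\cG) \to \sA \otimes_R \uHom(\cF,\cG) \to \uHom(\cF,\cG)\la 2\ra[-1] \to 0$ of chain complexes, pass to the associated long exact sequence, and kill the flanking terms using perversity together with the $t$-exactness of $\la k\ra$. One harmless slip: since $\bxi$ has bidegree $\binom{1}{2}$, the quotient is $\uHom[-1]\la 2\ra$ and the flanking terms carry $\la 2\ra$ rather than $\la -2\ra$ (consistently with $\xi$ giving a map $\cG\la 2\ra[-2]\to\cG$); this does not affect the argument.
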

\begin{proof}
Note that there is a short exact sequence of $R$-modules
\begin{equation}\label{eqn:eqvt-ses1}
0 \to R \to \sA \to R\la 2\ra[-1] \to 0.
\end{equation}
If we equip $\sA$ with the differential $\kappa$, and $R$ and $R\la 2\ra[-1]$ with the zero differential, then this is actually a short exact sequence of chain complexes, or of dgg $R$-modules.

Now let $\cF, \cG \in \Dmix_\Gm(\cX,\bk)$. All three terms in~\eqref{eqn:eqvt-ses1} are flat over $R$, so if we apply $({-}) \otimes_R \uHom(\cF,\cG)$, we get a short exact sequence of chain complexes
\[
0 \to \uHom(\cF,\cG) \xrightarrow{\For} \sA \otimes_R \uHom(\cF,\cG) \to \uHom(\cF,\cG)\la 2\ra[-1] \to 0.
\]
Now take the long exact sequence in cohomology.  Part of this sequence is
\begin{multline*}
\cdots \to \Hom(\cF,\cG\la 2\ra[-2]) \to
\Hom(\cF,\cG) \to \Hom(\For(\cF),\For(\cG)) \\
\to \Hom(\cF,\cG\la 2\ra[-1]) \to \cdots.
\end{multline*}
If $\cF$ and $\cG$ are perverse, the first and last terms above vanish. We conclude that $\Hom(\cF,\cG) \to \Hom(\For(\cF),\For(\cG))$ is an isomorphism.
\end{proof}

\section{Constructibility}
\label{sec:rfree-constr}

We have seen in Example~\ref{ex:mon-der1} that $\Dmix_\mon(\cX,\bk)$ may contain objects ``of infinite type,'' i.e., for which certain graded $\Hom$-spaces may not be finitely generated over $\bk$.  We will see more such examples in~\S\ref{sec:jordan}, in the form of infinite-rank pro-unipotent local systems.  To distinguish these ``large'' objects from more manageable ones, we make the following definition.

\begin{defn}
An object $\cF \in \Dmix_\mon(\cX,\bk)$ is said to be \emph{constructible} if it lies in the essential image of $\Mon: \Dmix(\cX,\bk) \to \Dmix_\mon(\cX,\bk)$. 
\end{defn}

\begin{lem}\label{lem:constr-recolle}
Let $i: \cZ \hookrightarrow \cX$ be the inclusion of a closed union of strata, and let $j: \cU \hookrightarrow \cX$ be the complementary open inclusion.  For $\cF \in \Dmix_\mon(\cX,\bk)$, the following conditions are equivalent:
\begin{enumerate}
\item $\cF$ is constructible.\label{it:constr-tot}
\item $j^*\cF$ and $i^*\cF$ are constructible.\label{it:constr-*}
\item $j^*\cF$ and $i^!\cF$ are constructible.\label{it:constr-!}
\end{enumerate}
\end{lem}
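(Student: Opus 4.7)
The plan is to reduce everything to the fact that $\Mon$ is fully faithful (Proposition~\ref{prop:mon-ff}), triangulated (Lemma~\ref{lem:mon-tri}), and commutes with the six recollement functors (Proposition~\ref{prop:recolle-commute}).

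For the implications \eqref{it:constr-tot}$\Rightarrow$\eqref{it:constr-*} and \eqref{it:constr-tot}$\Rightarrow$\eqref{it:constr-!}, I would write $\cF \cong \Mon(\cG)$ for some $\cG \in \Dmix(\cX,\bk)$ and invoke Proposition~\ref{prop:recolle-commute} directly to get $j^*\cF \cong \Mon(j^*\cG)$, $i^*\cF \cong \Mon(i^*\cG)$, and $i^!\cF \cong \Mon(i^!\cG)$. Since the image of $\Mon$ is the definition of constructibility, there is nothing more to say.

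For the converse \eqref{it:constr-*}$\Rightarrow$\eqref{it:constr-tot}, I would use the recollement distinguished triangle
\[
j_!j^*\cF \to \cF \to i_*i^*\cF \to
\]
from Theorem~\ref{thm:recollement}. By hypothesis, we can pick lifts $j^*\cF \cong \Mon(\cG_\cU)$ and $i^*\cF \cong \Mon(\cG_\cZ)$. Applying Proposition~\ref{prop:recolle-commute} again (to $j_!$ and to $i_*$), we obtain
\[
j_!j^*\cF \cong \Mon(j_!\cG_\cU), \qquad i_*i^*\cF \cong \Mon(i_*\cG_\cZ).
\]
Rotating the triangle, $\cF$ is the cone of some morphism $\varphi : i_*i^*\cF[-1] \to j_!j^*\cF$ between objects in the essential image of $\Mon$. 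Since $\Mon$ is fully faithful and triangulated, this morphism lifts uniquely to a morphism $\tilde\varphi : i_*\cG_\cZ[-1] \to j_!\cG_\cU$ in $\Dmix(\cX,\bk)$, and $\Mon(\cone(\tilde\varphi)) \cong \cone(\varphi) \cong \cF$, establishing constructibility. The implication \eqref{it:constr-!}$\Rightarrow$\eqref{it:constr-tot} is identical, using the dual triangle $i_*i^!\cF \to \cF \to j_*j^*\cF \to$ and the fact that $\Mon$ commutes with $j_*$ and $i_*$.

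There is no real obstacle here; the only point requiring any care is that the lifting of $\varphi$ through $\Mon$ uses full faithfulness applied to the group $\Hom(\Mon(i_*\cG_\cZ[-1]), \Mon(j_!\cG_\cU))$, and this is precisely what Proposition~\ref{prop:mon-ff} provides. The overall structure is the standard observation that the essential image of a fully faithful triangulated functor is a strictly full triangulated subcategory, combined with the fact that the recollement functors preserve this subcategory.
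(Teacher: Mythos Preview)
Your proof is correct and follows essentially the same route as the paper's: both use Proposition~\ref{prop:recolle-commute} for the forward implications, and for the converse both place $\cF$ in the recollement triangle, observe that the outer terms lie in the essential image of $\Mon$, and then use full faithfulness of $\Mon$ (Proposition~\ref{prop:mon-ff}) together with its triangulated structure to conclude that $\cF$ itself is constructible. The only cosmetic difference is that the paper phrases the last step as $\cF$ being the cocone of the connecting morphism $i_*i^*\cF \to j_!j^*\cF[1]$, whereas you rotate and take a cone.
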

\begin{proof}
Proposition~\ref{prop:recolle-commute} tells us that condition~\eqref{it:constr-tot} implies the other two conditions.

Let us now show that condition~\eqref{it:constr-*} implies condition~\eqref{it:constr-tot}.  If $j^*\cF$ and $i^*\cF$ are constructible, Proposition~\ref{prop:recolle-commute} implies that $j_!j^*\cF$ and $i_*i^*\cF$ are as well.  That is, the first and third terms of the distinguished triangle
\[
j_!j^*\cF \to \cF \to i_*i^*\cF \to
\]
lie in the essential image of $\Mon$.  Since $\Mon$ is fully faithful, the connecting morphism $i_*i^*\cF \to j_!j^*\cF[1]$ is in the image of $\Mon$, and hence so is its cocone $\cF$.

The proof that condition~\eqref{it:constr-!} implies condition~\eqref{it:constr-tot} is similar.
\end{proof}

The main result of this section is that for an $R$-free variety, all monodromic complexes are constructible.  Most of the work is spent on the case of a single stratum.

\begin{prop}\label{prop:constr-1strat}
Suppose $\cX_s$ is an $R$-free stratum. Then $\Dmix_\mon(\cX,\bk)$ is generated as a triangulated category by objects of the form $\Mon(\cE_s)\la n\ra$.
\end{prop}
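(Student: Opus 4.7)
The plan is to split the argument in two. First I would reduce to showing essential surjectivity of the monodromy functor $\Mon\colon\Dmix(\cX_s,\bk)\to\Dmix_\mon(\cX_s,\bk)$; then I would establish essential surjectivity by induction on the size of the underlying graded parity sheaf.

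For the reduction, Lemma~\ref{lem:con-gen} tells us that $\Dmix(\cX_s,\bk)$ is generated as a triangulated category by parity sheaves (with zero differential), and on a single stratum every indecomposable parity sheaf is a shift of $\cE_s$; hence $\Dmix(\cX_s,\bk)$ is generated by $\cE_s\la n\ra$ for $n\in\Z$. Since $\Mon$ is triangulated (Lemma~\ref{lem:mon-tri}) and fully faithful (Proposition~\ref{prop:mon-ff}), its essential image is a triangulated subcategory of $\Dmix_\mon(\cX_s,\bk)$ generated by the $\Mon(\cE_s)\la n\ra$. It therefore suffices to show every object of $\Dmix_\mon(\cX_s,\bk)$ lies in the essential image of $\Mon$.

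For essential surjectivity, given $(\cF,\delta)\in\DGmon{\cX_s}$, the $R$-freeness of $\cX_s$ yields an identification $\sS\otimes_R\uEnd(\cF)\cong R^\vee\otimes_\bk\uEnd(\cF)$ of bigraded $\bk$-modules (using $\sS=R^\vee\otimes R$), allowing me to expand $\delta=\sum_{k\geq 0}\sr^k\delta_k$ with $\delta_k\in\uEnd(\cF)^1_{2k}$. The cocycle equation $\delta^2=\Theta=\sr\xi\cdot\id$ unpacks into $\delta_0^2=0$, $\delta_0\delta_1+\delta_1\delta_0=\xi\cdot\id$, and further quadratic identities in the $\delta_k$. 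Pick $n$ maximal with $\cF^n\neq 0$; then $\delta_0|_{\cF^n[-n]}=0$ for degree reasons, so the restriction of the second relation reads $\delta_0\circ(\delta_1|_{\cF^n[-n]})=\xi\cdot\id$---which is exactly the defining relation of the differential on $\Mon(\cE_s)$. Tracking an indecomposable summand of $\cF^n$ together with its image under $\delta_1$ inside $\cF^{n-1}$ should single out a shifted copy of $\Mon(\cE_s)$ embedded in $(\cF,\delta)$, producing a distinguished triangle
\[
\Mon(\cE_s)\la m\ra \to (\cF,\delta) \to (\cG,\delta_\cG) \to
\]
for some $m\in\Z$ in which $(\cG,\delta_\cG)$ has strictly fewer indecomposable summands than $(\cF,\delta)$. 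The induction then closes by applying the hypothesis to $(\cG,\delta_\cG)$.

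The hard part will be making this splitting off of $\Mon(\cE_s)$ rigorous at the dgg level. One must exhibit an explicit chain map $\Mon(\cE_s)\la m\ra\to(\cF,\delta)$ and construct the complement $(\cG,\delta_\cG)$ so that its induced differential again squares to $\Theta$. Handling the higher coefficients $\delta_k$ for $k\geq 2$ and checking that they descend compatibly to $\cG$ will require the full system of cocycle relations, and likely a careful change of basis within $\cF^n$ and $\cF^{n-1}$ to cleanly isolate the target copy of $\Mon(\cE_s)$.
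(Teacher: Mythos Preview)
Your reduction to essential surjectivity of $\Mon$ and the overall strategy of induction on the rank of the underlying graded parity sheaf are correct and match the paper's plan. The gap is in the inductive step: it is \emph{not} always possible to split off a copy of $\Mon(\cE_s)\la m\ra$.

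Take $\sH_s=\coh^\bullet_\Gm(\cX_s,\bk)=R$ and consider the rank-two object $(\cF,\delta)$ with $\cF^n=\cE_s$, $\cF^{n-1}=\cE_s$ (same $\{\}$-twist), all other $\cF^i=0$, with $\delta_0\colon\cF^{n-1}\to\cF^n$ the identity and $\delta_1\colon\cF^n\to\cF^{n-1}$ multiplication by $\xi$. One checks $\delta^2=\Theta$, and your relation $\delta_0\delta_1=\xi\cdot\id_{\cF^n}$ holds. But the image of $\delta_1$ is $\xi\cE_s\subset\cE_s$, which is not a direct summand, and the pair $(\cF^n,\cF^{n-1})$ carries the differential $\left[\begin{smallmatrix}0&1\\\sr\xi&0\end{smallmatrix}\right]$ rather than $\left[\begin{smallmatrix}0&\sr\\\xi&0\end{smallmatrix}\right]$. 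This object is contractible (contracting homotopy $\left[\begin{smallmatrix}0&0\\1&0\end{smallmatrix}\right]$), so it does lie in the essential image of $\Mon$, but it is not isomorphic to any $\Mon(\cE_s)\la m\ra$: the two generators of $\Mon(\cE_s)\la m\ra$ always differ by a $\{2\}$-twist, whereas here they do not. Your triangle simply does not exist for this object.

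The paper organises the induction differently: it picks a basis element $a_1$ of maximal \emph{total} degree (cohomological minus internal), which again forces $\delta(a_1)\in\sr(R^\vee\otimes M)$, and then runs a genuine two-case analysis on the coefficients of $\sr^{-1}\delta(a_1)$. If a certain leading coefficient is a unit, one splits off a $\Mon(\cE_s)$; if not, one locates instead a contractible rank-two summand exactly of the shape above. Either way the rank drops by two. Your filtration by cohomological degree could in principle be pushed through with a parallel dichotomy, but the proposal as written assumes the first case always occurs, and that is false.
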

\begin{proof}
In the case where $\bk$ is not a field, let $\varpi$ be a generator of its unique maximal ideal.  The text of the proof below is adapted to this case.  The case where $\bk$ is a field is slightly easier.  To obtain the proof in the field case, read the argument below with the understanding that $\varpi = 0$.  (There are some additional comments on the field case after the end of the proof.)

We introduce some additional notation related to bigraded $\bk$-modules.  For a homogeneous element $m$ in a bigraded $\bk$-module $M$ of bidegree $\binom{e}{f}$, recall that the cohomological degree is given by $|m| = e$.  We define its \emph{total degree} by $\tot m = e - f$.

Let $\sH_s = \coh^\bullet_\Gm(\cX_s,\bk)$, and let $\Free(\sH_s)$ denote the category of finitely generated bigraded free $\sH_s$-modules. It is easy to see that the functor $\uHom(\cE_s,{-})$ gives rise to an equivalence of categories
\[
\uHom(\cE_s,{-}): \PargrGm{\cX_s} \simto \Free(\sH_s).
\]
For the rest of this section, we identify these categories.  Thus, the categories $\Dmix_\Gm(\cX_s,\bk)$, $\Dmix(\cX_s,\bk)$, and $\Dmix_\mon(\cX_s,\bk)$ all consist of objects of $\Free(\sH_s)$ with additional data.  In this language, an object of $\Dmix_\mon(\cX,\bk)$ is a pair $(M,\delta)$, where $M \in \Free(\sH_s)$, and $\delta: R^\vee \otimes M \to R^\vee \otimes M[1]$ is an $R^\vee \otimes \sH_s$-module homomorphism satisfying $\delta^2 = \Theta \cdot \id$.

Given such an object $(M,\delta)$, we wish to prove that it lies in the subcategory generated by objects of the form
\[
\Mon(\cE_s)\la n\ra =
\left(
\sH_s\la n\ra \oplus \sH_s\la n -2\ra[1], [\begin{smallmatrix} & \sr \\ \xi & \end{smallmatrix}]\right).
\]
We proceed by induction on the rank of $M$ (as a free $\sH_s$-module).  Of course, if $M$ has rank $0$, there is nothing to prove.

If $M$ has rank${}> 0$, the argument is lengthy but elementary.  We will set up quite a lot of notation related to a basis for $M$.  We will then consider two different cases involving the behavior of $\delta$ in this basis.

{\em Step 1. Set-up and notation.}
Choose a homogeneous $\sH_s$-basis $a_1, \ldots, a_n$ for $M$, and assume that $a_1$ has maximal total degree among these basis elements.  Let $t = \tot a_1$ and $b = |a_1|$. Write the differential of $a_1$ in terms of this basis as
\begin{equation}\label{eqn:constr-rev0}
\delta(a_1) = \sum_{i=1}^n c_i a_i
\quad\text{where}\quad
c_i \in R^\vee \otimes \sH_s
\quad\text{and}\quad
\left\{
\begin{aligned}
|c_i| + |a_i| &= b + 1, \\
\tot c_i + \tot a_i &= t + 1.
\end{aligned}
\right.
\end{equation}
Now, all elements of $R^\vee \otimes \sH_s$ have even, nonnegative cohomological and total degrees.  Since $t \ge \tot a_i$ for all $i$, we see that in any term with $c_i \ne 0$, we must have $\tot c_i \ge 2$, and hence
\[
\begin{aligned}
|a_i| &\le b + 1\\
|a_i| &\equiv b + 1 \pmod 2
\end{aligned}
\qquad\text{and}\qquad
\begin{aligned}
\tot a_i &\le t - 1 \\
\tot a_i &\equiv t -1 \pmod 2.
\end{aligned}
\]
Assume that the basis elements meeting these degree conditions are $a_2, \ldots, a_d$. (Of course, $a_1$ cannot satisfy these conditions.)  

Since $\sH_s$ is concentrated in total degree $0$, we see that any element $c \in R^\vee \otimes \sH_s$ with positive total degree must be divisible by $\sr$.  This applies to every nonzero coefficient in~\eqref{eqn:constr-rev0}.  Factor these coefficients as
\[
c_i = \sr c'_i,
\]
where $c'_i \in R^\vee \otimes \sH_s$ satisfies $|c'_i| = |c_i|$ and $\tot c'_i = \tot c_i - 2$.

Break up~\eqref{eqn:constr-rev0} as
\[
\delta(a_1) = \sr D_1 + \sr D_2 + \sr D_3 + \sr D_4
\]
where
\[
D_1 = \sneg\sum_{\substack{2 \le i \le d\\ |a_i| = b+1 \\ \tot a_i = t - 1}}\sneg c'_i a_i
\qquad
D_2 = \sneg\sum_{\substack{2 \le i \le d\\ |a_i| = b-1 \\ \tot a_i = t - 1}}\sneg c'_i a_i
\qquad
D_3 = \sneg\sum_{\substack{2 \le i \le d\\ |a_i| \le b-3 \\ \tot a_i = t - 1}}\sneg  c'_i a_i
\qquad
D_4 = \sneg\sum_{\substack{2 \le i \le d\\ \tot a_i \le t - 3}}\sneg c'_i a_i
\]
The coefficients $c'_i$ in $D_4$ still have positive total degree, so we have
\[
D_4 \in \sr(R^\vee \otimes M).
\]
On the other hand, every nonzero coefficient $c'_i$ ocurring in $D_1$, $D_2$, or $D_3$ has total degree $0$, and thus lies in $\sH_s$.  Since $\sH_s$ is free as an $R$-module, we can decompose it as an $R$-module as $\sH_s \cong R \oplus C$.  Let
\[
\fa = C \oplus \xi^2R.
\]
Note that as a $\bk$-module, we have $\sH_s \cong \bk \oplus \bk\xi \oplus \fa$.  It is easy to see from this that $\fa$ is actually an ideal in $\sH_s$.  All homogeneous elements of cohomological degree${}\ge 4$ lie in $\fa$.  The coefficients in $D_3$ have cohomological degree${}\ge 4$, so
\[
D_3 \in \fa(R^\vee \otimes M).
\]
In $D_2$, the coefficients $c'_i$ belong to $\sH^2_s = \coh^2_\Gm(\cX_s,\bk)$, which is homogeneous of bidegree $\binom{2}{2}$.  We have $\sH^2_s = \bk\xi \oplus (\fa \cap \sH^2_s)$, so we can rewrite $D_2$ as
\[
D_2 = D_2' + D_2''
\]
where
\[
D_2' = \sneg\sum_{\substack{2 \le i \le d\\ |a_i| = b-1 \\ \tot a_i = t - 1}}\sneg b'_i\xi a_i,
\quad b'_i \in \bk
\qquad\text{and}\qquad
D_2'' = \sneg\sum_{\substack{2 \le i \le d\\ |a_i| = b-1 \\ \tot a_i = t - 1}}\sneg b''_i a_i,
\quad b''_i \in \fa \cap \sH^2_s.
\]
In particular, we have
\[
D_2'' \in \fa(R^\vee \otimes M).
\]
Lastly, in $D_1$, the coefficients $c'_i \in \sH_s$ have cohomological degree $0$, so they belong to $\bk$.  We are now done setting up the needed notation.

{\it Step 2. Assume that some coefficient in $D_1$ is a unit in $\bk$.}
Without loss of generality, suppose $c'_2$ is a unit.  Let
\[
a'_2 = \sum_{i=2}^d c'_i a_i = \sr^{-1}\delta(a_1).
\]
Because $c'_2$ is a unit in $\bk$, the set $\{a_1, a_2', a_3, \ldots, a_n \}$ is still a basis for $M$.  In this new basis, we have $\delta(a_1) = \sr a_2'$, and hence $\delta^2(a_1) = \sr\xi a_1 = \sr \delta(a_2')$.  Since multiplication by $\sr$ is an injective map, it follows that $\delta(a_2') = \xi a_1$.

Let $M' = \spn_{\sH_s} \{a_3, a_4, \ldots, a_n \}$, and decompose $M$ as
\[
M = \sH_s a_2' \oplus \sH_s a_1 \oplus M'.
\]
The observations above show that with respect to this decomposition, the differential $\delta$ can be written in the form
\[
\delta =
\begin{bmatrix}
& \sr & f_2' \\
\xi & & f_1 \\
& & \delta'
\end{bmatrix}
\]
for suitable maps $f_2'$, $f_1$, and $\delta'$.  We see that $(M,\delta)$ is the cone of the map
\[
f = \begin{bmatrix} f_2' \\ f_1 \end{bmatrix}:
(M'[-1], -\delta') \to \Mon(\cE_s[-b-1]\la b - t + 2\ra).
\]
Since $M'$ has smaller rank than $M$ as an $\sH_s$-module, we are done.

{\it Step 3. Assume that no coefficient in $D_1$ is a unit in $\bk$.}
In this case, every coefficient $c'_i$ in $D_1$ belongs to the maximal ideal of $\bk$, and hence
\[
D_1 \in \varpi(R^\vee \otimes M).
\]
Now let
\[
a_2' = \xi^{-1}D_2' = \sneg \sum_{\substack{2 \le i \le d\\ |a_i| = b-1 \\ \tot a_i = t - 1}}\sneg b'_i a_i
\qquad\text{and}\qquad
a'_1 = \delta(a_2').
\]
so that $\xi a'_1 = \delta(D_2')$.  Expand $a'_1$ in our basis as
\[
a'_1 = \sum_{i=1}^n e_i a_i
\qquad\text{with}\qquad
e_i \in R^\vee \otimes \sH_s.
\]
Note that $\deg a'_1 = \deg a_1$, and hence that $\deg e_1 = \binom{0}{0}$.  In other words, $e_1$ is an element of $\bk$.  We claim that it is invertible.  If it were not invertible, we would have
\[
a'_1 \in \varpi(R^\vee \otimes M) + \spn_{R^\vee \otimes \sH_s}\{a_2,\ldots, a_n\},
\]
and hence that
\begin{align*}
\delta^2(a_1) &= \sr(\delta(D_1) + \delta(D_2') + \delta(D_2'') + \delta(D_3) + \delta(D_4)) \\
&= \sr(\delta(D_1) + \xi a'_1 + \delta(D_2'') + \delta(D_3) + \delta(D_4)) \\
&\in \sr \varpi (R^\vee \otimes M) + \sr \fa (R^\vee \otimes M) + \sr^2 (R^\vee \otimes M) + \spn_{R^\vee \otimes \sH_s}\{a_2,\ldots, a_n\}.
\end{align*}
But $\delta^2(a_1) = \sr\xi a_1$ does not belong to the right-hand side, and we have contradiction.  Thus, $e_1$ is a unit.  It follows that $\{a'_1, a_2, \ldots, a_n\}$ is still a basis for $M$.

Next, we claim that some coefficient $b'_i$ in $a_2'$ is a unit in $\bk$.  If not, we would have $a_2' \in \varpi M$, and hence $a'_1 \in \varpi M$ as well, contradicting the fact that $a'_1$ is part of a basis for $M$.  Assume without loss of generality that $a_2$ occurs in $a_2'$, and that $b'_2$ is a unit. We conclude that $\{a'_1, a_2', a_3, \ldots, a_n \}$ is still a basis for $M$.

Let $M' = \spn_{\sH_s} \{a_3, a_4, \ldots, a_n \}$, and decompose $M$ as
\[
M = \sH_s a_1' \oplus \sH_s a_2' \oplus M'.
\]
Note that $\delta(a_1') = \delta^2(a_2') = \sr\xi a_2'$.  Thus, with respect to this decomposition, the differential $\delta$ can be written in the form
\[
\delta =
\begin{bmatrix}
& 1 & g_1' \\
\sr\xi & & g_2' \\
& & \delta'
\end{bmatrix}
\]
Let
\[
(M_0,\delta_0) =
\left(
\sH_s[-b]\la b-t\ra \oplus \sH_s[-b+1]\la b-t\ra, 
\begin{bmatrix}
& 1 \\ \sr\xi & 
\end{bmatrix}
\right).
\]
As in the previous case, we see that $(M,\delta)$ is the cone of the map
\[
g = \begin{bmatrix} g_1' \\ g_2' \end{bmatrix}:
(M'[-1], -\delta') \to (M_0,\delta_0).
\]
We claim that $(M_0,\delta_0)$ is homotopic to $0$.  Indeed, we have
\[
\id_{M_0} = \delta_0 h + h \delta_0
\qquad\text{where}\qquad
h = 
\begin{bmatrix}
0 & 0 \\ 1 & 0 
\end{bmatrix}.
\]
We conclude that $(M,\delta)$ is isomorphic in $\Dmix_\mon(\cX_s,\bk)$ to $(M', \delta')$.  Since $M'$ has smaller rank than $M$ as an $\sH_s$-module, we are done.
\end{proof} 

In the case where $\bk$ is a field, the headings of Steps~2 and~3 of the preceeding proof can be simplified to ``Assume that $D_1 \ne 0$'' and ``Assume that $D_1 = 0$,'' respectively, and the second paragraph of Step~3 discusses the claim that $a_2' \ne 0$.

\begin{thm}\label{thm:rfree-mon}
If $\cX$ is $R$-free, then the monodromy functor
\[
\Mon: \Dmix(\cX,\bk) \to \Dmix_\mon(\cX,\bk)
\]
is an equivalence of categories.
\end{thm}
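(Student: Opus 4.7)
The plan is to reduce the theorem to previously established results. Proposition~\ref{prop:mon-ff} already shows that $\Mon$ is fully faithful, so it remains only to establish essential surjectivity; equivalently, I must show that every object of $\Dmix_\mon(\cX,\bk)$ is constructible in the sense of Section~\ref{sec:rfree-constr}.

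The strategy is induction on the number of strata of $\cX$, using recollement. I first observe that $R$-freeness is a stratum-by-stratum condition, so every locally closed union of strata in $\cX$ is again $R$-free; in particular, both the inductive step's open complement $\cU$ and its closed stratum $\cX_s$ remain $R$-free. For the base case, where $\cX$ consists of a single stratum $\cX_s$ (which is $R$-free by hypothesis), Proposition~\ref{prop:constr-1strat} asserts that $\Dmix_\mon(\cX_s,\bk)$ is generated as a triangulated category by the objects $\Mon(\cE_s)\la n\ra$ for $n \in \Z$. Since the essential image of the triangulated functor $\Mon$ is closed under shifts, cones, and isomorphism, every object of $\Dmix_\mon(\cX_s,\bk)$ is constructible.

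For the inductive step, choose a closed stratum $\cX_s \subset \cX$, set $\cU = \cX \smallsetminus \cX_s$, and let $i: \cX_s \hookrightarrow \cX$ and $j: \cU \hookrightarrow \cX$ denote the inclusions. Both $\cX_s$ and $\cU$ are $R$-free with strictly fewer strata than $\cX$, so the inductive hypothesis tells us that every object of $\Dmix_\mon(\cX_s,\bk)$ and $\Dmix_\mon(\cU,\bk)$ is constructible. Given any $\cF \in \Dmix_\mon(\cX,\bk)$, the restrictions $j^*\cF$ and $i^*\cF$ are therefore constructible, and Lemma~\ref{lem:constr-recolle} then forces $\cF$ itself to be constructible. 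This completes the induction.

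The genuinely hard content --- the basis-manipulation argument that analyzes an arbitrary differential $\delta$ with $\delta^2 = \Theta \cdot \id$ on a free $\sH_s$-module --- has already been absorbed into Proposition~\ref{prop:constr-1strat}. Thus the main obstacle is only to confirm the inheritance of $R$-freeness by strata subsets, which is immediate, and to verify that the recollement criterion of Lemma~\ref{lem:constr-recolle} can be iterated; the theorem is then a purely formal bootstrap from the single-stratum case combined with Proposition~\ref{prop:mon-ff}.
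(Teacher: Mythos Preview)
Your proof is correct and follows essentially the same approach as the paper: reduce to essential surjectivity via Proposition~\ref{prop:mon-ff}, handle the single-stratum base case with Proposition~\ref{prop:constr-1strat}, and for the inductive step peel off a closed stratum and invoke Lemma~\ref{lem:constr-recolle}. Your added remarks about inheritance of $R$-freeness and closure of the essential image under cones are correct elaborations of points the paper leaves implicit.
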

\begin{proof}
Since $\Mon$ is fully faithful, we just need to show that it is essentially surjective. We proceed by induction on the number of strata.  If $\cX$ consists of a single stratum, this follows from Proposition~\ref{prop:constr-1strat}.  Otherwise, let $i: \cX_s \hookrightarrow \cX$ be the inclusion of a closed stratum, and let $j: \cX \smallsetminus \cX_s \hookrightarrow \cX$ be the inclusion of its complement.  Consider the distinguished triangle
\[
j_!j^*\cF \to \cF \to i_*i^*\cF \to.
\]
By induction, $j^*\cF$ and $i^*\cF$ both lie in the image of $\Mon$, and then by Lemma~\ref{lem:constr-recolle}, so does $\cF$.
\end{proof}

\section{Jordan block local systems}
\label{sec:jordan}

\subsection{Motivation: the punctured plane}

The set-up of the paper includes the assumption that every $\Gm$-equivariant local system on each $\cX_s$ is trivial.  But if we drop the $\Gm$-equivariance, there may well be nontrivial local systems.  A motivating example is the case $\cX = \bA^1 \smallsetminus \{0\}$, equipped with the standard action of $\Gm$. The constant sheaf $\ubk_{\bA^1 \smallsetminus \{0\}}$ admits no $\Gm$-equivariant self-extensions, but if we drop the $\Gm$-equivariance, then it does admit self-extensions.  The local systems obtained by repeated self-extensions of the constant sheaf are those with unipotent monodromy.

The analogue of this phenomenon in the mixed modular setting is based on the fact that 
\begin{equation}\label{eqn:a1-ext1}
\Hom_{\Dmix(\bA^1 \smallsetminus \{0\},\bk)}(\ubk,\ubk\la-2\ra[1]) \cong \bk.
\end{equation}
It is possible to prove this directly in $\Dmix(\bA^1 \smallsetminus \{0\},\bk)$.  Below, we will see how to deduce this from a computation in $\Dmix_\mon(\bA^1 \smallsetminus \{0\},\bk)$.

Since $\coh^\bullet_\Gm(\bA^1 \smallsetminus \{0\},\bk) = \bk$, the category $\PargrGm{\bA^1 \smallsetminus \{0\}}$ is equivalent to the category of finite-rank bigraded free $\bk$-modules.  For any object $\cF \in \PargrGm{\bA^1 \smallsetminus \{0\}}$, we have $\Theta \cdot \id_\cF = 0$.  In this language, an object of $\DGmon{\bA^1 \smallsetminus \{0\}}$ is a pair $(M,\delta)$, where $M$ is a finite-rank free bigraded $\bk$-module, and $\delta : R^\vee \otimes M \to R^\vee \otimes M[1]$ is an $R^\vee$-linear map such that $\delta^2 = 0$.

In other words, $\DGmon{\bA^1 \smallsetminus \{0\}}$ is simply the dgg category whose objects are bounded chain complexes of finitely generated free $R^\vee$-modules.  Let $R^\vee\lgmod$ denote the abelian category of finitely generated graded $R^\vee$-modules.  We can then identify
\begin{equation}\label{eqn:mon-a1}
\Dmix_\mon(\bA^1 \smallsetminus \{0\},\bk) \cong
\Db(R^\vee\lgmod).
\end{equation}
Under this equivalence, the free module $R^\vee$ corresponds to the shifted constant sheaf $\ubk\{1\}$ on $\bA^1 \smallsetminus \{0\}$.  For brevity, let us denote this parity sheaf by
\[
\uubk = \ubk_{\bA^1 \smallsetminus \{0\}}\{1\}.
\]

Let us write down some specific objects $\Dmix_\mon(\bA^1 \smallsetminus \{0\},\bk)$.  For $n \ge 1$, let $\cJ_n(\uubk) \in \Dmix_\mon(\bA^1 \smallsetminus \{0\},\bk)$ be the object given by
\[
\cJ_n(\uubk) =
\begin{tikzcd}[column sep=large]
\uubk\la -2n\ra[1] \ar[r, "{\sst[1]}" description, "\sr^n \cdot \id" above=2] &
\uubk
\end{tikzcd}
.
\]
Note that $\cJ_1(\uubk) \cong \Mon(\uubk)$.  Under~\eqref{eqn:mon-a1}, the chain complex $\cJ_n(\uubk)$ becomes a free resolution of the $R^\vee$-module $\bk[\sr]/(\sr^n)$.  For every $n \ge 1$, there is a nonsplit distinguished triangle
\[
\cJ_n(\uubk)\la -2\ra \xrightarrow{\sr} \cJ_{n+1}(\uubk) \to \Mon(\uubk) \to,
\]
corresponding to the short exact sequence of $R^\vee$-modules $0 \to \bk[\sr]/(\sr^n)\la -2\ra \to \bk[\sr]/(\sr^{n+1}) \to \bk \to 0$.  In particular, when $n = 1$, we get a nonsplit distinguished triangle
\[
\Mon(\uubk)\la -2\ra \xrightarrow{\sr} \cJ_2(\uubk) \to \Mon(\uubk) \to.
\]
This shows that $\Hom(\Mon(\uubk), \Mon(\uubk)\la -2\ra[1])$ is at least nonzero.  To see that it is isomorphic to $\bk$ (as was claimed in~\eqref{eqn:a1-ext1}), observe that $\Ext^1_{R^\vee}(\bk, \bk\la -2\ra) \cong \bk$.
  
Morally, $\cJ_n(\uubk)$ should be thought of as a local system whose monodromy is a unipotent Jordan block of size $n$.  These objects clearly lie in the image of $\Mon$, so ``unipotent Jordan block local systems'' exist in $\Dmix(\bA^1 \smallsetminus \{0\},\bk)$ as well.

But in $\Dmix_\mon(\bA^1 \smallsetminus \{0\},\bk)$, it also makes to consider the object
\[
\cJ(\uubk)
\]
whose underlying graded parity sheaf is just $\uubk$, equipped with the zero differential.  The graded endomorphism ring of this object is infinitely generated over $\bk$: we have
\[
\bigoplus_{n \in \Z} \Hom(\cJ(\uubk), \cJ(\uubk)\la n\ra) \cong \uEnd(\cJ(\uubk)) \cong R^\vee.
\]
Remark~\ref{rmk:hom-nilp} implies that $\cJ(\uubk)$ does not lie in the image of $\Mon$.

Morally, $\cJ(\uubk)$ should be thought of as an infinite-rank local system; it is the pro-unipotent projective cover of the constant sheaf $\Mon(\uubk)$ in the category of local systems.

\subsection{Jordan block functors}

We will now generalize the construction of the preceding subsection to an arbitrary $R$-trivial variety.  As above, for any parity sheaf $\cF$ on an $R$-trivial variety, we have $\Theta \cdot \id_\cF = 0$.  Thus, objects of $\DGmon{\cX}$ or $\Dmix_\mon(\cX,\bk)$ are pairs $(\cF, \delta)$ where $\delta \in \uEnd_\sS(\cF)$ satisfies $\delta^2 = 0$.

Suppose now that $(\cF,\delta)$ is an object of $\DGeq{\cX}$.  The obvious map $\uEnd(\cF) \to \uEnd_\sS(\cF)$ induced by $\iota_{R^\vee}: \bk \to R^\vee$ lets us regard $(\cF,\delta)$ as an object of $\DGmon{\cX}$.  We have just defined a functor $\cJ: \DGeq{\cX} \to \DGmon{\cX}$.  Passing to homotopy categories, we obtain a functor
\[
\cJ: \Dmix_\Gm(\cX,\bk) \to \Dmix_\mon(\cX,\bk).
\]

Next, for any $n \ge 1$, define $\cJ_n:  \DGeq{\cX} \to \DGmon{\cX}$ by
\[
\cJ_n(\cF,\delta) = 
\begin{tikzcd}[column sep=large]
\cF\la -2n\ra[1] \ar[loop above, out=120, in=60, distance=20, "{\sst[1]}"{description, pos=0.18}, "-\delta"]
  \ar[r, "{\sst[1]}" description, "\sr^n" above=2] &
\cF \ar[loop above, out=120, in=60, distance=20, "{\sst[1]}"{description, pos=0.18}, "\delta"]
\end{tikzcd}
.
\]
Again, passing to homotopy categories, we obtain a functor 
\[
\cJ_n: \Dmix_\Gm(\cX,\bk) \to \Dmix_\mon(\cX,\bk).
\]
Comparing with~\eqref{eqn:mon-concrete}, we see that there is a natural isomorphism
\[
\cJ_1(\cF) \cong \Mon(\For(\cF)).
\]

Informally, $\cJ_n$ should be thought of as ``tensoring with a unipotent Jordan block local system of rank $n$,'' and $\cJ$ should be thought of as ``tensoring with the pro-unipotent projective cover of the constant sheaf.''

\begin{prop}\label{prop:jordan}
Assume that $\cX$ is $R$-trivial, and let $\cF \in \Dmix_\Gm(\cX,\bk)$.
\begin{enumerate}
\item There are natural isomorphisms
\[
\Coi(\cJ(\cF)) \cong \cF
\qquad\text{and}\qquad
\cJ_1(\cF) \cong \Mon(\For(\cF)).
\]
\item There are natural isomorphisms
\[
\cJ(\D\cF) \cong (\D(\cJ\cF))\la 2\ra[-1]
\qquad\text{and}\qquad
\cJ_n(\D\cF) \cong (\D(\cJ_n\cF))\la 2-2n\ra.
\]
\item For any $n \ge 1$, there is a functorial distinguished triangle
\[
\cJ(\cF)\la -2n\ra \xrightarrow{\Nilp_{\cJ(\cF)}^n} \cJ(\cF) \to \cJ_n(\cF) \to.
\]
\item For any $n, m \ge 1$, there is a functorial distinguished triangle
\[
\cJ_n(\cF)\la -2m\ra \to \cJ_{n+m}(\cF) \to \cJ_m(\cF) \to.
\]
\end{enumerate}
\end{prop}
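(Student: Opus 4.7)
The plan is to tackle the four assertions in the order (1), (3), (4), (2), since Part (3) is the technical core and Parts (4) and (2) are then deduced from it. Part (1) is essentially formal. The second isomorphism, $\cJ_1(\cF)\cong \Mon(\For(\cF))$, follows by comparing the explicit matrix formula~\eqref{eqn:mon-concrete} for $\Mon\circ\For$ with the definition of $\cJ_1$: the $R$-triviality hypothesis kills the entry $\xi\cdot\id_\cF$, while the $\bxi$-component of $\For(\delta)$ is automatically zero for objects originating in $\Dmix_\Gm(\cX,\bk)$, leaving precisely the differential of $\cJ_1(\cF)$. The first isomorphism, $\Coi(\cJ(\cF))\cong \cF$, is the identity already at the level of dgg categories: $\cJ$ embeds $(\cF,\delta)$ as $(\cF,1\otimes\delta)$ via the unit $\iota_{R^\vee}$, and $\Coi$ is defined by the counit $\varepsilon_{R^\vee}$, so the composition is visibly trivial.

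For Part (3), I would apply the cone formula from Section~\ref{sec:triangulated} to the chain map $\sr^n\cdot\id: \cJ(\cF)\la -2n\ra \to \cJ(\cF)$. The underlying graded parity sheaf of the cone is $\cF\oplus \cF\la -2n\ra[1]$, with differential matrix whose diagonal entries are $\delta$ and $-\bs(\delta)=-\delta$ (no Koszul signs appear, because the $\sS$-factor has only even cohomological degree) and whose off-diagonal entry is $\sr^n\cdot\id$. After reordering the two summands this matches the definition of $\cJ_n(\cF)$ exactly, and the standard cone triangle is the distinguished triangle in the statement. Functoriality in $\cF$ is clear because every ingredient is natural in $(\cF,\delta)$.

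For Part (4), factor
\[
\cJ(\cF)\la -2(n+m)\ra \xrightarrow{\Nilp^n} \cJ(\cF)\la -2m\ra \xrightarrow{\Nilp^m} \cJ(\cF)
\]
and invoke the octahedral axiom. By Part (3) (and the fact that $\cJ_n$ commutes with Tate twists), the cones of the three maps in this commutative triangle are $\cJ_n(\cF)\la -2m\ra$, $\cJ_m(\cF)$, and $\cJ_{n+m}(\cF)$, respectively, so the octahedron produces the required distinguished triangle. For Part (2), the first isomorphism is a direct inspection: both $\cJ(\D\cF)$ and $(\D\cJ\cF)\la 2\ra[-1]$ have underlying graded parity sheaf $\D_\pari\cF$ and carry the differential $1\otimes \D_\pari\delta$, once one checks that $\D$ on $\Dmix_\mon(\cX,\bk)$ extends as the identity on the $\sS$-factor and that $\la -2\ra[1]\la 2\ra[-1]$ is trivial. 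The second isomorphism is then obtained by applying $\D$ to the triangle of Part (3), using $\D\Nilp_\cF = \Nilp_{\D\cF}$ (since $\sr\in\sS$ is fixed by $\D$) together with the contravariant cone identity $\cone(\D f) \cong \D\cone(f)[1]$; a second application of Part (3) to $\D\cF$, combined with the first isomorphism of Part (2), matches both sides up to the twist $\la 2-2n\ra$ predicted by the statement.

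The main obstacle I anticipate is the Tate-twist and shift bookkeeping in Part (2), particularly in deducing the second isomorphism, where the asymmetric factor $\la -2\ra[1]$ in the definition of $\D$ on $\Dmix_\mon(\cX,\bk)$ interacts with cone-dualization through several intermediate shifts; elsewhere the verifications reduce to routine matrix computations.
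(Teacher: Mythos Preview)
Your proposal is correct and, since the paper's own proof consists only of the sentence ``These statements are all immediate consequences of the definitions.\ The details are left to the reader,'' you have in fact supplied considerably more than the paper does.  Your arguments for Parts~(1) and~(3) are exactly the direct inspections the paper has in mind.

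Two small remarks on where your route diverges slightly from a pure ``unpack the definitions'' approach.  For Part~(4), invoking the octahedral axiom is clean, but note that the axiom alone does not guarantee \emph{functoriality} of the resulting triangle; you should either observe that the whole octahedron can be realised explicitly at the dgg level (so that the map $\cJ_n(\cF)\la -2m\ra \to \cJ_{n+m}(\cF)$ is a concrete chain map, visibly natural in $(\cF,\delta)$), or simply write down that chain map directly and check it by hand---which is arguably what the paper intends.  Likewise, for the second isomorphism in Part~(2) your cone-dualization argument is valid, but a direct comparison of underlying graded parity sheaves and differentials (as you do for the first isomorphism) is shorter and avoids the shift bookkeeping you flag as the main obstacle: one finds that both $(\D\cJ_n\cF)\la 2-2n\ra$ and $\cJ_n(\D\cF)$ have underlying object $\D_\pari\cF \oplus (\D_\pari\cF)\la -2n\ra[1]$, and the differentials match up to the evident sign-swap isomorphism.
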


These statements are all immediate consequences of the definitions.  The details are left to the reader.

\section{The nearby cycles formalism}
\label{sec:nearby}

In this section, we define the nearby cycles functor and discuss its main properties.  We also discuss two additional related functors, the maximal extension functor and the vanishing cycles functor, following the organization of ideas in~\cite{beilinson, reich}.  

\subsection{The nearby cycles functor}

Let $\Gm$ act on $\bA^1$ by a nontrivial character $z \mapsto z^c$.  Assume that the integer $c$ is invertible in $\bk$.  This implies that the equivariant cohomology $\coh^\bullet_\Gm(\bA^1 \smallsetminus \{0\}, \bk)$ is trivial.

With respect to this action, suppose that we have a $\Gm$-equivariant map
\[
f: X \to \bA^1.
\]
Let $X_\gen = f^{-1}(\bA^1 \smallsetminus 0)$, and let $X_0 = f^{-1}(0)$.  As usual, we use the stacky notation $\cX_0 = X_0/H$ and $\cX_\gen = X_\gen/H$. Let
\[
\bi: \cX_0 \hookrightarrow \cX
\qquad\text{and}\qquad
\bj: \cX_\gen \hookrightarrow \cX
\]
be the inclusion maps.  We add the following assumptions:
\begin{enumerate}
\item Each stratum $\cX_s$ is contained in either $\cX_0$ or $\cX_\gen$.
\item The variety $\cX_0$ is $R$-free.
\end{enumerate}
We also have:
\begin{enumerate}
\addtocounter{enumi}{2}
\item The variety $\cX_\gen$ is $R$-trivial.
\end{enumerate}
This is not an extra assumption, but rather a consequence of the conditions above.  Indeed, for parity sheaves $\cF$ and $\cG$ on $X_\gen$, the action of $R$ on $\uHom(\cF,\cG)$ factors through $\coh^\bullet_\Gm(\bA^1 \smallsetminus \{0\}) \cong \bk$.

We define the \emph{nearby cycles functor}
\[
\Psi_f: \Dmix_\Gm(\cX_\gen,\bk) \to \Dmix(\cX_0,\bk)
\]
by the formula
\[
\Psi_f(\cF) = \Mon^{-1}\bi^*\bj_*\cJ(\cF)\la -2\ra.
\]
Note that Theorem~\ref{thm:rfree-mon} tells us that $\bi^*\bj_*\cJ(\cF) \in \Dmix_\mon(\cX_0,\bk)$ is automatically constructible, and that it makes sense to apply the inverse functor of $\Mon$.

The Tate twist $\la -2\ra$ in the formula above is included for compatibility with the literature.  To elaborate, when $\bk$ is a field of characteristic $0$, this Tate twist corresponds to the operation that is usually denoted by $(1)$ in the context of mixed $\ell$-adic sheaves or mixed Hodge modules (see~\cite[\S6.1]{ar:kdsf}).  The Tate twist above thus matches that in~\eqref{eqn:nearby-analytic}, and the Tate twists in a number of statements below (such as Propositions~\ref{prop:nearby-verdier}, \ref{prop:maxext}, and~\ref{prop:vancyc}) match those in~\cite{morel}.

\begin{lem}\label{lem:psi-formula}
For $\cF \in \Dmix_\Gm(\cX_\gen,\bk)$, there is a natural isomorphism $\Psi_f(\cF) \cong \Mon^{-1}\bi^!\bj_!\cJ(\cF)\la -2\ra[1]$.
\end{lem}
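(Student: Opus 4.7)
The plan is to reduce the statement to a single key isomorphism: for any $\cG \in \Dmix_\mon(\cX_\gen, \bk)$---and in particular for $\cG = \cJ(\cF)$---there is a natural isomorphism
\[
\bi^*\bj_*\cG \cong \bi^!\bj_!\cG[1]
\]
in $\Dmix_\mon(\cX_0,\bk)$. Granting this, the lemma follows by applying $\Mon^{-1}$ and the Tate twist $\la-2\ra$ to both sides and invoking the definition of $\Psi_f$ (and Theorem~\ref{thm:rfree-mon} to know $\Mon^{-1}$ makes sense on both).

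To establish the isomorphism I will play the two recollement triangles from Theorem~\ref{thm:recollement} against each other. First, applying the triangle $\bi_*\bi^! \to \id \to \bj_*\bj^* \to$ to the object $\bj_!\cG$ and using $\bj^*\bj_! \cong \id$ gives
\[
\bi_*\bi^!\bj_!\cG \to \bj_!\cG \xrightarrow{\alpha} \bj_*\cG \to .
\]
Second, applying $\bj_!\bj^* \to \id \to \bi_*\bi^* \to$ to $\bj_*\cG$ and using $\bj^*\bj_* \cong \id$ gives
\[
\bj_!\cG \xrightarrow{\alpha'} \bj_*\cG \to \bi_*\bi^*\bj_*\cG \to .
\]
If I can show $\alpha = \alpha'$, then these two triangles identify $\bi_*\bi^!\bj_!\cG$ and $\bi_*\bi^*\bj_*\cG$ as the cocone and the cone, respectively, of a single morphism; hence $\bi_*\bi^*\bj_*\cG \cong \bi_*\bi^!\bj_!\cG[1]$. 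Because $\bi_*$ is fully faithful (it is part of a recollement), the desired isomorphism $\bi^*\bj_*\cG \cong \bi^!\bj_!\cG[1]$ follows by stripping $\bi_*$.

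The equality $\alpha = \alpha'$ is a standard adjunction identity. By the adjunction $(\bj_!,\bj^*)$ (Theorem~\ref{thm:recollement}) together with $\bj^*\bj_*\cG \cong \cG$,
\[
\Hom(\bj_!\cG,\bj_*\cG) \cong \Hom(\cG,\bj^*\bj_*\cG) \cong \Hom(\cG,\cG),
\]
so any morphism $\bj_!\cG \to \bj_*\cG$ is determined by its $\bj^*$-image. Both $\alpha$ and $\alpha'$ are constructed from adjunction units/counits, and in each case $\bj^*$ carries them to $\id_\cG$: for $\alpha$, this follows because $\bj^*\bi_*\bi^!\bj_!\cG = 0$ and $\bj^*\alpha$ is the composite $\cG = \bj^*\bj_!\cG \to \bj^*\bj_*\cG = \cG$ induced by the unit, which is the identity; for $\alpha'$, analogously using the counit. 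Thus $\alpha = \alpha'$.

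The only real point to verify is this coincidence of canonical maps, and it is a formal consequence of the recollement package; no genuine computation beyond what is already in Section~\ref{sec:recollement} is needed. The rest of the argument is automatic.
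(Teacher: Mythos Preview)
Your proof is correct and follows essentially the same route as the paper: the paper also invokes the two recollement triangles $\bj_!\cJ(\cF) \to \bj_*\cJ(\cF) \to \bi_*\bi^*\bj_*\cJ(\cF) \to$ and $\bi_*\bi^!\bj_!\cJ(\cF) \to \bj_!\cJ(\cF) \to \bj_*\cJ(\cF) \to$ to conclude $\bi^!\bj_!\cJ(\cF)[1] \cong \bi^*\bj_*\cJ(\cF)$. The only difference is that you spell out explicitly why the middle map $\bj_!\cG \to \bj_*\cG$ is the same in both triangles (via the adjunction isomorphism $\Hom(\bj_!\cG,\bj_*\cG) \cong \Hom(\cG,\cG)$), whereas the paper simply takes this as part of the recollement package.
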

\begin{proof}
The functorial distinguished triangles $\bj_!\cJ(\cF) \to \bj_*\cJ(\cF) \to \bi_*\bi^*\bj_*\cJ(\cF) \to$ and $\bi_*\bi^!\bj_!\cJ(\cF) \to \bj_!\cJ(\cF) \to \bj_*\cJ(\cF) \to$ show that there is a natural isomorphism $\bi^!\bj_!\cJ(\cF)[1] \cong \bi^*\bj_*\cJ(\cF)$.  The result follows.
\end{proof}

\begin{lem}\label{lem:psi-sr-dt}
For $\cF \in \Dmix_\Gm(\cX_\gen,\bk)$, there are natural distinguished triangles
\begin{gather*}
\Psi_f(\cF) \xrightarrow{\Nilp} \Psi_f(\cF)\la 2\ra \to \bi^*\bj_*\For(\cF) \to, \\
\bi^!\bj_!\For(\cF) \to \Psi_f(\cF) \xrightarrow{\Nilp} \Psi_f(\cF)\la 2\ra \to.
\end{gather*}
\end{lem}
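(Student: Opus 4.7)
The plan is to obtain both distinguished triangles as the images, under suitable composites of triangulated functors, of the Jordan-block distinguished triangle from Proposition~\ref{prop:jordan}(3) with $n=1$,
\[
\cJ(\cF)\la -2\ra \xrightarrow{\Nilp_{\cJ(\cF)}} \cJ(\cF) \to \cJ_1(\cF) \to,
\]
together with the identification $\cJ_1(\cF) \cong \Mon(\For(\cF))$ from Proposition~\ref{prop:jordan}(1).

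For the first triangle, I will apply $\bi^*\bj_*$ and then $\Mon^{-1}$. Each of the three resulting terms lies in the essential image of $\Mon$: the object $\bi^*\bj_*\cJ(\cF)\la -2\ra$ equals $\Mon(\Psi_f(\cF))$ by the very definition of $\Psi_f$; the term $\bi^*\bj_*\cJ(\cF)$ differs from it only by a twist, so is $\Mon(\Psi_f(\cF)\la 2\ra)$; and $\bi^*\bj_*\cJ_1(\cF) \cong \bi^*\bj_*\Mon(\For(\cF)) \cong \Mon(\bi^*\bj_*\For(\cF))$ by Proposition~\ref{prop:recolle-commute}. Applying $\Mon^{-1}$ termwise yields
\[
\Psi_f(\cF) \to \Psi_f(\cF)\la 2\ra \to \bi^*\bj_*\For(\cF) \to.
\]
The first map is $\Mon^{-1}\bi^*\bj_*(\Nilp_{\cJ(\cF)})$. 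Since $\Nilp$ is defined as multiplication by $\sr \in R^\vee$, it is natural with respect to every triangulated functor between monodromic categories, and its compatibility with $\Mon$ (hence with $\Mon^{-1}$) is precisely Remark~\ref{rmk:mon-nilp}; thus the induced map is $\Nilp_{\Psi_f(\cF)}$.

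For the second triangle, I first rotate the Jordan-block triangle to
\[
\cJ_1(\cF)[-1] \to \cJ(\cF)\la -2\ra \xrightarrow{\Nilp_{\cJ(\cF)}} \cJ(\cF) \to,
\]
apply $\bi^!\bj_!$, and invoke the alternative formula $\Psi_f(\cF) \cong \Mon^{-1}\bi^!\bj_!\cJ(\cF)\la -2\ra[1]$ of Lemma~\ref{lem:psi-formula}. All three terms are constructible for the same reasons as above, so $\Mon^{-1}$ applies. Tracking the shift $\la -2\ra[1]$ term by term, the left, middle, and right become $\bi^!\bj_!\For(\cF)\la -2\ra$, $\Psi_f(\cF)\la -2\ra$, and $\Psi_f(\cF)$, respectively. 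A final twist by $\la 2\ra$ yields the stated triangle, the middle map being identified with $\Nilp_{\Psi_f(\cF)}$ by the same argument as before.

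The main obstacle will be the bookkeeping of the shifts $[{-}1]$, $\la -2\ra$, $\la 2\ra$ through the various identifications, and verifying that the induced middle map really is the intrinsic monodromy endomorphism of $\Psi_f(\cF)$. Both are essentially formal: the shifts commute with all the functors involved, and the naturality of $\Nilp$ as "$\sr\cdot\id$" guarantees the identification of the map.
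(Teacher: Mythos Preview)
Your proof is correct and follows exactly the same approach as the paper's own proof, which reads in its entirety: ``These triangles are obtained by applying $\Mon^{-1}\bi^*\bj_*$ and $\Mon^{-1}\bi^!\bj_!$, respectively, to the distinguished triangle in the third part of Proposition~\ref{prop:jordan}.'' You have simply spelled out the bookkeeping that the paper leaves implicit, including the identification of the middle map with $\Nilp_{\Psi_f(\cF)}$ and the shift-tracking via Lemma~\ref{lem:psi-formula} for the second triangle.
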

\begin{proof}
These triangles are obtained by applying $\Mon^{-1}\bi^*\bj_*$ and $\Mon^{-1}\bi^!\bj_!$, respectively, to the distinguished triangle in the third part of Proposition~\ref{prop:jordan}.
\end{proof}

\begin{prop}\label{prop:nearby-verdier}
For any $\cF \in \Dmix_\Gm(\cX_\gen,\bk)$, there is a natural isomorphism $\D\Psi_f(\cF) \cong \Psi_f(\D\cF)\la 2\ra$.
\end{prop}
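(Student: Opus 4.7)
The plan is to start from the definition $\Psi_f(\cF) = \Mon^{-1}\bi^*\bj_*\cJ(\cF)\la -2\ra$, apply $\D$, and then successively interchange it past each functor in the composition until we recognize the result as (a twist of) the alternative expression for $\Psi_f(\D\cF)$ given by Lemma~\ref{lem:psi-formula}.

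Concretely, I would proceed in the following steps. First, since $\D$ commutes with the shift $\la 2\ra$ up to a sign in grading (in fact $\D(\cG\la n\ra) \cong (\D\cG)\la -n\ra$), we have
\[
\D\Psi_f(\cF) \;\cong\; \D\bigl(\Mon^{-1}\bi^*\bj_*\cJ(\cF)\bigr)\la 2\ra.
\]
Second, the isomorphism $\Mon\circ\D \cong \D\circ\Mon$ established in Section~\ref{sec:monodromy} implies the analogous identity for the quasi-inverse (on the essential image of $\Mon$, which by Theorem~\ref{thm:rfree-mon} is all of $\Dmix_\mon(\cX_0,\bk)$ under the $R$-freeness hypothesis on $\cX_0$), so we may push $\D$ past $\Mon^{-1}$. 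Third, Verdier duality in the three categories of Section~\ref{sec:3derived} interchanges $\bi^*$ with $\bi^!$ and $\bj_*$ with $\bj_!$, in view of the recollement framework of Theorem~\ref{thm:recollement}; this gives
\[
\D\Psi_f(\cF) \;\cong\; \Mon^{-1}\bi^!\bj_!\bigl(\D\cJ(\cF)\bigr)\la 2\ra.
\]

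Fourth, Proposition~\ref{prop:jordan}(2) supplies the natural isomorphism $\cJ(\D\cF) \cong (\D\cJ(\cF))\la 2\ra[-1]$, equivalently $\D\cJ(\cF) \cong \cJ(\D\cF)\la -2\ra[1]$. Substituting this into the previous line yields
\[
\D\Psi_f(\cF) \;\cong\; \Mon^{-1}\bi^!\bj_!\cJ(\D\cF)\la -2\ra[1]\la 2\ra,
\]
and the right-hand side is, by Lemma~\ref{lem:psi-formula} applied to $\D\cF$, exactly $\Psi_f(\D\cF)\la 2\ra$. Naturality is automatic since each individual isomorphism used is natural in $\cF$.

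The main obstacle is purely bookkeeping: keeping the Tate twists, cohomological shifts, and sign conventions consistent as one moves $\D$ through $\Mon^{-1}$, through $\bi^*\bj_*$, and past $\cJ$. In particular one must be careful that the commutation of $\D$ with $\Mon^{-1}$ really is available here---this uses that $\bi^*\bj_*\cJ(\cF)$ lies in the essential image of $\Mon$ (guaranteed by the $R$-freeness of $\cX_0$ and Theorem~\ref{thm:rfree-mon}), without which $\Mon^{-1}$ would not be defined on this object. Beyond that verification, each step is a formal consequence of results already established in the paper.
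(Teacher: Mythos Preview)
Your proof is correct and follows essentially the same approach as the paper's: commute $\D$ past $\Mon^{-1}$, $\bi^*$, $\bj_*$ (turning them into $\Mon^{-1}$, $\bi^!$, $\bj_!$), apply Proposition~\ref{prop:jordan}(2) to rewrite $\D\cJ(\cF)$ as $\cJ(\D\cF)\la -2\ra[1]$, and then invoke Lemma~\ref{lem:psi-formula} to recognize the result as $\Psi_f(\D\cF)\la 2\ra$. The paper compresses your first three steps into a single line, but the content is identical.
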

\begin{proof}
Using Proposition~\ref{prop:jordan} and Lemma~\ref{lem:psi-formula}, we have
\begin{multline*}
\D\Psi_f(\cF) = \D(\Mon^{-1}\bi^*\bj_*\cJ(\cF)\la -2\ra) 
\cong \Mon^{-1}\bi^!\bj_!\D(\cJ(\cF))\la 2\ra \\
\cong \Mon^{-1}\bi^!\bj_!\cJ(\D\cF)[1] \cong \Psi_f(\D\cF)\la 2\ra,
\end{multline*}
as desired.
\end{proof}

\begin{hyp}\label{hyp:j-exact}
The functors $\bj_!, \bj_*: \Dmix_\Gm(\cX_\gen,\bk) \to \Dmix_\Gm(\cX,\bk)$ are $t$-exact for the perverse $t$-structure.
\end{hyp}

Since $\bj$ is the base change of the inclusion of the affine open subset $(\bA^1 \smallsetminus \{0\}) \hookrightarrow \bA^1$, it is an affine morphism. If we were working in the ordinary (nonmixed) derived category $\Dbc(\cX,\bk)$, Hypothesis~\ref{hyp:j-exact} would then follow from~\cite[Corollaire~4.1.3]{bbd}.  When $\bk$ is a field of characteristic~$0$ (and under some additional assumptions on $\cX$), Hypothesis~\ref{hyp:j-exact} can likely be proved using the techniques of~\cite{ar:kdsf}.

Hypothesis~\ref{hyp:j-exact} will be proved for some key examples in~\cite{arider}.  It remains open in general for now.

\begin{thm}
If Hypothesis~\ref{hyp:j-exact} holds, then the functor $\Psi_f: \Dmix_\Gm(\cX_\gen,\bk) \to \Dmix(\cX_0,\bk)$ is $t$-exact for the perverse $t$-structure.
\end{thm}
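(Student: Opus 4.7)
The strategy, following the classical argument of Reich, is to deduce $t$-exactness from the distinguished triangles of Lemma~\ref{lem:psi-sr-dt}, using crucially the nilpotency of $\Nilp_{\Psi_f(\cF)}$ in $\Dmix(\cX_0,\bk)$ afforded by Remark~\ref{rmk:mon-nilp}. The plan is to establish right $t$-exactness directly from the first triangle, and then deduce left $t$-exactness from the Verdier duality isomorphism of Proposition~\ref{prop:nearby-verdier}.

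For right $t$-exactness, suppose $\cF \in \p\Dmix_\Gm(\cX_\gen,\bk)^{\le 0}$. First I would chain together the $t$-exactness of $\For$ (established after Proposition~\ref{prop:eqvt-ff}), the $t$-exactness of $\bj_*$ from Hypothesis~\ref{hyp:j-exact}, and the right $t$-exactness of $\bi^*$ from general recollement, to conclude that $\bi^*\bj_*\For(\cF)$ lies in $\p\Dmix(\cX_0,\bk)^{\le 0}$. Then I would take perverse cohomology of the first triangle of Lemma~\ref{lem:psi-sr-dt}: for each $n \ge 1$ it yields an exact sequence
\[
\p H^n(\Psi_f(\cF)) \xrightarrow{\p H^n(\Nilp)} \p H^n(\Psi_f(\cF))\la 2\ra \to \p H^n(\bi^*\bj_*\For(\cF)) = 0,
\]
so $\p H^n(\Nilp)$ is surjective. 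Since $\Nilp$ is nilpotent on $\Psi_f(\cF)$ by Remark~\ref{rmk:mon-nilp} and the operations $\p H^n({-})$ and $\la 2\ra$ are functorial, $\p H^n(\Nilp)$ is itself nilpotent (after identifying the various $\la 2\ra$-shifts); combined with its surjectivity this forces $\p H^n(\Psi_f(\cF)) = 0$.

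For left $t$-exactness, I would invoke Proposition~\ref{prop:nearby-verdier}. If $\cF \in \p\Dmix_\Gm(\cX_\gen,\bk)^{\ge 0}$, then $\D\cF \in \p\Dmix_\Gm(\cX_\gen,\bk)^{\le 0}$, so by the right $t$-exactness just established, $\Psi_f(\D\cF) \in \p\Dmix(\cX_0,\bk)^{\le 0}$, and then $\D\Psi_f(\cF) \cong \Psi_f(\D\cF)\la 2\ra$ lies in $\p\Dmix(\cX_0,\bk)^{\le 0}$ because $\la 2\ra$ is $t$-exact. Applying $\D$ once more gives $\Psi_f(\cF) \in \p\Dmix(\cX_0,\bk)^{\ge 0}$. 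As an alternative one could run the symmetric argument on the second triangle of Lemma~\ref{lem:psi-sr-dt}, using left $t$-exactness of $\bi^!$ and the hypothesis for $\bj_!$, and concluding via the fact that an injective nilpotent endomorphism of a perverse sheaf must vanish.

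The main subtle point, rather than a real obstacle, is verifying that the connecting map in the long exact sequence is genuinely the perverse cohomology of $\Nilp$ (with no stray signs or shifts), and that this map coincides with the intrinsic monodromy endomorphism of $\p H^n(\Psi_f(\cF))$ so that Remark~\ref{rmk:mon-nilp} applies to it. Both points are consequences of the fact that $\Nilp$ is a natural transformation commuting with every morphism in $\Dmix(\cX_0,\bk)$. Once this compatibility is pinned down, the argument reduces to a short diagram chase, and the only genuinely nontrivial input beyond the standard recollement formalism is Hypothesis~\ref{hyp:j-exact} itself.
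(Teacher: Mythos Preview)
Your proof is correct and follows essentially the same strategy as the paper: use the distinguished triangle of Lemma~\ref{lem:psi-sr-dt} to show that $\p\coh^i(\Nilp)$ is injective or surjective outside the heart, and then invoke nilpotency (Remark~\ref{rmk:mon-nilp}) to kill those cohomology groups. The only minor difference is organizational: the paper works with a single perverse $\cF$ and extracts both the surjectivity for $i>0$ and the injectivity for $i<0$ from the \emph{first} triangle alone, after first establishing the sharper bound $\p\coh^i(\bi^*\bj_*\For\cF)=0$ for $i\neq -1,0$ via the triangle $\bj_!\cF\to\bj_*\cF\to\bi_*\bi^*\bj_*\cF\to$; you instead treat the two halves separately, getting left $t$-exactness either from Verdier duality (Proposition~\ref{prop:nearby-verdier}) or from the second triangle of Lemma~\ref{lem:psi-sr-dt}. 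Both routes are equally short, and your duality argument has the virtue of making the self-dual nature of the statement transparent.
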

\begin{proof}
Let $\p\coh^i$ denote cohomology with respect to the perverse $t$-structure, and consider the map $\p\coh^i(\Nilp): \p\coh^i(\Psi_f(\cF)) \to \p\coh^i(\Psi_f(\cF)\la 2\ra)$.  If this map is either injective or surjective, the same is true for any power $\p\coh^i(\Nilp^n): \p\coh^i(\Psi_f(\cF)) \to \p\coh^i(\Psi_f(\cF)\la 2n\ra)$.  But by Remark~\ref{rmk:hom-nilp}, we have $\Nilp^n = 0$ for $n \gg 0$.  We conclude that if $\p\coh^i(\Nilp)$ is either injective or surjective, then in fact $\p\coh^i(\Psi_f(\cF)) = 0$.

Now let $\cF$ be a perverse sheaf in $\Dmix_\Gm(\cX_\gen,\bk)$, and consider the long exact perverse cohomology sequence for the distinguished triangle $\bj_!\cF \to \bj_*\cF \to \bi_*\bi^*\bj_*\cF \to$.  Hypothesis~\ref{hyp:j-exact} implies that $\p \coh^i(\bi^*\bj_*\cF)$ vanishes unless $i = -1,0$.  The long exact cohomology sequence associated to the distinguished triangle from Lemma~\ref{lem:psi-sr-dt} then shows that $\p\coh^i(\Nilp)$ is surjective for $i > 0$ and injective for $i < 0$.  Therefore, $\p\coh^i(\Psi_f(\cF)) = 0$ unless $i = 0$.
\end{proof}

\subsection{The maximal extension functor}

For $\cF \in \Perv_\Gm(\cX_\gen,\bk)$, let $\tilde\Xi_f(\cF)$ be the cone of the natural map $\bj_!\cJ(\cF)\la -2\ra \xrightarrow{\Nilp} \bj_*\cJ(\cF)$.  In other words, we have a distinguished triangle
\begin{equation}\label{eqn:txi-defn}
\bj_!\cJ(\cF)\la -2\ra \xrightarrow{\Nilp} \bj_*\cJ(\cF) \to \tilde\Xi_f(\cF) \to.
\end{equation}

\begin{lem}\label{lem:maxext-pre}
The assignment $\cF \mapsto \tilde\Xi_f(\cF)$ defines a functor $\Perv_\Gm(\cX_\gen,\bk) \to \Dmix_\mon(\cX,\bk)$.  Moreover, there are natural isomorphisms
\[
\bj^*\tilde\Xi_f(\cF) \cong \Mon(\For(\cF)),
\quad
\bi^*\tilde\Xi_f(\cF) \cong \bi^*\bj_*\cJ(\cF),
\quad
\bi^!\tilde\Xi_f(\cF) \cong \bi^!\bj_!\cJ(\cF)\la -2\ra[1].
\]
In particular, $\tilde\Xi_f(\cF)$ is constructible.
\end{lem}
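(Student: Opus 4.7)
The plan is to apply $\bj^*$, $\bi^*$, and $\bi^!$ to the defining triangle \eqref{eqn:txi-defn} to extract the three object-level isomorphisms; deduce constructibility from Lemma~\ref{lem:constr-recolle}; and upgrade the cone assignment to a functor by a Hom-vanishing argument. Applying $\bj^*$ and using $\bj^*\bj_!\cong\bj^*\bj_*\cong\id$, the restricted triangle becomes
\[
\cJ(\cF)\la -2\ra\xrightarrow{\Nilp}\cJ(\cF)\to\bj^*\tilde\Xi_f(\cF)\to,
\]
which is precisely the Jordan-block triangle of Proposition~\ref{prop:jordan}(3) with $n=1$; combining this with the natural isomorphism $\cJ_1(\cF)\cong\Mon(\For(\cF))$ identifies $\bj^*\tilde\Xi_f(\cF)$ with $\Mon(\For(\cF))$. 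For $\bi^*$ and $\bi^!$, the standard recollement identities $\bi^*\bj_!=0$ and $\bi^!\bj_*=0$ (both consequences of $\bj^*\bi_*=0$, which holds at the level of parity sheaves by~\eqref{eqn:recolle-easy}, via adjunction in the respective derived categories) make the appropriate term of the restricted triangle vanish, yielding $\bi^*\tilde\Xi_f(\cF)\cong\bi^*\bj_*\cJ(\cF)$ and $\bi^!\tilde\Xi_f(\cF)\cong\bi^!\bj_!\cJ(\cF)\la -2\ra[1]$.

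Since $\cX_0$ is $R$-free by hypothesis, Theorem~\ref{thm:rfree-mon} tells us every object of $\Dmix_\mon(\cX_0,\bk)$ is constructible, so in particular $\bi^*\tilde\Xi_f(\cF)\cong\bi^*\bj_*\cJ(\cF)$ is constructible. Meanwhile $\bj^*\tilde\Xi_f(\cF)\cong\Mon(\For(\cF))$ is constructible by construction. Lemma~\ref{lem:constr-recolle} then yields constructibility of $\tilde\Xi_f(\cF)$ itself.

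For functoriality, given a morphism $\phi:\cF\to\cF'$ in $\Perv_\Gm(\cX_\gen,\bk)$, naturality of $\Nilp$ (from the discussion after Definition~\ref{defn:monodromy}) makes the squares built from $\bj_!\cJ(\phi)\la -2\ra$ and $\bj_*\cJ(\phi)$ commute, so TR3 produces a map $\tilde\Xi_f(\phi):\tilde\Xi_f(\cF)\to\tilde\Xi_f(\cF')$. Uniqueness of this induced map, which automatically gives preservation of identities and composition, reduces by a standard diagram chase to the vanishing
\[
\Hom\bigl(\bj_!\cJ(\cF)\la -2\ra[1],\ \tilde\Xi_f(\cF')\bigr)=0.
\]
By $(\bj_!,\bj^*)$-adjunction and the already-established identification of $\bj^*\tilde\Xi_f(\cF')$, this equals $\Hom(\cJ(\cF)\la -2\ra[1],\Mon(\For(\cF')))$; applying Proposition~\ref{prop:coi-adjoint} together with $\Coi\cJ(\cF)\cong\cF$ from Proposition~\ref{prop:jordan}(1) simplifies it to $\Hom_{\Dmix_\Gm(\cX_\gen,\bk)}(\cF,\cF'\la 2\ra[-1])$. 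Since both $\cF$ and $\cF'\la 2\ra$ lie in the perverse heart, this is a negative Ext in the heart, hence zero. This last step is the main subtlety of the lemma: cones in a triangulated category are only unique up to non-canonical isomorphism, and the Hom-vanishing computation is precisely what allows the assignment $\cF\mapsto\tilde\Xi_f(\cF)$ to be upgraded from an object-level definition to a bona fide functor.
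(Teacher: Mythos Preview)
Your proof is correct and follows essentially the same approach as the paper's: both apply $\bj^*$, $\bi^*$, $\bi^!$ to the defining triangle~\eqref{eqn:txi-defn}, invoke the recollement identities $\bi^*\bj_!=0$ and $\bi^!\bj_*=0$, and establish functoriality via the same Hom-vanishing computation (the paper cites \cite[Proposition~1.1.9]{bbd} where you say ``standard diagram chase''). The only difference is the order of presentation---the paper proves functoriality before the $\bi^*$ and $\bi^!$ identifications, while you do the reverse---but the logical content is identical.
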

\begin{proof}
Let $f: \cF \to \cG$ be a morphism in $\Perv_\Gm(\cX_\gen,\bk)$, and consider the diagram
\[
\begin{tikzcd}
\bj_!\cJ(\cF)\la -2\ra \ar[r, "\Nilp"] \ar[d, "\bj_!\cJ(f)\la -2\ra"'] &
\bj_*\cJ(\cF)\ar[r] \ar[d, "\bj_*\cJ(f)"'] &
\tilde\Xi_f(\cF) \ar[r] \ar[d, dashed, "h"] & {}
\\
\bj_!\cJ(\cG)\la -2\ra \ar[r, "\Nilp"] &
\bj_*\cJ(\cG)\ar[r] &
\tilde\Xi_f(\cG) \ar[r] & {}
\end{tikzcd}
\]
To show that $\tilde\Xi_f$ is a functor, it is enough to show that $h$ is unique. We will show this below. 

Applying $\bj^*$ to~\eqref{eqn:txi-defn}, we see that $\bj^*\tilde\Xi_f(\cG)$ is the cone of $\cJ(\cG)\la -2\ra \xrightarrow{\Nilp} \cJ(\cG)$.  By Proposition~\ref{prop:jordan}, we conclude that $\bj^*\tilde\Xi_f(\cG) \cong \Mon(\For(\cG))$.  The uniqueness of $h$ above follows from~\cite[Proposition~1.1.9]{bbd} and the following computation:
\begin{align*}
\Hom(\bj_!\cJ(\cF)\la -2\ra, \tilde\Xi_f(\cG)[-1])
&\cong \Hom(\cJ(\cF)\la -2\ra, \bj^*\tilde\Xi_f(\cG)[-1]) \\
&\cong \Hom(\cJ(\cF)\la -2\ra, \Mon(\For(\cG))[-1]) \\
&\cong \Hom(\Coi(\cJ(\cF))\la -2\ra, \cG[-1]) \\
&\cong \Hom(\cF\la -2\ra, \cG[-1]) = 0.
\end{align*}
Here, the last step follows from the fact that $\cF\la -2\ra$ and $\cG$ both lie in the heart of the perverse $t$-structure.

The descriptions of $\bi^*\tilde\Xi_f(\cF)$ and $\bi^!\tilde\Xi_f(\cF)$ are obtained by applying $\bi^*$ and $\bi^!$ to~\eqref{eqn:txi-defn}.  These objects are constructible by Theorem~\ref{thm:rfree-mon}.  By Lemma~\ref{lem:constr-recolle}, we conclude that $\tilde\Xi_f(\cF)$ is constructible.
\end{proof}

We define the \emph{maximal extension functor}
\[
\Xi_f: \Perv_\Gm(\cX_\gen,\bk) \to \Dmix(\cX,\bk)
\qquad\text{by}\qquad
\Xi_f(\cF) = \Mon^{-1}(\tilde\Xi_f(\cF)).
\]
By Lemma~\ref{lem:maxext-pre}, there is a natural isomorphism $\bj^*\Xi_f(\cF) \cong \For(\cF)$.

\begin{prop}\label{prop:maxext}
For $\cF \in \Perv_\Gm(\cX_\gen,\bk)$, there are  natural distinguished triangles
\begin{gather*}
\bj_!\cF \xrightarrow{\alpha_-} \Xi_f(\cF) \xrightarrow{\beta_-} \bi_*\Psi_f(\cF)\la 2\ra \to, \\
\bi_*\Psi_f(\cF) \xrightarrow{\beta_+} \Xi_f(\cF) \xrightarrow{\alpha_+} \bj_*\cF \to
\end{gather*}
such that $\alpha_+ \circ \alpha_-$ is the canonical map $\bj_!\cF \to \bj_*\cF$, and $\beta_- \circ \beta_+ = \Nilp: \Psi_f(\cF) \to \Psi_f(\cF)\la 2\ra$.

If Hypothesis~\ref{hyp:j-exact} holds, $\Xi_f$ is an exact functor $\Perv_\Gm(\cX_\gen,\bk) \to \Perv(\cX,\bk)$, and these are triangles are short exact sequences.
\end{prop}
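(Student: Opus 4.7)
The plan is to build both triangles by a single strategy: apply the octahedral axiom to two natural factorizations of the morphism defining $\tilde\Xi_f(\cF)$, and then transport down from $\Dmix_\mon$ to $\Dmix$ via $\Mon^{-1}$.  By naturality of $\Nilp$ with respect to the adjunction morphism $\mathrm{can}: \bj_! \to \bj_*$, the map $\Nilp: \bj_!\cJ(\cF)\la -2\ra \to \bj_*\cJ(\cF)$ of~\eqref{eqn:txi-defn} factors in two ways: through $\bj_!\cJ(\cF)$ (via $\Nilp$ then $\mathrm{can}$), and through $\bj_*\cJ(\cF)\la -2\ra$ (via $\mathrm{can}$ then $\Nilp$).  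Applying the octahedral axiom to the first factorization and identifying the two side cones via Proposition~\ref{prop:jordan}(3) and the recollement, I would obtain a distinguished triangle
\[
\bj_!\cJ_1(\cF) \xrightarrow{\tilde\alpha_-} \tilde\Xi_f(\cF) \xrightarrow{\tilde\beta_-} \bi_*\bi^*\bj_*\cJ(\cF) \to
\]
in $\Dmix_\mon(\cX,\bk)$.  The second factorization similarly yields
\[
\bi_*\bi^*\bj_*\cJ(\cF)\la -2\ra \xrightarrow{\tilde\beta_+} \tilde\Xi_f(\cF) \xrightarrow{\tilde\alpha_+} \bj_*\cJ_1(\cF) \to.
\]
Since $\cJ_1 \cong \Mon\circ\For$ (Proposition~\ref{prop:jordan}(1)) and $\Mon$ commutes with the recollement functors (Proposition~\ref{prop:recolle-commute}), every object in both triangles lies in the essential image of the fully faithful functor $\Mon$, and applying $\Mon^{-1}$ produces the two triangles claimed in the statement.

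The hard part will be verifying the compatibility identities $\alpha_+ \circ \alpha_- = \mathrm{can}$ and $\beta_- \circ \beta_+ = \Nilp$, because the octahedral axiom pins down its morphisms only up to some indeterminacy.  Both identities admit the same two-step treatment.  First, tracing through the commutative squares built into each octahedron shows that the two sides of the claimed identity agree after precomposition with a suitable morphism: for $\alpha$, after precomposing with the natural map $\bj_!\cJ(\cF) \to \bj_!\cJ_1(\cF)$ from the Jordan triangle; for $\beta$, after precomposing with the recollement map $\bj_*\cJ(\cF)\la -2\ra \to \bi_*\bi^*\bj_*\cJ(\cF)\la -2\ra$.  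The difference of the two sides therefore factors through the next term of an appropriate distinguished triangle, and the problem reduces to the vanishing of a single Hom-group.  Using the adjunctions $\bj_! \dashv \bj^*$ (with $\bj^*\bj_* \cong \id$ for $\bj$ an open inclusion) and $\Coi \dashv \Mon\circ\For$ from Proposition~\ref{prop:coi-adjoint}, combined with the isomorphism $\Coi\circ\cJ \cong \id$ of Proposition~\ref{prop:jordan}(1), these Hom-groups simplify: the one relevant for $\alpha$ becomes $\Hom_{\Dmix_\Gm}(\cF\la -2\ra[1],\cF)$, which vanishes because $\cF$ is perverse, while the one relevant for $\beta$ becomes $\Hom(\cJ(\cF)\la -2\ra[1], \bj^*\bi_*(\cdot))$, which vanishes by the recollement identity $\bj^*\bi_* = 0$.

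Finally, the exactness assertion under Hypothesis~\ref{hyp:j-exact} is straightforward.  The outer terms of both triangles lie in $\Perv(\cX,\bk)$: for $\bj_!\cF$ and $\bj_*\cF$ this is Hypothesis~\ref{hyp:j-exact} (combined with Proposition~\ref{prop:eqvt-ff}), while for $\bi_*\Psi_f(\cF)$ it combines the $t$-exactness of $\Psi_f$ proved in the preceding theorem with the $t$-exactness of the closed immersion pushforward $\bi_*$.  Hence the middle term $\Xi_f(\cF)$ is perverse as well, and each distinguished triangle collapses into a short exact sequence in $\Perv(\cX,\bk)$.
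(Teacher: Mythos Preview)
Your argument is correct, and it is closely related to the paper's but organized differently.  The paper takes the two distinguished triangles to be the \emph{recollement} triangles for $\tilde\Xi_f(\cF)$ itself, namely $\bj_!\bj^*\tilde\Xi_f(\cF) \to \tilde\Xi_f(\cF) \to \bi_*\bi^*\tilde\Xi_f(\cF) \to$ and its companion, and then identifies the outer terms using Lemma~\ref{lem:maxext-pre}.  This has two payoffs: naturality is automatic once $\tilde\Xi_f$ is known to be a functor, and the identity $\alpha_+\circ\alpha_- = \mathrm{can}$ comes for free, since the composite of the two recollement adjunction maps is by definition the canonical map $\bj_!\bj^* \to \bj_*\bj^*$.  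Only the identity $\beta_-\circ\beta_+ = \Nilp$ requires real work, and for this the paper uses precisely your first octahedron (the factorization through $\bj_!\cJ(\cF)$), but applies $\Mon^{-1}\bi^!$ to it and reads off $\bi^!\beta_- = \Nilp$ directly from the resulting diagram, rather than going through a Hom-vanishing argument.

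Your route---building both triangles from two octahedra and then pinning down both compatibilities via Hom-vanishing---is a valid alternative, and the vanishing computations you sketch (reducing via $\bj_!\dashv\bj^*$, $\Coi\dashv\Mon\circ\For$, and $\Coi\circ\cJ\cong\id$ to $\Hom(\cF\la-2\ra[1],\cF)=0$ and $\bj^*\bi_*=0$) are exactly right.  The cost is that you must also address naturality of the octahedral maps, which is not automatic; the same Hom-vanishing arguments do this, since they show the relevant maps are unique.  In short: the paper's organization buys one identity and naturality for free, while yours is more symmetric but does a bit more bookkeeping.
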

\begin{proof}
Let $\hat \alpha_+$, $\hat\alpha_-$, $\hat\beta_+$, and $\hat\beta_-$ be the maps indicated in the natural distinguished triangles below:
\begin{gather*}
\bj_!\bj^*\tilde\Xi_f(\cF) \xrightarrow{\hat\alpha_-} \tilde\Xi_f(\cF) \xrightarrow{\hat\beta_-} \bi_*\bi^*\tilde\Xi_f(\cF) \to, \\
\bi_*\bi^!\tilde\Xi_f(\cF) \xrightarrow{\hat\beta_+} \tilde\Xi_f(\cF) \xrightarrow{\hat\alpha_+} \bj_*\bj^*\tilde\Xi_f(\cF) \to,
\end{gather*}
The distinguished triangles in statement of the proposition are obtained by applying $\Mon^{-1}$ to the triangles above, and then using the information from the end Lemma~\ref{lem:maxext-pre}.  It is clear from this construction that $\alpha_+ \circ \alpha_-$ is the natural map $\bj_!\cF \to \bj_*\cF$.

Next, we must show that $\beta_- \circ \beta_+ = \Nilp$.  Since $\beta_+$ is induced by the adjunction $\bi_*\bi^! \to \id$, our problem is equivalent to showing that $\bi^!\beta_-$ is the map induced by $\Nilp$.  To see this, we will study the following natural octahedral diagram:
\begin{equation}\label{eqn:max-oct1}
\hspace{-1em}
\begin{tikzcd}[column sep=tiny, row sep=small]
\bj_!\Mon(\cF) \ar[dd, "{\sst[1]}" description] && \bi_*\bi^*\bj_*\cJ(\cF) \ar[dl, "{\sst[1]}" description] \ar[ll, "{\sst[1]}" description] \\
& \bj_!\cJ(\cF) \ar[dr] \ar[ul] \\
\bj_!\cJ(\cF)\la -2\ra \ar[ur, "\Nilp"] \ar[rr] && \bj_*\cJ(\cF) \ar[uu]
\end{tikzcd}
\begin{tikzcd}[column sep=tiny, row sep=small]
\bj_!\Mon(\cF) \ar[dd, "{\sst[1]}" description] \ar[dr, "\hat\alpha_-"'] && \bi_*\bi^*\bj_*\cJ(\cF) \ar[ll, "{\sst[1]}" description] \\
& \tilde\Xi_f(\cF) \ar[dl, "{\sst[1]}" description] \ar[ur, "\hat\beta_-"'] \\
\bj_!\cJ(\cF)\la -2\ra \ar[rr] && \bj_*\cJ(\cF) \ar[uu] \ar[ul]
\end{tikzcd}
\hspace{-1em}
\end{equation}
Apply $\Mon^{-1}\bi^!$ to the diagram above. The resulting octahedron can be rewritten as shown below, using the definition of $\Psi_f$ along with Lemmas~\ref{lem:psi-formula} and~\ref{lem:maxext-pre}:
\begin{equation}\label{eqn:max-oct2}
\begin{tikzcd}[column sep=tiny, row sep=small]
\bi^!\bj_!\cF \ar[dd] && \Psi_f(\cF)\la 2\ra \ar[dl, "\id"'] \ar[ll, "{\sst[1]}" description] \\
& \Psi_f(\cF)\la 2\ra \ar[dr] \ar[ul, "{\sst[1]}" description] \\
\Psi_f(\cF) \ar[ur, "\Nilp"] \ar[rr] && 0 \ar[uu, "{\sst[1]}" description]
\end{tikzcd}
\quad
\begin{tikzcd}[column sep=tiny, row sep=small]
\bi^!\bj_!\cF \ar[dd] \ar[dr] && \Psi_f(\cF)\la 2\ra \ar[ll, "{\sst[1]}" description] \\
& \Psi_f(\cF) \ar[dl, "\id"'] \ar[ur, "\bi^!\beta_-"'] \\
\Psi_f(\cF) \ar[rr] && 0 \ar[uu, "{\sst[1]}" description] \ar[ul, "{\sst[1]}" description]
\end{tikzcd}
\end{equation}
Let us spell out these identifications we have made above in a bit more detail.  The left-hand distinguished triangle in the upper cap of~\eqref{eqn:max-oct2} comes from Lemma~\ref{lem:psi-sr-dt}.  To see that the map labelled ``$\id$'' in the upper cap is indeed the identity map, observe that the proof of Lemma~\ref{lem:psi-formula} involves the corresponding distinguished triangle in~\eqref{eqn:max-oct1}.  Similarly, the arrow labelled ``$\id$'' in the lower cap of~\eqref{eqn:max-oct2} comes from the proof of Lemma~\ref{lem:maxext-pre}.  The fact that $\bi^!\beta_- = \Nilp$ comes from the equality of the two paths from the center of the lower cap to the center of the upper cap in~\eqref{eqn:max-oct2}.

Finally, if Hypothesis~\ref{hyp:j-exact} holds, the last assertion in the proposition is clear.
\end{proof}

\begin{lem}\label{lem:xi-dual}
For $\cF \in \Perv_\Gm(\cX_\gen,\bk)$, there is a natural isomorphism $\D\Xi_f(\cF) \cong \Xi_f(\D\cF)$.  Moreover, $\D$ exchanges the two natural distinguished triangles in Proposition~\ref{prop:maxext}.
\end{lem}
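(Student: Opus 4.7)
The plan is to apply Verdier duality to the defining distinguished triangle~\eqref{eqn:txi-defn} of $\tilde\Xi_f(\cF)$ and identify the result with the defining triangle of $\tilde\Xi_f(\D\cF)$. Applying $\D$ to~\eqref{eqn:txi-defn} and using $\D\bj_! \cong \bj_*\D$, $\D\bj_* \cong \bj_!\D$, together with the isomorphism $\D\cJ(\cF) \cong \cJ(\D\cF)\la -2\ra[1]$ coming from Proposition~\ref{prop:jordan}, produces, after suitable rotation and simplification of Tate twists, a distinguished triangle of the form
$$\bj_!\cJ(\D\cF)\la -2\ra \xrightarrow{g} \bj_*\cJ(\D\cF) \to \D\tilde\Xi_f(\cF) \to.$$

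The key verification will then be that the map $g$ coincides, under these identifications, with $\Nilp$. This reduces to checking the naturality statement $\D(\Nilp_\cG) = \Nilp_{\D\cG}$ in $\Dmix_\mon$, which is immediate from the description of $\Nilp$ as multiplication by the central element $\sr \in R^\vee$ and from the explicit form of $\D$ on $\Dmix_\mon$ (which acts trivially on the $R^\vee$-factor beyond the prescribed twist and shift). Once this is in hand, uniqueness of the cone---by the same argument used in the proof of Lemma~\ref{lem:maxext-pre} to establish functoriality of $\tilde\Xi_f$---yields a natural isomorphism $\D\tilde\Xi_f(\cF) \cong \tilde\Xi_f(\D\cF)$. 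Applying $\Mon^{-1}$ and using that $\Mon$ commutes with $\D$ then produces the desired isomorphism $\D\Xi_f(\cF) \cong \Xi_f(\D\cF)$.

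For the second assertion, I will recall from the proof of Proposition~\ref{prop:maxext} that the two distinguished triangles are obtained by applying $\Mon^{-1}$ to the two recollement triangles
$$\bj_!\bj^*\tilde\Xi_f(\cF) \to \tilde\Xi_f(\cF) \to \bi_*\bi^*\tilde\Xi_f(\cF) \to$$
and
$$\bi_*\bi^!\tilde\Xi_f(\cF) \to \tilde\Xi_f(\cF) \to \bj_*\bj^*\tilde\Xi_f(\cF) \to$$
and then inserting the identifications from Lemma~\ref{lem:maxext-pre}. Since Verdier duality exchanges $\bj_!$ with $\bj_*$ and $\bi^*$ with $\bi^!$, and preserves $\bj^*$, these two recollement triangles for $\tilde\Xi_f(\cF)$ are carried to each other for $\D\tilde\Xi_f(\cF) \cong \tilde\Xi_f(\D\cF)$. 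Combined with the first part---and taking into account the Tate twist in Proposition~\ref{prop:nearby-verdier} that relates $\D\Psi_f(\cF)$ with $\Psi_f(\D\cF)\la 2\ra$---this gives the claimed exchange of the two triangles of Proposition~\ref{prop:maxext}.

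The main obstacle I anticipate is bookkeeping rather than anything conceptual: carefully matching $\D\Nilp$ with $\Nilp$ requires tracking the shift $[1]$ and twist $\la -2\ra$ arising from Proposition~\ref{prop:jordan} against the shift and twist in the definition of $\D$ on $\Dmix_\mon$, and likewise the identification of the two triangles of Proposition~\ref{prop:maxext} under $\D$ requires matching the Tate twist from Proposition~\ref{prop:nearby-verdier} with the $\la 2\ra$ appearing in the first triangle. Once this bookkeeping is carried out, no additional input beyond the facts already established in the paper is required.
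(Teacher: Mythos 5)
Your proposal is correct and follows essentially the same route as the paper: apply $\D$ to the defining triangle~\eqref{eqn:txi-defn}, use Proposition~\ref{prop:jordan} to identify the dualized triangle (after rotation) with the defining triangle for $\tilde\Xi_f(\D\cF)$, get naturality by the argument of Lemma~\ref{lem:maxext-pre}, and deduce the exchange of the two triangles of Proposition~\ref{prop:maxext} from the fact that $\D$ swaps the two recollement triangles, with the Tate-twist bookkeeping as in Proposition~\ref{prop:nearby-verdier}. The paper leaves the last step as ``easily follows''; your write-up just makes that step explicit.
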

\begin{proof}
By Proposition~\ref{prop:jordan}, applying $\D$ to~\eqref{eqn:txi-defn} yields the distinguished triangle
\[
\D\tilde\Xi_f(\cF) \to \bj_!\cJ(\D\cF)\la -2\ra[1] \xrightarrow{\Nilp} \bj_*\cJ(\D\cF)[1] \to.
\]
We therefore have $\D\tilde\Xi_f(\cF) \cong \tilde\Xi_f(\D\cF)$.  The reasoning in Lemma~\ref{lem:maxext-pre} shows that this isomorphism is natural.  The present lemma easily follows from this.
\end{proof}

\subsection{The vanishing cycles functor}

In this subsection, we assume Hypothesis~\ref{hyp:j-exact} throughout.

Recall from Proposition~\ref{prop:eqvt-ff} that the category $\Perv_\Gm(\cX_\gen,\bk)$ can be identified with a full subcategory of $\Perv(\cX_\gen,\bk)$.  Consider the category of ``generically $\Gm$-equivariant perverse sheaves,'' defined by
\[
\Perv_{\Gm,\gen}(\cX,\bk) = \{ \cF \in \Perv(\cX,\bk) \mid \text{$\bj^*\cF$ lies in $\Perv_\Gm(\cX_\gen,\bk)$} \}.
\]
This is a full abelian subcategory of $\Perv(\cX,\bk)$ (but it is not, in general, a Serre subcategory).

Given $\cF \in \Perv_{\Gm,\gen}(\cX,\bk)$, consider the natural maps
\[
\gamma_-: \bj_!(\cF|_{\cX_\gen}) \to \cF
\qquad\text{and}\qquad
\gamma_+: \cF \to \bj_*(\cF|_{\cX_\gen}).
\]
Consider also the maps $\alpha_-: \bj_!(\cF|_{\cX_\gen}) \to \Xi_f(\cF|_{\cX_\gen})$ and $\alpha_+: \Xi_f(\cF|_{\cX_\gen}) \to  \bj_*(\cF|_{\cX_\gen})$ from Proposition~\ref{prop:maxext}.  These maps satisfy $\alpha_+ \circ \alpha_- = \gamma_+ \circ \gamma_-$.  Therefore, the diagram
\[
\tilde\Phi_f(\cF) = \left(
\bj_!(\cF|_{\cX_\gen})
\xrightarrow{[\begin{smallmatrix} \alpha_- \\ \gamma_- \end{smallmatrix}]}
(\Xi_f(\cF|_{\cX_\gen}) \oplus \cF)
\xrightarrow{[\begin{smallmatrix} \alpha_+ & -\gamma_+ \end{smallmatrix}]}
\bj_*(\cF|_{\cX_\gen})
\right)
\]
can be regarded as a chain complex of perverse sheaves concentrated in degrees $-1$, $0$, and $1$.  The terms in this chain complex are all exact functors of $\cF$, so the assignment $\cF \mapsto \tilde\Phi_f(\cF)$ is an exact functor from $\Perv_{\Gm,\gen}(\cX,\bk)$ to the abelian category of chain complexes over $\Perv_{\Gm,\gen}(\cX,\bk)$.   

Moreover, since $\alpha_-$ is injective and $\alpha_+$ is surjective, the chain complex $\tilde\Phi_f(\cF)$ has cohomology only in degree $0$.  The functor $\cF \mapsto \coh^0(\tilde\Phi_f(\cF))$ is an exact functor $\Perv_{\Gm,\gen}(\cX,\bk) \to \Perv_{\Gm,\gen}(\cX,\bk)$.

Finally, note that $\bj^*\tilde\Phi_f(\cF)$ can be identified with
\[
\cF|_{\cX_\gen} 
\xrightarrow{[\begin{smallmatrix} \id \\ \id  \end{smallmatrix}]}
(\cF|_{\cX_\gen} \oplus \cF|_{\cX_\gen})
\xrightarrow{[\begin{smallmatrix} \id & -\id \end{smallmatrix}]}
\cF|_{\cX_\gen}
\]
This chain complex is acyclic.  In other words, $\bj^*\coh^0(\tilde\Phi_f(\cF)) = 0$, so $\tilde\Phi_f(\cF)$ is supported on $\cX_0$.

We define the \emph{vanishing cycles functor}
\[
\Phi_f: \Perv_{\Gm,\gen}(\cX,\bk) \to \Perv(\cX_0,\bk)
\qquad\text{by}\qquad
\Phi_f(\cF) = \bi^* \coh^0(\tilde\Phi_f(\cF)).
\]

Next, consider the following diagram:
\begin{equation}\label{eqn:canvar-defn}
\begin{tikzcd}[column sep=large,ampersand replacement=\&]
\& \bj_*(\cF|_{\cX_\gen}) \\
\bi_*\Psi_f(\cF|_{\cX_\gen}) \ar[r, "{[\begin{smallmatrix} \beta_+ \\ 0 \end{smallmatrix}]}"] \&
  \Xi_f(\cF|_{\cX_\gen}) \oplus \cF
    \ar[r, "{[\begin{smallmatrix} \beta_- & 0 \end{smallmatrix}]}"] 
    \ar[u, "{[\begin{smallmatrix} \alpha_+ & -\gamma_+ \end{smallmatrix}]}"]\&
\bi_*\Psi_f(\cF|_{\cX_\gen})\la 2\ra
\\
\& \bj_!(\cF|_{\cX_\gen}) \ar[u, "{[\begin{smallmatrix} \alpha_- \\ \gamma_- \end{smallmatrix}]}"]
\end{tikzcd}
\end{equation}
If we regard each column as a chain complex of perverse sheaves (with the first and third columns concentrated in degree $0$), then the horizontal maps are chain maps. Let us take their induced maps in cohomology: we define
\begin{align*}
\can&: \Psi_f(\cF|_{\cX_\gen}) \to \Phi_f(\cF),
&
\can &= \bi^*\coh^0([\begin{smallmatrix} \beta_+ \\ 0 \end{smallmatrix}]), \\
\var&: \Phi_f(\cF) \to \Psi_f(\cF|_{\cX_\gen})\la 2\ra,
&
\var &= \bi^*\coh^0([\begin{smallmatrix} \beta_- & 0 \end{smallmatrix}])
\end{align*}

\begin{prop}\label{prop:vancyc}
Assume that Hypothesis~\ref{hyp:j-exact} holds.  The vanishing cycles functor $\Phi_f: \Perv_{\Gm,\gen}(\cX,\bk) \to \Perv(\cX_0,\bk)$ is exact, and the natural transformations $\can: \Psi_f \to \Phi_f$ and $\var: \Phi_f \to \Psi_f\la 2\ra$ satisfy $\var \circ \can = \Nilp$.  Moreover, for any $\cF \in \Perv_{\Gm,\gen}(\cX,\bk)$ there are natural distinguished triangles
\begin{gather*}
\Psi_f(\cF|_{\cX_\gen}) \xrightarrow{\can} \Phi_f(\cF) \to \bi^*\cF \to, \\
\bi^!\cF \to \Phi_f(\cF) \xrightarrow{\var} \Psi_f(\cF|_{\cX_\gen})\la 2\ra \to.
\end{gather*}
\end{prop}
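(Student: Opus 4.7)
The plan is to follow the approach of~\cite{beilinson, reich}, exploiting the fact that $\tilde\Phi_f(\cF)$ is a chain complex of perverse sheaves with cohomology concentrated in degree $0$, equal to $\bi_*\Phi_f(\cF)$. The proof proceeds in four steps.

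\textit{Step 1: exactness.} Each of the three terms of $\tilde\Phi_f(\cF)$ is an exact functor of $\cF \in \Perv_{\Gm,\gen}(\cX,\bk)$: this follows from Hypothesis~\ref{hyp:j-exact} for $\bj_!$ and $\bj_*$, and from Proposition~\ref{prop:maxext} for $\Xi_f$. Hence $\tilde\Phi_f$ is exact as a functor into $\Kb\Perv_{\Gm,\gen}(\cX,\bk)$, and since each $\tilde\Phi_f(\cF)$ has cohomology only in degree $0$, a short exact sequence of perverse sheaves induces a short exact sequence on $H^0$. Applying $\bi^*$, which is exact on perverse sheaves supported on $\cX_0$, gives exactness of $\Phi_f$.

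\textit{Step 2: the first distinguished triangle.} I would exhibit $\tilde\Phi_f(\cF)$ as an extension of two simpler 2-term complexes: let $\mathcal{N}^-$ be the subcomplex $\Xi_f(\cF|_{\cX_\gen}) \xrightarrow{\alpha_+} \bj_*(\cF|_{\cX_\gen})$ concentrated in degrees $0, 1$, where the degree-$0$ term sits inside $\Xi_f(\cF|_{\cX_\gen}) \oplus \cF$ via the first summand inclusion, and let $\mathcal{N}^+$ be the quotient complex $\bj_!(\cF|_{\cX_\gen}) \xrightarrow{\gamma_-} \cF$ concentrated in degrees $-1, 0$. A direct check confirms that $0 \to \mathcal{N}^- \to \tilde\Phi_f(\cF) \to \mathcal{N}^+ \to 0$ is a degreewise split short exact sequence of chain complexes of perverse sheaves. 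Passing through the realization functor $\Kb\Perv \to \Dmix(\cX,\bk)$: the complex $\mathcal{N}^-$ has cohomology only in degree $0$ (since $\alpha_+$ is surjective by Hypothesis~\ref{hyp:j-exact}) and becomes $\bi_*\Psi_f(\cF|_{\cX_\gen})$ by Proposition~\ref{prop:maxext}; the complex $\mathcal{N}^+$ becomes $\cone(\gamma_-) = \bi_*\bi^*\cF$; and $\tilde\Phi_f(\cF)$ becomes $\bi_*\Phi_f(\cF)$. The resulting distinguished triangle, after applying $\bi^*$, is the desired one. Comparison with~\eqref{eqn:canvar-defn} identifies the first map as $\can$: the identification $\mathcal{N}^- \cong \bi_*\Psi_f(\cF|_{\cX_\gen})$ is by $\beta_+$, and the degree-$0$ inclusion $\Xi_f \hookrightarrow \Xi_f \oplus \cF$ is precisely the map $[\beta_+; 0]$ of~\eqref{eqn:canvar-defn}.

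\textit{Step 3: the identity $\var \circ \can = \Nilp$, and the second triangle.} The equality $\var \circ \can = \Nilp$ follows directly from~\eqref{eqn:canvar-defn}: the two horizontal chain maps compose in degree $0$ to $[\beta_-\;0] \circ [\beta_+; 0] = \beta_- \circ \beta_+$, which equals $\Nilp$ by Proposition~\ref{prop:maxext}. Passing to $H^0$ and $\bi^*$ gives the claim. For the second distinguished triangle I would invoke Verdier duality: Lemma~\ref{lem:xi-dual} together with $\D\bj_! \cong \bj_*\D$ and the fact that restriction to $\cX_\gen$ commutes with $\D$ produces a natural isomorphism $\D\tilde\Phi_f(\cF) \cong \tilde\Phi_f(\D\cF)$, whence $\D\Phi_f(\cF) \cong \Phi_f(\D\cF)$. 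Applying $\D$ to the first triangle for $\D\cF$, and using Proposition~\ref{prop:nearby-verdier} and $\D\bi^* \cong \bi^!\D$, yields exactly the second triangle for $\cF$, with $\D\can$ playing the role of $\var$.

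\textit{Expected main obstacle.} The most delicate point is the identification, in Step~2, of the connecting map $\bi_*\Psi_f(\cF|_{\cX_\gen}) \to \bi_*\Phi_f(\cF)$ arising from the short exact sequence with the map $\bi_*\can$ defined via diagram~\eqref{eqn:canvar-defn}. This is not conceptually difficult, but requires careful bookkeeping to verify that the chosen quasi-isomorphism $\mathcal{N}^- \simeq \bi_*\Psi_f(\cF|_{\cX_\gen})$, composed with the degree-$0$ inclusion $\mathcal{N}^- \hookrightarrow \tilde\Phi_f(\cF)$, reproduces precisely the arrow $[\beta_+; 0]$ of~\eqref{eqn:canvar-defn}; an analogous check is needed for $\var$ in the dual triangle.
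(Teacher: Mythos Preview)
Your proof is correct and follows essentially the same line as the paper's: both decompose $\tilde\Phi_f(\cF)$ via the short exact sequence of complexes with subcomplex $\Xi_f(\cF|_{\cX_\gen}) \xrightarrow{\alpha_+} \bj_*(\cF|_{\cX_\gen})$ and quotient $\bj_!(\cF|_{\cX_\gen}) \xrightarrow{\gamma_-} \cF$, identify the outer terms as $\bi_*\Psi_f$ and $\bi_*\bi^*\cF$ via Proposition~\ref{prop:maxext} and recollement, and pass through a realization functor; both deduce $\var\circ\can=\Nilp$ directly from $\beta_-\circ\beta_+=\Nilp$. The one genuine difference is that the paper obtains the second triangle by a parallel direct construction (the dual short exact sequence of complexes), whereas you derive it from the first by Verdier duality, effectively folding the content of Lemma~\ref{lem:phi-dual} into the proof; your route is slightly more economical but requires the extra check that $\D(\can)$ is identified with $\var$ under the isomorphism $\D\Phi_f(\cF)\cong\Phi_f(\D\cF)$, which you correctly flag as the delicate point.
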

\begin{proof}
The fact that $\var \circ \can = \Nilp$ follows from the fact that $\beta_- \circ \beta_+ = \Nilp$ (see Proposition~\ref{prop:maxext}).  Next, consider the following diagram, in which the columns are chain complexes of perverse sheaves, and the horizontal maps are chain maps:
\begin{equation}\label{eqn:can-dt}
\begin{tikzcd}[column sep=large,ampersand replacement=\&]
\bj_*(\cF|_{\cX_\gen}) \ar[r, "\id"] \& \bj_*(\cF|_{\cX_\gen}) \\
\Xi_f(\cF|_{\cX_\gen}) \ar[r, "{[\begin{smallmatrix} \id \\ 0 \end{smallmatrix}]}"]
  \ar[u, "\alpha_+"] \&
  \Xi_f(\cF|_{\cX_\gen}) \oplus \cF
    \ar[r, "{[\begin{smallmatrix} 0 & \id \end{smallmatrix}]}"] 
    \ar[u, "{[\begin{smallmatrix} \alpha_+ & -\gamma_+ \end{smallmatrix}]}"]\&
\cF
\\
\& \bj_!(\cF|_{\cX_\gen}) \ar[u, "{[\begin{smallmatrix} \alpha_- \\ \gamma_- \end{smallmatrix}]}"] \ar[r, "\id"] \&
  \bj_!(\cF|_{\cX_\gen}) \ar[u, "\gamma_-"] 
\end{tikzcd}
\end{equation}
This is a natural short exact sequence of chain complexes, so it determines a natural distinguished triangle in the derived category $\Db\Perv(\cX,\bk)$.  Via a ``realization'' functor in the sense of~\cite[\S3.1]{bbd} or~\cite{bei:dcps}, it determines a natural distinguished triangle in $\Dmix(\cX,\bk)$. 

Moreover, by Proposition~\ref{prop:maxext}, the first column of~\eqref{eqn:can-dt} is canonically isomorphic to $\bi_*\Psi_f(\cF|_{\cX_\gen})$.  The last column is canonically isomorphic to $\bi_*\bi^*\cF$.  This diagram thus gives rise to the first distinguished triangle in the statement of the proposition.  The construction of the second distinguished triangle is very similar.
\end{proof}

\begin{lem}\label{lem:phi-dual}
For any object $\cF \in \Perv_{\Gm,\gen}(\cX,\bk)$, there is a natural isomorphism $\D\Phi_f(\cF) \cong \Phi_f(\D\cF)$.  Moreover, $\D$ exchanges the two natural distinguished triangles in Proposition~\ref{prop:vancyc}.
\end{lem}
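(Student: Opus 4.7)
The plan is to show first that the natural isomorphism lives at the level of the three-term chain complex $\tilde\Phi_f(\cF)$, and then descend everything to the derived category. The second statement about exchanging triangles will follow by dualizing the explicit short exact sequences of chain complexes that produced them.

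For the isomorphism $\D\Phi_f(\cF) \cong \Phi_f(\D\cF)$, I would apply $\D$ term by term to the three-term complex
\[
\tilde\Phi_f(\cF) = \bigl(\bj_!(\cF|_{\cX_\gen}) \to \Xi_f(\cF|_{\cX_\gen}) \oplus \cF \to \bj_*(\cF|_{\cX_\gen})\bigr),
\]
concentrated in degrees $-1,0,1$. Using $\D\bj_! \cong \bj_*\D$, $\D\bj_* \cong \bj_!\D$, and Lemma~\ref{lem:xi-dual}, the reversed-and-dualized complex is naturally isomorphic to $\tilde\Phi_f(\D\cF)$, provided that under these identifications, $\D(\alpha_\pm) = \alpha_\mp$ and $\D(\gamma_\pm) = \gamma_\mp$ for $\D\cF$. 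The statement for $\alpha_\pm$ is precisely the content of the second assertion of Lemma~\ref{lem:xi-dual}; the statement for $\gamma_\pm$ is just the naturality of the adjunction morphisms $\bj_! \to \id \to \bj_*$ under Verdier duality.

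Since $\tilde\Phi_f(\cF)$ has cohomology only in degree $0$, and Verdier duality is an exact functor on $\Perv$, we obtain
\[
\D\bigl(\coh^0(\tilde\Phi_f(\cF))\bigr) \cong \coh^0\bigl(\D\tilde\Phi_f(\cF)\bigr) \cong \coh^0\bigl(\tilde\Phi_f(\D\cF)\bigr).
\]
Both sides are perverse sheaves supported on $\cX_0$, so applying $\bi_*\bi^* \cong \id$ on this subcategory yields $\bi_*\D\Phi_f(\cF) \cong \bi_*\Phi_f(\D\cF)$, and hence $\D\Phi_f(\cF) \cong \Phi_f(\D\cF)$ after applying $\bi^*$.

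For the exchange of the two triangles in Proposition~\ref{prop:vancyc}, I would apply $\D$ to the short exact sequence of chain complexes~\eqref{eqn:can-dt}. Under duality, the first column $\bi_*\Psi_f(\cF|_{\cX_\gen})$ is sent to $\bi_*\Psi_f(\D\cF|_{\cX_\gen})\la 2\ra$ by Proposition~\ref{prop:nearby-verdier}; the middle column, by the analysis above, is sent to the middle column for $\D\cF$; and the third column $\cF$ is sent to $\D\cF$. The horizontal chain maps, which are built from $\alpha_\pm$, $\gamma_\pm$, and $\beta_\pm$, are exchanged with their opposites under $\D$ by Lemma~\ref{lem:xi-dual} and the analogous statement for $\beta_\pm$ (also contained in Lemma~\ref{lem:xi-dual}). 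Hence $\D$ carries diagram~\eqref{eqn:can-dt} for $\cF$ to the analogous diagram for $\D\cF$ that produces the second distinguished triangle of Proposition~\ref{prop:vancyc}, and vice versa.

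The main obstacle I expect is bookkeeping: tracking how the isomorphism of Lemma~\ref{lem:xi-dual} interacts with the component maps $\alpha_\pm, \beta_\pm$ and, via the realization functor $\Db\Perv \to \Dmix$, with the connecting morphisms of the two triangles. In particular, the assertion that $\D(\can) = \var$ for $\D\cF$ (up to the Tate twist) needs to be read off from the dualized version of~\eqref{eqn:can-dt}, which requires knowing that $\D$ exchanges $\beta_+$ and $\beta_-$ — this is exactly the content packaged into the last sentence of Lemma~\ref{lem:xi-dual}, so once invoked carefully, the argument reduces to a diagram-chase with no remaining conceptual content.
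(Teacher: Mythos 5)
Your argument is correct and is essentially the paper's own proof, just written out in more detail: the paper likewise deduces from Lemma~\ref{lem:xi-dual} (together with duality for the adjunction maps $\gamma_\pm$) that $\tilde\Phi_f(\cF)$ dualizes to $\tilde\Phi_f(\D\cF)$, takes $\coh^0$ using exactness of $\D$ on perverse sheaves, and then checks that the dual of diagram~\eqref{eqn:can-dt} is the diagram defining the second triangle of Proposition~\ref{prop:vancyc}. The bookkeeping you flag (that $\D$ exchanges $\alpha_\pm$ and $\beta_\pm$, hence $\can$ and $\var$ up to the Tate twist from Proposition~\ref{prop:nearby-verdier}) is exactly what the paper packages into Lemma~\ref{lem:xi-dual} and leaves to the reader.
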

\begin{proof}
By Lemma~\ref{lem:xi-dual}, the chain complex $\tilde\Phi_f(\cF)$ is self-dual under Verdier duality, so $\D\Phi_f(\cF) \cong \Phi_f(\D\cF)$.  Then, one checks that the dual of the diagram~\eqref{eqn:can-dt} is naturally isomorphic to the diagram used to define the second distinguished triangle in Proposition~\ref{prop:vancyc}.
\end{proof}

\section{Examples}
\label{sec:examples}

We conclude the paper with a couple of examples. See~\cite{arider} for full details and further generalizations.

\subsection{The identity map}

As in Example~\ref{ex:gm-der}, let $X = \bA^1$, stratified as the union of $X_0 = \{0\}$ and $X_1 = \bA^1 \smallsetminus \{0\}$.  Let $f: \cX \to \bA^1$ be the identity map.  In this case, the nearby cycles functor
\[
\Psi_f: \Dmix_\Gm(\bA^1 \smallsetminus \{0\}, \bk) \to \Dmix(\pt,\bk)
\]
is an equivalence of categories.  It is $t$-exact, but it does not preserve weights: we have
\[
\Psi_f(\ubk\{1\}) \cong \ubk_\pt\la -1\ra.
\]
To see this, one can first check that $\bj_!\cJ(\ubk\{1\})$ is the object described in Example~\ref{ex:mon-der1}.  It follows that
\[
\bi^!\bj_!\cJ(\ubk\{1\})\la -2\ra[1] \cong 
\left(\begin{tikzcd}[column sep=large]
\ubk_\pt\la -1\ra \ar[r, shift left=1.5, "{\sst[1]}" description, "\xi\cdot \id" above=2] &
\ubk_\pt\la -3\ra[1] \ar[l, shift left=1.5, "{\sst[1]}" description, "\sr\cdot \id" below=2]
\end{tikzcd}\right)
\cong \Mon(\ubk_\pt\la -1\ra).
\]
For any $\cF \in \Dmix_\Gm(\bA^1 \smallsetminus \{0\},\bk)$, the endomorphism $\Nilp: \Psi_f(\cF) \to \Psi_f(\cF)\la 2\ra$ is zero.

The maximal extension of the constant sheaf is given by
\[
\Xi_f(\ubk_{\bA^1 \smallsetminus \{0\}}\{1\}) =
\begin{tikzcd}
\ubk_\pt\la 1\ra \ar[r, "{\sst[1]}" description, "\eta" below=2]
  \ar[rr, bend left=20, "{\sst[1]}" description, "-\bxi \cdot \id" above=2] &
  \ubk_{\bA^1}\{1\} \ar[r, "{\sst[1]}" description, "\epsilon" below=2] &
  \ubk_\pt\la -1\ra
\end{tikzcd}
\]
where $\eta$ and $\epsilon$ are the adjunction maps.  This object is a ``tilting perverse sheaf'' in the sense of~\cite{bbm}.  See~\cite[Remark~1.1(ii)]{bbm}, and see~\cite[Example~4.6.4]{amrw1} for a related object in the context of flag varieties.

\subsection{The product map on \texorpdfstring{$\bA^2$}{A2}}

Let $X = \bA^2$.  Let $H = \Gm$, and let $H$ act on $X$ by $z\cdot(x_1,x_2) = (zx_1, z^{-1}x_2)$.  We also take a second copy of $\Gm$ and let it act on $X$ by $z \cdot(x_1,x_2) = (x_1,zx_2)$.  Stratify $X$ by the orbits of the $\Gm \times H$-action.  There are four strata, which we denote by
\begin{gather*}
\pt = \{(0,0)\},
\qquad
X_1 = (\bA^1 \smallsetminus \{0\}) \times \{0\},
\qquad
X_2 = \{0\} \times (\bA^1 \smallsetminus \{0\}),
\\
U = (\bA^1 \smallsetminus \{0\}) \times (\bA^1 \smallsetminus \{0\}).
\end{gather*}
Next, let $f: X \to \bA^1$ be the map $f(x_1,x_2) = x_1x_2$.  This map is $\Gm \times H$-equivariant, where $H$ acts trivially on $\bA^1$.  Note that $X_\gen = U$, and $X_0 = f^{-1}(0)$ is the union of the other three strata.  Again, $\Psi_f$ is $t$-exact.  It can be shown that
\[
\Psi_f(\ubk\{2\}) \cong
\begin{tikzcd}[column sep=large, ampersand replacement=\&]
\ubk_\pt \ar[r, "{\sst[1]}" description, "{[\begin{smallmatrix} \eta_1 \\ -\eta_2\end{smallmatrix}]}" below=2] 
  \ar[rr, bend left=20, "{\sst[1]}" description, "-\bxi \cdot \id" above=2] \&
(\ubk_{X_1}\{1\} \oplus \ubk_{X_2}\{1\})\la -1\ra
\ar[r, "{\sst[1]}" description, "{[\begin{smallmatrix} \epsilon_1 & -\epsilon_2 \end{smallmatrix}]}" below=2] \&
\ubk_\pt\la -2\ra
\end{tikzcd}
\]
Here, the $\eta_i$ and $\epsilon_i$ are appropriate unit and counit maps.  They satisfy $\epsilon_1\eta_1 + \epsilon_2\eta_2 = \xi \in \Hom(\ubk_\pt, \ubk_\pt\{2\}) = \coh^2_{\Gm \times H}(\pt,\bk)$.  The monodromy endomorphism can be read off this picture using Proposition~\ref{prop:constr-monodromy}.  This object is closely related to the object described in~\cite[\S1.2.3]{gaitsgory}.  See~\cite{arider} for further details.


\end{document}